\documentclass[12pt,letterpaper]{amsart}
\usepackage{euler, epic,eepic,latexsym, amssymb, amscd, amsfonts, xypic, url, color, epsfig, hyperref, comment}
\usepackage{enumitem}
\setlist[enumerate,1]{label={(\arabic*)}}
\setlist[enumerate,2]{label={(\alph*)}}
\input xy
\xyoption{all}
 \newlength{\baseunit}               
 \newcount{\numlines}                
 \setlength{\baseunit}{0.05ex}

\setlength{\oddsidemargin}{0cm} \setlength{\evensidemargin}{0cm}
\setlength{\marginparwidth}{0in}
\setlength{\marginparsep}{0in}
\setlength{\marginparpush}{0in}
\setlength{\topmargin}{0in}
\setlength{\headheight}{0pt}
\setlength{\headsep}{0pt}
\setlength{\footskip}{.3in}
\setlength{\textheight}{9.2in}
\setlength{\textwidth}{6.5in}
\setlength{\parskip}{4pt}

\newtheorem{thmintro}{Theorem}

\newtheorem{tm}{Theorem}[section]
\newtheorem{pr}[tm]{Proposition}
\newtheorem{lm}[tm]{Lemma}
\newtheorem{co}[tm]{Corollary}
\newtheorem{constrn}[tm]{Construction}

\theoremstyle{definition}
\newtheorem{df}[tm]{Definition}
\newtheorem{hyp}[thmintro]{Hypothesis}
\newtheorem{dfintro}[thmintro]{Definition}

\theoremstyle{remark}
\newtheorem{rmk}[tm]{Remark}
\newtheorem{ex}[tm]{Example}

\newtheorem{setting}[tm]{Setting}

\newtheorem{exintro}[thmintro]{Example}


\newcommand{\cat}[1]{{\mathbf{#1}}}


\newcommand{\Nodes}{\operatorname{Nodes}}

\newcommand{\Aone}{{\mathbb{A}^{\!1}}}

\newcommand{\GW}{\mathrm{GW}}
\newcommand{\W}{\mathrm{W}}

\newcommand{\Laone}{{\mathrm{L}_{\Aone}}}

\newcommand{\Lnis}{\mathrm{L}_{\textrm{Nis}}}

\newcommand{\sGW}{\mathcal{GW}}

\newcommand{\dpl}{{\calD}}





\newcommand{\PP}{\mathbb{P}}
\newcommand{\Z}{\mathbb{Z}}

\newcommand{\kbar}{\overline{k}}

\newcommand{\R}{\mathbb{R}}

\newcommand{\C}{\mathbb{C}}

\newcommand{\Map}{\operatorname{Map}} 
\newcommand{\Hom}{\operatorname{Hom}} 

\newcommand{\Fun}{\operatorname{Fun}}  
\newcommand{\Aut}{\operatorname{Aut}} 

\newcommand{\Gal}{\operatorname{Gal}}

\newcommand{\Sym}{\operatorname{Sym}}

\newcommand{\Set}{\cat{Set}}
\newcommand{\Ab}{\cat{Ab}}

\newcommand{\Sch}{\cat{Sch}}

\newcommand{\Sm}{\cat{Sm}}




\newcommand{\bir}{{\operatorname{bir}}}
\newcommand{\odp}{{\operatorname{odp}}}
\newcommand{\unr}{{\operatorname{unr}}}
\newcommand{\Deg}{\text{deg}}
\newcommand{\good}{{\operatorname{good}}}

\newcommand\bbb[1]{\ensuremath{{\mathbb{#1}}}}

\newcommand{\A}{\mathbb{A}}
\newcommand{\Tr}{\operatorname{Tr}}
\newcommand{\ind}{\operatorname{ind}}

\newcommand{\cusp}{\mathrm{cusp}}
\newcommand{\tac}{\mathrm{tac}}

\newcommand{\Spec}{\operatorname{Spec}}

\newcommand{\Bl}{\operatorname{Bl}}

\newcommand{\mass}{\operatorname{mass}}
\newcommand{\Hess}{\operatorname{Hess}}
\newcommand{\Res}{\operatorname{Res}}
\newcommand{\disc}{\operatorname{disc}}

\newcommand{\rank}{\operatorname{Rank}}
\DeclareMathOperator{\Char}{char}

\newcommand{\op}{\operatorname{op}}

\DeclareMathOperator{\codim}{codim}

\newcommand{\M}{\bar{M}}
\DeclareMathOperator{\ev}{ev}
\newcommand{\Tev}{T\!\ev}
\DeclareMathOperator{\evb}{evb}
\DeclareMathOperator{\evi}{evi}
\newcommand{\po}{pt}
\DeclareMathOperator{\Int}{int}


\newcommand{\cO}{\mathcal{O}}

\newcommand{\LiftDeg}{\widetilde{D}}
\newcommand{\Liftpi}{\widetilde{\pi}}
\newcommand{\LiftS}{\widetilde{S}}
\newcommand{\LiftA}{\widetilde{A}}
\newcommand{\Liftev}{\widetilde{\ev}}
\newcommand{\Liftdpl}{{\widetilde{\calD}}}
\newcommand{\Liftsig}{\widetilde{\sigma}}
\newcommand{\LiftV}{\widetilde{V}}
\newcommand{\LiftL}{\widetilde{\calL}}
\newcommand{\Pic}{{Pic}}

\newcommand{\rhol}{\mathfrak{r}}


\newcommand{\calD}{{ \mathcal D}}
\newcommand{\calE}{{ \mathcal E}}
\newcommand{\calF}{{ \mathcal F}}

\newcommand{\calI}{{ \mathcal I}}

\newcommand{\calL}{{ \mathcal L}}

\newcommand{\calN}{{ \mathcal N}}
\newcommand{\calO}{{ \mathcal O}}
\newcommand{\calP}{ {\mathcal P} }

\newcommand{\calU}{{ \mathcal U}}

\newcommand{\calX}{{\mathcal X}}
\newcommand{\calY}{{\mathcal Y}}

\newcommand{\mfS}{\mathfrak{S}}

\newcommand{\hidden}[1]{\footnote{Hidden:  #1}}
\renewcommand{\hidden}[1]{}

\newcommand{\norm}[2]{N_{#1/#2}}
\newcommand{\id}{\mathrm{Id}}

\begin{document}

\pagestyle{plain}

\title{A quadratically enriched count of rational curves}

\author{Jesse Leo Kass}

\address{Current: J.~L.~Kass, Dept.~of Mathematics, University of California, Santa Cruz, 1156 High Street, Santa Cruz, CA 95064, United States of America}
\email{jelkass@ucsc.edu}
\urladdr{https://www.math.ucsc.edu/people/faculty.php?uid=jelkass}

\author{Marc Levine}

\address{Current: M.~Levine, University of Duisburg-Essen, Germany}
\email{marc.levine@uni-due.de}
\urladdr{https://www.esaga.uni-due.de/marc.levine/}

\author{Jake P. Solomon}

\address{Current: J.~P. Solomon, Institute of Mathematics, Hebrew University, Givat Ram Jerusalem, 91904, Israel}
\email{jake@math.huji.ac.il}
\urladdr{http://www.ma.huji.ac.il/~jake/}

\author{Kirsten Wickelgren}

\address{Current: K.~Wickelgren, Department of Mathematics, Duke University, 120 Science Drive
Room 117 Physics, Box 90320, Durham, NC 27708-0320, USA}
\email{kirsten.wickelgren@duke.edu}
\urladdr{https://services.math.duke.edu/~kgw/}

\subjclass[2020]{Primary 14N35, 14F42; Secondary 53D45, 19G38.}
\keywords{Gromov--Witten invariants, $\mathbb{A}^1$-homotopy theory, degree}

\date{February 2026}

\begin{abstract}
We define a quadratically enriched count of rational curves in a given divisor class passing through a collection of points on a del Pezzo surface $S$ of degree $\geq 3$ over a perfect field $k$ of characteristic $\neq 2,3.$ When $S$ is $\mathbb{A}^1$-connected, the count takes values in the Grothendieck-Witt group $\GW(k)$ of quadratic forms over $k$ and depends only on the divisor class and the fields of definition of the points. More generally, the count is a section of the Grothendieck-Witt sheaf evaluated on $\pi_0^{\mathbb{A}^1}$ of the restriction of scalars of $S$ corresponding to the fields of definition of the points. We also treat del Pezzo surfaces of degree $2$ under certain conditions. The curve count defined in the present work recovers Gromov-Witten invariants when $k = \mathbb{C}$ and Welschinger invariants when $k = \mathbb{R}.$

To obtain an invariant curve count, we define a quadratically enriched degree for an algebraic map $f$ of $n$-dimensional smooth schemes over a field $k$ under appropriate hypotheses. For example, $f$ can be proper, generically finite and oriented over the complement of a subscheme of codimension $2.$ This degree is compatible with F. Morel's $\GW(k)$-valued degree of an $\A^1$-homotopy class of maps between spheres. For $k \subseteq \mathbb{C}$, this produces an enrichment of the topological degree of a map between manifolds of the same dimension.

\end{abstract}
\maketitle
\tableofcontents

{\parskip=12pt 

\section{Introduction}
\subsection{Background}
A degree $d$ rational plane curve over $\bbb{C}$ is a map $u: \bbb{P}_{\bbb{C}}^1 \to \bbb{P}_{\bbb{C}}^2$ given by $u([s:t]) = [u_0(s,t), u_1(s,t), u_2(s,t)]$ where the $u_i$ are homogeneous polynomials of degree $d$. A dimension count shows that one expects to have finitely many degree $d$ rational plane curves passing through $3d-1$ points. For $d=1$, such curves are the lines through two points. For $d=2$, they are the conics through five. Over $\bbb{C}$, the number of such curves, $N_d$, is independent of the choice of points, provided the points are in general position, and the first values are given by
\[
N_1 = 1, \quad N_2 = 1, \quad N_3 = 12, \quad N_4 = 620, \quad N_5 = 87,304, \quad \ldots
\]
The number $N_4 = 620$ was first computed by Zeuthen~\cite{Zeuthen} in 1873. In the early 1990's, building on ideas of Gromov~\cite{Gromov} and Witten~\cite{Witten88,Witten90}, a general approach to curve counting problems was formulated~\cite{Kontsevich-Manin,McDuff-Salamon,Ruan-Tian94,Ruan-Tian}, which has come to be known as Gromov-Witten theory. An early success of Gromov-Witten theory was a simple recursive formula giving $N_d$ for $d \geq 5$. Another road-mark was the virtual enumeration of rational curves on the quintic threefold in agreement with mirror symmetry~\cite{Kontsevich_Enumeration,Givental,LLY}.

The power of Gromov-Witten theory stems from the topological interpretation of curve counts as intersection numbers. So, even if general position cannot be achieved, one can still make sense of the curve counts. This can be done either through symplectic or algebraic geometry. Here, we focus on the algebraic approach. Let $X$ be a projective algebraic variety over $\C$ of dimension $r$ and let $\bar M_{g,n}(X,\beta)$ be the space of stable maps $u : \Sigma \to X$ where $\Sigma$ is a nodal curve of arithmetic genus $g$ with $n$ marked points $p_1,\ldots,p_n,$ and the degree is $u_*[\Sigma] = \beta \in H_2(X;\Z).$ Let $\ev_i : \bar M_{g,n}(X,\beta) \to X$ be the evaluation map at the $i$th marked point, given by $(u,\Sigma,p) \mapsto u(p_i).$ In general $\bar M_{g,n}(X,\beta)$ is a singular Deligne-Mumford stack and the dimension of irreducible components can vary. However, it admits a virtual fundamental class $[\bar M_{g,n}(X,\beta)]$ of dimension $(1-g)(r-3) + n + \int_\beta c_1(TX).$ The Gromov-Witten invariant counting curves of genus $g$ and degree $\beta$ passing through cycles representing the Poincar\'e duals of $A_i \in H^{l_i}(X)$ is defined by
\[
GW_{g,\beta}(A_1,\ldots,A_n) = \int_{[\bar M_{g,n}(X,\beta)]} \ev_1^*A_1 \cup \cdots \cup \ev_n^*A_n.
\]
So, in the special case that $A_1,\ldots,A_n = \po \in H^{2r}(X)$ are the Poincar\'e dual of the point class, the Gromov-Witten invariant $GW_{g,\beta}(A_1,\ldots,A_n)$ is the virtual degree of the total evaluation map
\[
\ev = \ev_1 \times \cdots \times \ev_n : \bar M_{g,n}(X,\beta) \to X^n.
\]
If we take $X = \PP^2$ and $\ell \in H_2(X;\Z)$ the class of a line, then $N_d = GW_{0,d\ell}(\po,\ldots,\po)=GW_{0,d\ell}(\po^{\otimes n}).$

Over the real numbers $\bbb{R}$, it is no longer true that the number of real degree $d$ rational plane curves passing through $3d-1$ real points is independent of the general choice of points. For example, there can be $8$, $10$, or $12$ real rational cubics passing through $8$ real points~\cite[p. 55]{Degtyarev-Kharlamov}. However, Degtyarev-Kharlamov~\cite[Lem. 4.7.3]{Degtyarev-Kharlamov} showed that a certain difference is always $8$: a real node can have two real branches or the branches can be complex conjugate. The number of real rational cubics whose node has two real branches minus the number whose node has complex conjugate branches is always $8.$
Welschinger showed the invariance of a signed count of rational plane curves over $\R$ of degree $d$ passing through $3d-1-2m$ real points and $2m$ pairs of complex conjugate points. The sign with which a curve contributes to the count is given by the parity of the number of nodes where two complex conjugate branches intersect. More generally, he showed~\cite{Welschinger-invtsReal4mflds,Welschinger3d} the invariance of analogous counts of real $J$-holomorphic spheres on real symplectic manifolds of dimensions $4$ and $6$. In algebraic geometry, this corresponds to counting real rational curves on real surfaces or threefolds.

A topological approach to Welschinger's invariants was developed in the context of open Gromov-Witten theory~\cite{Cho,Solomon-thesis}. The terminology `open' comes from open string theory~\cite{WittenCSST}. Let $X$ be a symplectic manifold of dimension $2r,$ let $L \subset X$ be a Lagrangian submanifold and let $J$ be a tame almost complex structure on $X$. For example, take $L$ to be a component of the real points of a projective algebraic variety $P$ over $\R$, take $X$ to be the complex points of the base change to $\C$ and take $J$ to be the standard complex structure on~$X.$ In this example, we have an anti-symplectic involution $\phi : X \to X$ given by the action of $Gal(\C/\R)$ such that $L \subset Fix(\phi).$ Let $\bar M_{D,s,t}(X/L,\beta)$ denote the space of $J$-holomorphic stable maps $u:(\Sigma,\partial \Sigma) \to (X,L)$ where $\Sigma$ is a nodal disk with $s$ boundary marked points $z_1,\ldots,z_s$ and $t$ interior marked points $w_1,\ldots,w_t$ of degree $u_*[\Sigma,\partial \Sigma] = \beta \in H_2(X,L;\Z).$ Let $\evb_i : \bar M_{D,s,t}(X/L,\beta) \to L$ denote the evaluation map at the $i$th boundary marked point given by $(u,\Sigma,z,w) \mapsto u(z_i).$ Let $\evi_j : \bar M_{D,s,t}(X/L,\beta) \to X$ denote the evaluation map at the $j$th interior marked point given by $(u,\Sigma,z,w) \mapsto u(w_j).$ In nice cases, the space $\bar M_{D,s,t}(X/L,\beta)$ is a manifold with corners of dimension $\mu(\beta)+r-3 + s + 2t$ where $\mu: H_2(X,L;\Z) \to \Z$ is the Maslov index. In general, $\bar M_{D,s,t}(X/L,\beta)$ is singular but nonetheless admits a virtual fundamental class with dimension given by the same formula. Let
\[
\ev_D = \evb_1 \times \cdots \times \evb_s \times \evi_1 \times \cdots \times \evi_t :  \bar M_{D,s,t}(X/L,\beta) \to L^s \times X^t
\]
denote the total evaluation map. Recall that a relative orientation for a map of smooth manifolds $f : M \to N$ is an isomorphism $\det(TM) \overset{\sim}\to f^*\det(TN).$ It was shown in~\cite{Solomon-thesis} that a $Pin$ structure on $L$ and an orientation on $L$ if $L$ is orientable determine a virtual relative orientation for the map $\ev_D$ when the dimensions of the domain and codomain coincide. Since $\bar M_{D,s,t}(X/L,\beta)$ is a manifold with corners, the degree of $\ev_D$ is not a priori defined. However, when $\dim X = 4,6,$ and there is an anti-symplectic involution of $X$ that fixes $L,$ it is possible to glue together certain boundary components of $\bar M_{D,s,t}(X/L,\beta)$ to obtain a new manifold with corners $\widetilde M_{D,s,t}(X/L,\beta)$ with the following two properties.
\begin{enumerate}
\item\label{it:ro}
There is an induced evaluation map $\widetilde{\ev}_D$ that is still relatively oriented.
\item\label{it:codim2}
The image of the boundary $\widetilde{\ev}_D(\partial \widetilde M_{D,s,t}(X/L,\beta)) \subset L^s \times X^t$ has codimension at least $2$.
\end{enumerate}
These two properties allow the degree of $\widetilde{\ev}_D$ to be defined.
There is a natural doubling map $\varpi : H_2(X,L;\Z) \to H_2(X;\Z).$ The degree of $\widetilde{\ev}_D$ coincides up to a factor of $2^{1-t}$ with the Welschinger invariant counting real $J$-holomorphic spheres in $X$ representing the class $\varpi(\beta)$ and passing through $s$ real points and $t$ complex conjugate pairs of points. Open Gromov-Witten theory leads to efficient recursive formulas for Welschinger invariants~\cite{Chen2d,ChenZinger,HorevSolomon,SolomonPreprint,Solomon-Tukachinsky} and the enumeration of disks on the quintic threefold in agreement with mirror symmetry~\cite{PSW}. It also allows the definition of invariants in arbitrary dimension and for $L$ not necessarily fixed by an anti-symplectic involution~\cite{Solomon-Tukachinsky-point-like}.

Analogous results over the real and complex numbers may indicate the presence of a common generalization in the $\bbb{A}^1$-homotopy theory of F. Morel and V. Voevodsky \cite{morelvoevodsky1998} valid over more general fields or base rings. We show this to be the case here. $\bbb{A}^1$-homotopy theory adds homotopy colimits to smooth schemes, allowing one to glue or crush them. For example, one has spheres $\mathbb{P}^n_k/\mathbb{P}^{n-1}_k$, where $k$ is a fixed base field. Morel's $\bbb{A}^1$-Brouwer degree theorem \cite[Theorem 1.23]{morel} identifies the $\mathbb{A}^1$-stable homotopy classes of maps from the sphere $\mathbb{P}^n_k/\mathbb{P}^{n-1}_k$ to itself with the Grothendieck--Witt group $\GW(k)$ of bilinear forms over $k$, recalled below in Section \ref{sssec:gw}. More generally, the theorem computes the $(0,0)$-stable homotopy sheaf of the sphere spectrum in $\bbb{A}^1$-homotopy theory over $k$ to be the sheaf $\sGW$, described more in Section \ref{subsection_with_sGW}. Given polynomial equations for a map $\mathbb{P}^n_k/\mathbb{P}^{n-1}_k \to \mathbb{P}^n_k/\mathbb{P}^{n-1}_k$ the degree is computed as a sum of local degrees in~\cite{KWA1degree}. Morel's $\bbb{A}^1$-Brouwer degree for maps between spheres identifies the target for the $\mathbb{A}^1$-degrees that we develop here and apply to the above evaluation maps. Away from a codimension 1 locus, the degree is the sum of the local degrees over points of the fiber.

\subsection{Statement of results}
The present paper aims to develop certain Gromov--Witten invariants and rational curve counts over perfect fields $k$ of characteristic not $2$ or $3$, by recasting the arguments of~\cite{Solomon-thesis} in $\bbb{A}^1$-homotopy theory. A relative orientation of a morphism $f: M \to N$ of smooth $k$-schemes is an invertible sheaf $L$ on $M$ together with an isomorphism $\rho : Hom(\det TM,\det TN) \to L^{\otimes 2}.$ Let $S$ be a del Pezzo surface over $k,$ in the sense that $S$ is a geometrically connected, smooth, projective $k$-scheme of dimension $2$ with ample anticanonical bundle $-K_S$. Let $d_S = K_S \cdot K_S$ denote the degree of $S$.

Let $\bar M_{0,n}(S,D)$ denote the space of genus zero stable maps with $n$ marked points in the class $D \in Pic(S)$ and consider the total evaluation map $\ev : \bar M_{0,n}(S,D) \to S^n.$ We construct Gromov--Witten invariants associated to points with different residue fields, which we collect in a list called $\sigma$. So, let $\sigma = (L_1,\ldots,L_r)$ be an $r$-tuple of field extensions $k \subset L_i \subset \bar k$ such that $\sum_{i = 1}^r [L_i:k] = n.$ For an $L$-scheme $X$, let $\Res_{L/k} X$ denote the restriction of scalars to $k$. We construct a Galois twist corresponding to $\sigma$ (see Section~\ref{subsectionRCsection:twistsev})
\[
\ev_\sigma: \bar M_{0,n}(S,D)_\sigma \to (S^n)_\sigma = \prod_{i=1}^r \Res_{L_i/k}S.
\]
For the rest of the introduction, we fix $d = -K_S \cdot D$ and $n = d - 1$ and work under the following hypothesis. 

\begin{hyp}\label{hyp:basic}
Let $S$ be a del Pezzo surface over $k$ and $D \in \Pic (S)$. Assume that $D$ is not an $m$-fold multiple of a $-1$-curve for $m>1$. Moreover, assume that $d_S\ge 4$, or $d_S=3$ and $d \neq 6$, or $d_S = 2$ and $d\ge 7$.
\end{hyp}

\subsubsection{Characteristic zero}
Assume first that $k$ has characteristic zero.
Using Hypothesis~\ref{hyp:basic}, we identify a closed subset $A \subset S^n$ such that $V_\sigma : = (\bar M_{0,n}(S,D) \setminus \ev^{-1}(A))_\sigma$ has the following two properties analogous to properties~\ref{it:ro} and~\ref{it:codim2} of $\widetilde M_{D,s,t}(X/L,\beta)$ above. See Theorem~\ref{thm:rel_or_char0}, which builds on \cite[Theorem 4.5]{KLSW-relor}.
\begin{enumerate}[label=(\arabic*$'$)]
\item\label{it:ro'}
The restriction of the total evaluation map $\ev_\sigma: V_\sigma \to (S^n)_\sigma$ is relatively oriented.
\item\label{it:codim2'}
The codimension of $A_\sigma \subset (S^n)_\sigma$ is at least $2$.
\end{enumerate}
In the case $k = \R$, we can make the relation between $\widetilde M_{D,s,t}(X/L,\beta)$ and $V_\sigma$ precise as follows. Take $L \subset X$ the Lagrangian submanifold corresponding to $S,$ take $s$ the number of $i$ such that $L_i = \R$ and $t$ the  number of $i$ such that $L_i = \C.$ There is a commutative diagram
\[
\xymatrix{
\Int \widetilde M_{D,s,t}(X/L,\beta) \ar[r]^(.65){\widetilde{\ev}_D}\ar[d] & L^s \times X^t \ar[d]^\wr \\
V_\sigma(k) \ar[r]^{\ev_\sigma} & (S^n)_\sigma(k)
}
\]
where the right vertical arrow is a bijection and the left vertical arrow is two-to-one in the case $t = 0$ and one-to-one onto a fundamental domain for an action of the group $(\Z/2)^{t-1}$ when $t \geq 1.$

In Definition~\ref{df:pseudo-oriented}, we introduce the notion of a pseudo-orientation of a map from an Artin stack to a smooth $k$-scheme. Building on  F. Morel's $\mathbb{A}^1$-degree \cite{Morel_motivicpi0_sphere,KWA1degree}, Theorem~\ref{tm:A1Degree_pseudo-oriented} associates to a pseudo-oriented map $f: X \to Y$ a unique section of $\deg(f)$ of the Grothendieck-Witt sheaf $\sGW$ over $Y.$ When $Y$ is $\Aone$-connected, such a section is pulled back from a unique element of the Grothendieck-Witt ring $\GW(k).$  
When, in addition, $X$ is smooth and $k = \C,$ the rank of $\deg(f)$ is the usual topological degree. If $k = \R,$ the signature of $\deg(f)$ is the usual topogical degree.    
Properties~\ref{it:ro'} and~\ref{it:codim2'} are used to prove the following theorem, which is one of our main results.
\begin{thmintro}\label{the:intro:deg}
Let $S$ and $D$ satisfy Hypothesis~\ref{hyp:basic}, let $\Char k = 0,$ and assume that $S$ is $\mathbb{A}^1$-connected. Then for each $\sigma$ the evaluation map $\ev_\sigma$ is canonically pseudo-oriented, so there exists an invariant $N_{S,D,\sigma} \in \GW(k)$ given by $\deg(\ev_\sigma)$.
\end{thmintro}
Since the degree of a pseudo-oriented map is compatible with the topological degree as described above, when $k = \C,$ the rank of the invariant $N_{S,D,\sigma}$ coincides with the Gromov-Witten invariant counting rational curves on $S$ of degree $D$ passing through $n = r$ marked points. When $k = \R,$ the signature of the invariant $N_{S,D,\sigma}$ coincides with the Welschinger invariant counting real rational curves on $S$ of degree $D$ passing through $s$ real points and $t$ conjugate pairs of complex points, where $s$ and $t$ are defined as above. This follows from the topological characterization of the Welschinger invariant as the degree of $\widetilde{\ev}_D$ given in~\cite{Cho,Solomon-thesis}. 

The proof of Theorem~\ref{the:intro:deg} is given in Theorem~\ref{thm:rel_or_char0} and Definitions~\ref{df:rel_or_char0} and~\ref{df:qecc}. An explanation of how Hypothesis~\ref{hyp:basic} is used is given in Remark~\ref{rmk:Hyp1needed}.

\subsubsection{Positive characteristic}\label{sssec:pc}
We turn to the case when $k$ has positive characteristic.
Let  $M^\bir_0(S, D) \subset \overline{M}_0(S,D)$ be the open subscheme consisting of maps $u: C \to S$ from irreducible genus $0$ curves such that $C \to u(C)$ is birational. Such $u$ is said to be {\em unramified} if the differential $u^* T^* S \to T^*C$ is surjective. We will use the following hypothesis.
\begin{hyp}\label{hyp:pc}
In addition to Hypothesis~\ref{hyp:basic}, assume $k$ is perfect of characteristic not $2$ or~$3$. If $d_S=2$, assume additionally that for every effective $D' \in Pic(S)$, there is a geometric point $f$ in each irreducible component of $M^\bir_0(S, D')$ with $f$ unramified.
\end{hyp} 

Let $\Lambda$ be a complete discrete valuation ring with residue field $k$ and quotient field $K$ of characteristic $0$. In Section~\ref{sssec:pclift}, building on~\cite[Section 9]{KLSW-relor}, we construct $\LiftS \to \Spec \Lambda$ a smooth del Pezzo surface equipped with an effective $\LiftDeg \in Pic(\LiftS)$ with special fibers $\LiftS_k \cong S$ and $\LiftDeg_k \cong D$. We construct a Galois twist
\[
\Liftev_\sigma: \bar M_{0,n}(\LiftS,\LiftDeg)_\sigma \to (\LiftS^n)_\sigma
\]
that agrees with $\ev_\sigma$ on the special fiber.
Moreover, we identify a closed subset $\LiftA \subset \LiftS^n$ such that $\LiftV_\sigma : = (\bar M_{0,n}(\LiftS,\LiftDeg) \setminus \ev^{-1}(\LiftA))_\sigma$ has the following two properties analogous to properties~\ref{it:ro} and~\ref{it:codim2} of $\widetilde M_{D,s,t}(X/L,\beta)$ above.
\begin{enumerate}[label=(\arabic*$''$)]
\item\label{it:ro''}
The restriction of the total evaluation map $\Liftev_\sigma: \LiftV_\sigma \to (\LiftS^n)_\sigma$ is relatively oriented.
\item\label{it:codim2''}
The codimension of $\LiftA_\sigma \subset (\LiftS^n)_\sigma$ is at least $2$.
\end{enumerate}
Properties~\ref{it:ro''} and~\ref{it:codim2''} are used to prove the following result.
\begin{thmintro}\label{the:intro:degpc}
Let $S,D$ and $k$ satisfy Hypothesis~\ref{hyp:pc} and assume that $S$ is $\mathbb{A}^1$-connected. Then, for each $\sigma$ the evaluation map $\ev_\sigma$ is canonically pseudo-oriented, so there exists an invariant $N_{S,D,\sigma} \in \GW(k)$ given by $\deg(\ev_\sigma)$.
\end{thmintro}
In particular, the pseudo-orientation of $\ev_\sigma$ depends only on the existence of $\LiftS,\LiftDeg,\LiftA,$ but not on the choice of particular ones. The proof of Theorem~\ref{the:intro:degpc} is given in Theorems~\ref{thm:rel_or_charpsf} and~\ref{thm:rel_or_pchar} and Definitions~\ref{df:rel_por} and~\ref{df:qecc}. An explanation of how Hypothesis~\ref{hyp:pc} is used is given in Remark~\ref{rmk:useHyp2}. An explanation of the case $d_S = 2$ in Hypothesis~\ref{hyp:pc} is given in Remark~\ref{rmk:hyp2satisfied}.

\subsubsection{The Grothendieck--Witt ring}\label{sssec:gw}
In order to explain the enumerative meaning of the invariants $N_{S,D,\sigma},$ we recall the definition and basic properties of the Grothendieck-Witt ring $\GW(k).$ The Grothendieck--Witt ring is defined as the group completion of the semi-ring of non-degenerate symmetric bilinear forms over $k$. Since symmetric bilinear forms over a field are stably diagonalizable, an arbitrary element of this group can be expressed as a sum of rank $1$ bilinear forms. Let $\langle a \rangle$ denote the element of $\GW(k)$ corresponding to the rank $1$ bilinear form $k \times k \to k$ given by $(x,y) \mapsto a xy$ for $a$ in $k^*$. Replacing the basis $\{1 \}$ of $k$ by $\{b\}$ for $b$ in $k^*$ gives the equality $\langle a \rangle = \langle a b^2\rangle$, and in particular for fields such that $k^*/(k^*)^2$ is trivial, $\GW(k)$ is isomorphic to $\bbb{Z}$ by the homomorphism taking a bilinear form to its rank, i.e. the dimension of the underlying vector space.  For more general fields, $\GW(k)$ contains more information. For example,
\[
\GW(\bbb{R}) \cong \bbb{Z} \oplus \bbb{Z}, \quad \GW(\bbb{F}_q) \cong \bbb{Z} \times \bbb{F}_q^*/(\bbb{F}_q^*)^2,  \quad  \GW(~\bbb{C}((z))~) \cong \bbb{Z} \times \bbb{C}((z))^*/(\bbb{C}((z))^*)^2 ,
\]
\[
 \GW(\bbb{Q}_q) \cong \frac{\GW(\bbb{F}_q) \oplus \GW(\bbb{F}_q)}{ (\langle 1 \rangle + \langle -1 \rangle , -(\langle 1 \rangle + \langle -1 \rangle))\bbb{Z} }\text{ for }2 \nmid q,
\]
\[
\GW(\bbb{Q}) \cong \bbb{Z} \oplus \bbb{Z} \oplus  \bbb{Z}/2 \bbb{Z} \oplus \bigoplus_{\substack{p \text{ prime }\\ p \neq 2 }} \frac{ \GW(\bbb{F}_p)}{(\langle 1 \rangle + \langle -1 \rangle) \bbb{Z}}.
\]
For finite rank field extensions $L \subseteq E$, there is an additive transfer map $$\Tr_{E/L}: \GW(E) \to \GW(L),$$ which has the following simple description when $L \subseteq E$ is separable: for a symmetric, non-degenerate bilinear form $\beta: V \times V \to E$ over $E$, we can view $V$ as a vector space over $L$ and consider the composition $$V \times V \stackrel{\beta}{\to} E \stackrel{\Tr_{E/L}}{\longrightarrow} L$$ where $\Tr_{E/L}$ is the sum of the Galois conjugates in the algebraic closure of $L$. Since $L \subseteq E$ is separable, $Tr_{E/L} \circ \beta$ is a non-degenerate symmetric bilinear form over $L$. The value of the transfer map on the class $[\beta]$ of the form $\beta$ is given $\Tr_{E/L} [\beta] = [Tr_{E/L} \circ \beta]$.

The Milnor conjecture, proven by Voevodsky and Orlov--Vishik--Voevodsky, defines a sequence of invariants beginning with the rank, discriminant, Hasse--Witt invariant, Arason invariant, which for many fields (including finite fields, number fields, complete discretely valued fields, say in residue characteristic not $2$ etc.) give a terminating algorithm for determining if two elements given by sums of rank $1$ forms $\langle a \rangle$ are equal \cite{Milnor_AlgK-theory_quadratic_forms} \cite{OrlovVishikVoevodsky} \cite{Voevodsky_MCZ2} \cite{Voevodsky-reduced_power_operations}. There are many powerful tools for working with Grothendieck--Witt groups. See for example \cite{lam05} \cite{Lam06} \cite{milnor73}.

\subsubsection{Enumerative meaning}\label{ssec:enummean}
To see the enumerative meaning of the degree $N_{S,D,\sigma}$, we generalize the sign associated to a node with two complex conjugate branches over $\bbb{R}$. Suppose $u: \bbb{P}_{k(u)} \to S$ is a rational curve on $S$ defined over the field extension $k(u)$ of $k$. Let $p$ be a node of $u(\bbb{P}_{k(u)})$. The two tangent directions at $p$ define a field extension $k(p)[\sqrt{D(p)}]$ of $k(p)$, for a unique element $D(p)$ in $k(p)^*/(k(p)^*)^2$. Here the ``$D$" in $D(p)$ could stand for the discriminant of the field extension, or directions, as in the ``tangent directions." By \cite[Expos\'e XV Th\'eor\`eme 1.2.6]{SGA7_10_22}, the extension $k(u) \subseteq k(p)$ is separable. Let $\norm{k(p)}{k(u)}:  k(p)^* \to  k(u)^*$ denote the norm of the field extension $k(u) \subseteq k(p)$ given by the product of the Galois conjugates in the algebraic closure of $k(u)$.

\begin{dfintro}\label{df:massnode:intro}
The {\em mass} of $p$ is defined by
\begin{equation}\label{eon:df:mass:intro}
\mass(p) = \langle \norm{k(p)}{k(u)} D(p) \rangle \quad \text{in}~\GW(k(u)).
\end{equation} 
\end{dfintro} This makes sense because multiplying $D(p) $ by a square in $k(p)$ multiplies the norm by a square in $k(u)$. See Section~\ref{sec:geom_formula_local_deg_ev} for more information. The terminology {\em mass} is as in Welschinger \cite{Welschinger-invtsReal4mflds}.

Let $\sigma = (L_1,\ldots,L_r),$ be a collection of field extensions such that $k \subset L_i \subset \bar k$. We say that a statement holds for $p_i \in S(L_i),\, i = 1,\ldots,r$ in general position if there exists a dense open subset $U\subset \prod_{i=1}^r \Res_{L_i/k} S$ such that the statement holds when $(p_1,\ldots,p_r)$ corresponds to a rational point of $U$ under the canonical bijection $\prod_{i=1}^r S(L_i) \cong (\prod_{i=1}^r \Res_{L_i/k} S) (k)$ coming from the definition of the restriction of scalars. The open subset $U$ may not contain a rational point and thus there may be no $p_1,\ldots,p_r$ in general position. Even for $S=\mathbb{P}^2$, this may happen over a finite field.

The following is valid under the same hypotheses as Theorem~\ref{the:intro:deg} for $k$ of characteristic zero and under the same hypotheses as Theorem~\ref{the:intro:degpc} for $k$ of positive characteristic. It is a special case of Theorem~\ref{thmintro:sGWRes_curve_count}.

\begin{thmintro}\label{the:intro:curve_count} 
For $p_1,p_2,\ldots,p_r \in S$ with $k(p_i) \cong L_i$ in general position, we have the equality
\[
N_{S,D,\sigma} =  \sum_{\substack{u \text{ rational curve} \\  \text{ in class } D\\\text{ through the points} \\ p_1, \ldots, p_r}} \Tr_{k(u)/k}\prod_{p \text{ node}\text{ of }u(\bbb{P}^1)} \operatorname{mass}(p).
\]
in $\GW(k)$.
So the weighted count of degree $D$ rational plane curves through the points $p_1,p_2,\ldots,p_r$ given on the right hand side is independent of the general choice of points. When $k$ is an infinite field and $S$ is rational over $k,$ there exist such $p_1,p_2,\ldots,p_r$.
\end{thmintro}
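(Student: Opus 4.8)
The plan is to compute the Grothendieck--Witt valued degree $N_{S,D,\sigma}$ as a sum of local degrees over the fibre of $ev_\sigma^{\good}$ above the $k$-rational point cut out by the chosen configuration, and then to evaluate each local degree node by node. Because $ev_\sigma^{\good}$ is proper, generically finite and relatively oriented with the bad locus $A$ of codimension $\geq 2$ (properties \ref{it:ro'} and \ref{it:codim2'}, and \ref{it:ro''}, \ref{it:codim2''} in positive characteristic), the theory of the quadratically enriched degree developed in the body of the paper expresses its degree as $\sum_{u}\ind_u(ev_\sigma^{\good})$, the sum over the points $u$ of the fibre above any $k$-rational point of $(S^n)_\sigma$ lying in the good locus over which the fibre is finite and \'etale. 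If $p_1,\dots,p_r$ are in general position, then $(p_1,\dots,p_r)\in(S^n)_\sigma$ is such a point: being general it avoids $A$, the ramification locus and the non-finite locus, all proper closed subsets, and it has the prescribed residue fields $k(p_i)\cong L_i$ rather than smaller ones. In positive characteristic one argues identically with $\tilde{ev}_\sigma^{\good}$, whose degree is read off on the special fibre $ev_\sigma$ over $k$; one simply chooses the $p_i$ over $k$, avoiding the special fibre of $\tilde A$. Thus the theorem reduces to identifying, for each closed point $u$ of the finite reduced fibre, the local degree $\ind_u(ev_\sigma^{\good})$ with $\Tr_{k(u)/k}\prod_{p}\mass(p)$.

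Next I would pin down the fibre. For a general configuration each $u$ corresponds to a rational curve defined over a separable extension $k(u)$ of $k$: the domain is $\PP^1_{k(u)}$, the map is birational and unramified onto its image $C=u(\PP^1)$, which has only nodal singularities, none of them a marked point, with each node $p$ having residue field $k(p)$ separable over $k(u)$. Irreducibility of the domain and the absence of multiple covers hold because the degenerate stable maps all lie in $ev^{-1}(A)$ by Hypothesis \ref{hyp:basic} (this is where $D$ not being a multiple of a $-1$-curve is used); nodality and unramifiedness in general position follow from the structure of the moduli of rational curves on del Pezzo surfaces under Hypothesis \ref{hyp:basic}, supplemented for $d_S=2$ by the unramifiedness assumption in Hypothesis \ref{hyp:pc}; separability of $k(u)\subseteq k(p)$ is \cite[Expos\'e XV]{SGA7_10_22}. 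A short deformation-theoretic check then shows $ev_\sigma$ is unramified at such $u$: its derivative is the identity on the directions moving the marked points along $\PP^1$ plus the evaluation map $H^0(\PP^1,N_u)\to\bigoplus_i(N_u)_{p_i}$ on normal deformations, and the latter is an isomorphism because $N_u(-\sum_i p_i)=u^*(-K_S)\otimes\omega_{\PP^1}(-\sum_i p_i)$ has degree $-1$ on $\PP^1$, hence neither sections nor $H^1$.

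The technical core is the evaluation of $\ind_u(ev_\sigma^{\good})$. Since $u$ is a closed point with residue field $k(u)$, this local degree is $\Tr_{k(u)/k}$ of the intrinsic local degree over $k(u)$; as $d\,ev_\sigma$ is an isomorphism there, the latter is the class $\langle J_u\rangle$ of its Jacobian measured in trivialisations of $\det T_u\bar M_{0,n}(S,D)_\sigma$ and $\det T_{(p_i)}(S^n)_\sigma$ compatible with the relative orientation. Unwinding the deformation complex, $\det T_u\bar M_{0,n}(S,D)=\det H^0(\PP^1,N_u)\otimes\bigotimes_i T_{p_i}\PP^1$; under $d\,ev_\sigma$ the tangent-line factors cancel against the corresponding factors of $\det T_{(p_i)}S^n$, so $\langle J_u\rangle$ is the determinant of $H^0(N_u)\to\bigoplus(N_u)_{p_i}$ corrected by the square class by which the relative orientation of \cite{KLSW-relor} differs from the naive determinant-of-cohomology trivialisation of $\det H^0(N_u)$. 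That correction is a product of node contributions: the relative orientation is built using the isomorphism $u^*\omega_C\cong\omega_{\PP^1}(\widetilde{\calN})$, where $\widetilde{\calN}\subset\PP^1$ is the preimage of the node scheme, and comparing the two natural trivialisations across this isomorphism introduces, for each node $p$, the class of $\norm{k(p)}{k(u)}D(p)$ --- the norm arising because the two branch directions at a node with non-rational branches constitute a single closed point of $\PP^1$ with residue field $k(p)[\sqrt{D(p)}]$. Hence the intrinsic local degree equals $\prod_{p\text{ node}}\langle\norm{k(p)}{k(u)}D(p)\rangle=\prod_p\mass(p)$, so $\ind_u(ev_\sigma^{\good})=\Tr_{k(u)/k}\prod_p\mass(p)$. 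This is the $\bA^1$-homotopical analogue of the $Pin$-structure sign computation of \cite{Solomon-thesis}; the hypotheses that $k$ be perfect of characteristic $\neq 2,3$ enter through the \'etale-local normal form of a node and the separability statements used in this comparison.

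Summing $\ind_u(ev_\sigma^{\good})$ over the closed points $u$ of the fibre --- which are in bijection with the degree $D$ rational curves through $p_1,\dots,p_r$, a curve over $k(u)$ accounting for its full Galois orbit via $\Tr_{k(u)/k}$ --- yields the stated formula, and independence of the general choice is immediate from Theorems \ref{the:intro:deg} and \ref{the:intro:degpc}. When $k$ is infinite and $S$ is rational over $k$, a dense open of $S$ is isomorphic to a dense open of $\bA^2_k$, inside which the closed points with residue field isomorphic to a prescribed $L_i$ form a dense family --- for instance the $L_i$-rational points of the fibre of a coordinate projection over a closed point of $\bA^1_k$ with residue field $L_i$ --- so $r$ such points in general position can be chosen. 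I expect the third step --- matching the relative orientation of \cite{KLSW-relor} with the node-by-node discriminant --- to be the main obstacle, as it requires tracking the determinant-of-cohomology data underlying that orientation together with the square classes contributed by the node scheme.
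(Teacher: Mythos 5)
Your global architecture is the same as the paper's: define $\underline N_{S,D,\sigma}$ as the $\A^1$-degree of $\ev_\sigma$ (in characteristic $p$, of the lifted map), evaluate it at the rational point $p_*$, use the sum-of-local-degrees formula (Proposition~\ref{pr:deg=sumlocal}) over the fibre, use general position to know that every point of that fibre is a birational map with nodal image lying in the twisted ordinary double point locus where $\ev_\sigma$ is \'etale (your normal-bundle computation is in substance Lemma~\ref{ev:etale_odp}), and finally identify each local degree with $\Tr_{k(u)/k}\prod_{p}\mass(p)$. Up to that last identification the proposal is sound, and the closing remarks (independence from Theorems~\ref{the:intro:deg} and~\ref{the:intro:degpc}, existence of general configurations when $k$ is infinite and $S$ is $k$-rational) are fine.

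The gap is the last identification, which you yourself flag as the main obstacle: you assert, but do not prove, that the discrepancy between the relative orientation of \cite{KLSW-relor} and a naive determinant-of-cohomology trivialization of $\det H^0(\PP^1,N_u)$ factors node by node into $\langle \norm{k(p)}{k(u)}D(p)\rangle$, and the comparison you propose would essentially amount to reproving the orientation theorem of \cite{KLSW-relor}. The paper's route needs no such comparison: by Theorem~\ref{thm:Orient1} the orientation is \emph{by construction} $\det d\ev\circ\disc_\pi^{-1}$, where $\pi$ is the double point locus map, so in this orientation the Jacobian of $\ev_\sigma$ at $u$ is tautologically $\disc\pi_\sigma(u)$ (Proposition~\ref{pr:deg_uev=<disc>}, combined with the Jacobian formula of Proposition~\ref{degxf=Tr<J>}). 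The genuine content is then the evaluation of that discriminant, carried out in Proposition~\ref{pr:loc_deg_ev_sigma}: the fibre of $\dpl^\odp_\sigma$ over $u$ splits into the nodes, each node contributing the quadratic \'etale algebra $k(p)[\sqrt{D(p)}]$ over $k(p)$, and multiplicativity of the discriminant together with the tower formula of Lemma~\ref{disc_tower_field_extensions}, $\disc(\calD_{k(p)}/k(u))=\disc(k(p)/k(u))^{2}\,\norm{k(p)}{k(u)}(D(p))$, shows that only the norm survives---the exponent $2$ is precisely why no stray factor $\disc(k(p)/k(u))$ appears, and the twist by $-D_\tac$ in the orientation contributes only squares near $u$. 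Without some argument of this kind your third step records the desired answer rather than establishing it; with it, your proposal becomes the paper's proof.
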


Consequently, for $k = \bbb{C}$ the rank of $N_{S,D,\sigma}$ coincides with the corresponding Gromov--Witten invariant. For $k=\bbb{R}$, the signature of $N_{S,D,\sigma}$ recovers the signed counts of real rational curves of Degtyarev-Kharlamov and Welschinger. For $k = \bbb{F}_p, \mathbb{Q}_p, \mathbb{Q}$ etc., one obtains a new Gromov--Witten invariant.

Even when points $p_1,\ldots,p_r$ with $k(p_i) \cong L_i$ in general position do not exist, $N_{S,D,\sigma}$ is a meaningful invariant. It is the $\mathbb{A}^1$-degree of an evaluation map given in Theorem~\ref{the:intro:degpc} and an analogue of a Gromov--Witten invariant defined over perfect fields of characteristic not $2$ or $3$, including finite fields. Just as Gromov--Witten invariants make sense of curve counts when general position can not be achieved, these analogues give meaning to curve counts when rational points do not exist in a dense open subset $U$ of $\prod_{i=1}^r \Res_{L_i/k} S$.

This degree also retains concrete enumerative significance: The open subset $U$ will contain many points over finite extensions of $k$ and our constructions behave well under base change. Pick a point of $U$ with field of definition $E$. This point corresponds to an element of $S(E \otimes \prod_{i=1}^r L_i)$ by the definition of the restriction of scalars, which in turn corresponds to a list $p_1', \ldots, p_{j}'$ of field valued points of $S$. Corollary~\ref{cor:intro:countA1DelPezzo} gives the equality in $\GW(E),$
\begin{equation}\label{eq:eneqL}
N_{S,D,\sigma} \otimes E =  \sum_{\substack{u \text{ rational curve} \\  \text{ in class } D\\\text{ through the points} \\ p_1, \ldots, p_{r'}}} \Tr_{k(u)/E}\prod_{p \text{ node}\text{ of }u(\bbb{P}^1)} \operatorname{mass}(p).
\end{equation}
This gives infinitely many concrete enumerative equalities \eqref{eq:eneqL}. Base change to $E$ frequently results in a loss of information, but not always. For example, $\otimes E: \GW(k) \to \GW(E)$ is injective for $[E:k]$ odd. In the case where $k$ is a finite field and $S=\mathbb{P}^2$, there is always an extension $k \subset E$ with $[E:k]$ odd and points $p_1,\ldots,p_{j}$ corresponding to a point of $U(E)$, so base change to $E$ loses no information in the left hand sides of the resulting equalities \eqref{eq:eneqL}. 

\subsubsection{Examples}

\begin{exintro}\label{ex:basic}
$\bbb{A}^1$-connected del Pezzo surfaces include $\bbb{P}^2$, $\bbb{P}^1 \times \bbb{P}^1,$ and $\Bl_B \bbb{P}^1$, where $B$ is a set of closed points $\{ p_1, \ldots, p_r\}$ considered as a subscheme defined over $k$ satisfying $\vert B \vert = \sum_{i=1}^r [k(p_i): k] \leq 7$. In this case, $d_S = 9 - \vert B \vert$. In particular, let $k$ be a perfect field of characteristic not $2$ or $3$. Then, Theorems~\ref{the:intro:deg} and~\ref{the:intro:degpc} give invariants $N_{\bbb{P}_k^2, D, \sigma}$ and $N_{\bbb{P}_k^1 \times \bbb{P}_k^1, D, \sigma}$ in $\GW(k)$ for all Picard classes $D$. Similarly, for $\vert B \vert \leq 6$ and $S = \Bl_B\bbb{P}_k^2$, we have $N_{S, D, \sigma}$ for all $D \in Pic(S)$ that are not $m$-fold multiples of a $-1$-curve.
\end{exintro}

\begin{exintro}\label{ex:smooth_proper_rational_surfaces_are_A1-connected}
Smooth, proper, $k$-rational surfaces are also $\bbb{A}^1$-connected \cite[Corollary 2.3.7]{AsokMorel}. So, Theorems~\ref{the:intro:deg} and~\ref{the:intro:degpc} apply to rational del Pezzo surfaces. A smooth cubic surface over $k$ containing two skew lines over $k$ or two skew lines over a quadratic extension of $k$ which are conjugate is $k$-rational \cite[1.33, 1.34]{Kollar-Rational_and_nearly}.  Cubic surfaces are del Pezzo surfaces with $d_S=3$, so Theorems~\ref{the:intro:deg} and~\ref{the:intro:degpc} give invariants $N_{S, D, \sigma}$ in $\GW(k)$ for any $D$ with $d = -K_S \cdot D \neq 6$ and $D$ not an $m$-fold multiples of a $-1$-curve. For example, let $S_0\subset \bbb{P}^3$ be the smooth cubic surface given by the zero locus of $x^2 y + y^2 z + z^2 w + w^2 x$. Then $S_0$ is rational \cite[1.4]{Kollar-Rational_and_nearly} giving invariants $N_{S_0, D, \sigma}$ in $\GW(k)$.
\end{exintro}

Examples of del Pezzo surfaces which are not $\bbb{A}^1$-connected are available in Example~\ref{ex:SnotA1connected}.

\begin{exintro}
We compute $N_{S,- K_{S}, \sigma} =  \langle -1 \rangle\chi^{\A^1}(S)  + \langle 1 \rangle +  \Tr_{k(\sigma)/k} \langle 1 \rangle$, where $\chi^{\bbb{A}^1}(S)$ denotes the $\mathbb{A}^1$-Euler characteristic. See Example~\ref{ex:NS-KSsigma}. For real and complex computations giving analogous Welschinger and Gromov--Witten invariants, see for example \cite[Proposition 4.7.3]{Degtyarev-Kharlamov} \cite[3.3]{Sottile-EnumerativeReal} \cite[Chapter 7 Keynote Question(a), Proposition 7.4]{EisenbudHarris3264}. For $S_0$ as in Example~\ref{ex:smooth_proper_rational_surfaces_are_A1-connected}, $\chi^{\bbb{A}^1}(S_0) = \langle -5\rangle + 4(\langle 1\rangle + \langle -1 \rangle)$ \cite{LLV-Eulerchar}, which gives $N_{S_0,- K_{S_0}, \sigma} =  \langle 5\rangle +  \langle 1\rangle +4(\langle 1\rangle + \langle -1 \rangle) +  \Tr_{k(\sigma)/k} \langle 1 \rangle$. 
\end{exintro}

Considerable progress has been made computing the invariants of Theorems \ref{the:intro:deg} and \ref{the:intro:degpc} since this work became available. Andr\'es Jaramillo Puentes and Sabrina Pauli computed the enriched count of rational curves through rational points on a toric surface via a tropical correspondence theorem \cite{PuentesPauli-Correspondence}, building on their previous work~\cite{PuentesPauli-Bezout}. Hannah Markwig, Jaramillo Puentes, Pauli, and Felix R\"ohrle further develop these tropical techniques to allow points with residue fields which are degree $2$ extensions of the base field \cite{PuentesMarkwigPauliRorhle-quad_extensions}. Using this work and a quadratically enriched Abramovich--Bertram formula \cite{BrugalleWickelgren-AB}, Erwan Brugall\'e, Johannes Rau, and the fourth named author compute $N_{S,D,\sigma}$ for all $\sigma$ and $(S,D)$ as above with $d_S \geq 6$ \cite{BRW-WWI}. Conjecturally, the formula of \cite{BRW-WWI} for $N_{S,D,\sigma}$ holds for all rational surfaces. Markwig, Jaramillo Puentes, Pauli, and R\"ohrle also have the analogue of a Caparaso--Harris recursion formula when the interpolated points are rational \cite{JPMPR-tropPlaneCurves}.

\subsubsection{Without the connectedness hypothesis} \label{subsection:notA1-connected}\label{subsection_with_sGW}
Theorems~\ref{the:intro:deg},~\ref{the:intro:degpc} and \ref{the:intro:curve_count} above are special cases of more general results that do not require that $S$ be $\mathbb{A}^1$-connected.

The Grothendieck--Witt groups $\GW(E)$ discussed above for $E$ a finite type field extension of $k$, together with certain boundary maps, determine a sheaf of abelian groups on smooth $k$-schemes
\begin{align*}
\sGW: \Sm^{\op} \to \Ab,
\end{align*}
which is unramified in the sense of e.g. \cite[Definition 2.1]{morel}. For $X$ a smooth $k$-scheme, $\sGW(X) \subset \GW(k(X))$ is the subset of the Grothendieck--Witt group of its field of rational functions which is in the kernel of boundary maps indexed by the codimension $1$ points of $X$. See \cite[Definition 2.1, Lemma 3.10, Section 3.2]{morel}. For a presheaf $X: \Sm^{\op} \to \Set$, define $\sGW(X):=\Map_{\Fun(\Sm^{\op},\Set)}(X,\sGW)$. This will be discussed further in Section \ref{subsection:Globaldegreemapsmpro}.

To formulate our general result, we use the sheaf of $\bbb{A}^1$-connected components, $\pi_0^{\bbb{A}^1}$. This sheaf arises naturally when considering the degree of a morphism to a scheme that is not $\bbb{A}^1$-connected, reflecting the classical phenomenon that a map to a disconnected manifold may have a different degree over different connected components.
Unlike in classical topology, it is not possible to decompose a smooth scheme into $\bbb{A}^1$-connected pieces; the sheaf of connected components is often a very complicated object. For a smooth scheme $X$, define $\pi_0^{\bbb{A}^1}(X)$ to be the Nisnevich sheaf associated to the presheaf taking a smooth $k$-scheme $U$ to $[U,X]_{\bbb{A}^1}$, where $[U,X]_{\bbb{A}^1}$ denotes the (unstable) $\bbb{A}^1$-homotopy classes of maps from $U$ to $X$. A smooth scheme $X$ is said to be $\bbb{A}^1$-connected when $\pi_0^{\bbb{A}^1}(X)$ is trivial. This is discussed further in Section~\ref{subsection:GWk-valued_global_degree}. As in topology, there is a natural map $X \to \pi_0^{\bbb{A}^1}(X)$. Since $\sGW$ is $\Aone$-homotopy invariant~\cite{morel}, every map $X \to \sGW$ factors uniquely through the map $X \to \pi_0^{\Aone}(X).$ See Proposition~\ref{pr:htpyinvtsheaf_iso_Xtopi0} for details. Thus, we obtain the following generalization of Theorem~\ref{the:intro:deg} when the $\Aone$-connectedness hypothesis is removed.
\begin{thmintro}\label{the:intro:degnc}
Let $S$ and $D$ satisfy Hypothesis~\ref{hyp:basic}, and let $\Char k = 0.$ Then for each $\sigma$ the evaluation map $\ev_\sigma$ is canonically pseudo-oriented, so there exists an invariant 
\[
\underline N_{S,D,\sigma} \in \sGW(\pi_0^{\bbb{A}^1}(\prod_{i=1}^r \Res_{L_i/k} S))
\]
given by the degree of $\ev_\sigma$.
\end{thmintro}
The proof of Theorem~\ref{the:intro:degnc} is given in Theorem~\ref{thm:rel_or_char0} and Definitions~\ref{df:rel_or_char0} and~\ref{df:qecc}.
Similarly, Theorem~\ref{the:intro:degpc} generalizes as follows when the $\Aone$-connectedness hypothesis is removed.
\begin{thmintro}\label{the:intro:degpcnc}
Let $S,D$ and $k$ satisfy Hypothesis~\ref{hyp:pc}. Then, for each $\sigma$ the evaluation map $\ev_\sigma$ is canonically pseudo-oriented, so there exists an invariant $\underline N_{S,D,\sigma} \in \sGW(\pi_0^{\bbb{A}^1}(\prod_{i=1}^r \Res_{L_i/k} S))$ given by the degree of $\ev_\sigma$.
\end{thmintro}

The proof of Theorem~\ref{the:intro:degpcnc} is given in Theorems~\ref{thm:rel_or_charpsf} and~\ref{thm:rel_or_pchar} and Definitions~\ref{df:rel_por} and~\ref{df:qecc}.
For a $k$-point $x$ of $X$ and a section $\underline{N}$  in $\sGW(\pi_0^{\bbb{A}^1}(X))$, let $\underline{N}(x) \in \GW(k)$ denote the pullback of $\underline{N}$ to $x$ along the composition $\Spec k \stackrel{x}{\to} X \to \pi^{\bbb{A}^1}_0(X)$.
The following generalization of Theorem~\ref{the:intro:curve_count} is valid under the same hypotheses as Theorem~\ref{the:intro:degnc} for $k$ of characteristic zero and under the same hypotheses as Theorem~\ref{the:intro:degpcnc} for $k$ of positive characteristic. It is a special case of Theorem~\ref{thmintro:sGWRes_curve_count} below, where the proof is given.

\begin{thmintro}\label{the:intro:curve_countnc}
For $p_1,p_2,\ldots,p_r \in S$ with $k(p_i) \cong L_i$ in general position corresponding to $p_* \in \prod_{i=1}^r \Res_{L_i/k} S,$ we have the equality in $\GW(k),$
\[
\underline{N}_{S,D, \sigma}(p_*) =  \sum_{\substack{u \text{ rational curve} \\  \text{ in class } D\\\text{ through the points} \\ p_1, \ldots, p_r}}  \Tr_{k(u)/k} \prod_{p \text{ node}\text{ of }u(\bbb{P}^1)}\operatorname{mass}(p).
\]
When $k$ is an infinite field and $S$ is rational over $k$, such a general choice of points exists.
\end{thmintro}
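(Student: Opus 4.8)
The plan is to reduce Theorem~\ref{the:intro:curve_countnc} to the computation of the local contributions of the $\bbb{A}^1$-degree of $ev_\sigma^\good$ (resp. $\tilde{ev}_\sigma^\good$) at points in general position, and to identify each such local contribution with the mass term $\Tr_{k(u)/k}\prod_p \operatorname{mass}(p)$. First I would invoke Theorem~\ref{the:intro:degnc} (resp. Theorem~\ref{the:intro:degpcnc}): the section $\underline{N}_{S,D,\sigma}$ is \emph{defined} as the degree of $ev_\sigma^\good$ in the sense developed earlier, so by the very construction of the $\GW$-valued degree of a morphism satisfying properties~\ref{it:ro'}/\ref{it:codim2'} (resp.~\ref{it:ro''}/\ref{it:codim2''}), pulling back $\underline{N}_{S,D,\sigma}$ along a rational point $p_*$ of the dense open $U$ computes $\underline{N}_{S,D,\sigma}(p_*)$ as the sum, over the fiber $(ev_\sigma^\good)^{-1}(p_*)$, of local $\bbb{A}^1$-degrees. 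Here one uses that a general point $p_*\in U$ has the property that the fiber consists of finitely many rational curves $u$ through $p_1,\dots,p_r$, each counted over its field of definition $k(u)$, so the fiber decomposes into closed points indexed by these $u$; the contribution of the closed point corresponding to $u$ is $\Tr_{k(u)/k}$ of the local index of $ev_\sigma^\good$ at that point of $\bar M_{0,n}(S,D)_\sigma^\good$.

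The heart of the argument is then the local computation: identify the local $\bbb{A}^1$-degree of $ev_\sigma^\good$ at the point $[u]$ with $\prod_{p\text{ node of }u(\bbb{P}^1)}\operatorname{mass}(p)$ in $\GW(k(u))$. For this I would pass to the field $k(u)$, where $u$ becomes a single rational curve with $\delta$ nodes through rational points, and analyze the local structure of the moduli space and the evaluation map at $[u]$. The relative orientation from~\ref{it:ro'} trivializes $\Hom(\det T\bar M, ev^*\det T(S^n)_\sigma)$ up to squares, so the local degree is the class in $\GW(k(u))$ of the Jacobian of $ev$ in suitable coordinates, measured against this orientation. The key geometric input is that the obstruction to deforming $u$ while keeping the marked points fixed is concentrated at the nodes: smoothing the $i$-th node contributes a tangent direction whose square class is governed by the field extension $k(p_i)[\sqrt{D(p_i)}]/k(p_i)$ defining the two branch directions, and the norm $\norm{k(p_i)}{k(u)}$ appears because $ev$ is evaluated over $k(u)$ while the node lives over $k(p_i)$. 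Assembling these local factors over all nodes, and matching signs against the relative orientation, yields exactly $\prod_p\operatorname{mass}(p)$; this is the $\bbb{A}^1$-enriched refinement of the classical fact that a rational curve with $\delta$ nodes contributes $(\pm 1)$ to the Gromov--Witten count according to the parity of the number of nodes with non-real branches over $\R$.

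For the positive characteristic case I would run the same argument on the arithmetic family $\tilde S\to\Spec\Lambda$ of Section~\ref{sssec:pc}: the degree of $\tilde{ev}_\sigma^\good$ is computed on the special fiber because the $\GW(k)$-valued degree is constructed to be invariant under the specialization from $K$ to $k$, and the additional unramifiedness hypothesis in Hypothesis~\ref{hyp:pc} for $d_S=2$ guarantees that the curves in question deform transversally so that the node-by-node local analysis goes through verbatim over $k$. The final sentence about existence of a general choice of points follows from the standard fact that for $k$ infinite and $S$ rational over $k$, the $k$-points are Zariski dense in $\prod_{i=1}^r\Res_{L_i/k}S$, hence meet the dense open $U$.

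The main obstacle I expect is the local index computation at $[u]$: one must set up explicit local coordinates on $\bar M_{0,n}(S,D)_\sigma$ near a nodal curve (including the contribution of the node-smoothing parameters and of the $\Res_{L_i/k}$-twist), compute the Jacobian of $ev_\sigma$ in these coordinates as an element of $k(u)^*/(k(u)^*)^2$ after trivializing by the relative orientation, and verify that it equals $\norm{k(p)}{k(u)}D(p)$ up to squares for each node $p$ and that these multiply correctly. Controlling the relative orientation sign and ensuring compatibility with the Galois twist is the delicate bookkeeping, but the geometry — that the only moduli of a nodal rational curve through the right number of points is in the smoothings of its nodes — makes the structure of the answer forced.
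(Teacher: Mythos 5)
Your global skeleton matches the paper: $\underline N_{S,D,\sigma}$ is by definition $\deg\ev_\sigma$, and at a rational point $p_*$ of the dense open set the degree is the sum over the fiber of local degrees, each transferred along $k(u)/k$ (this is Proposition~\ref{pr:deg=sumlocal} together with Proposition~\ref{degxf=Tr<J>}, plus the input from \cite{KLSW-relor} that for general $p_*$ every curve in the fiber lies in the ordinary double point locus). But the step you yourself flag as the ``main obstacle'' --- identifying $\deg_u\ev_\sigma$ with $\Tr_{k(u)/k}\prod_p\mass(p)$ --- is exactly the content of the paper's Propositions~\ref{pr:deg_uev=<disc>} and~\ref{pr:loc_deg_ev_sigma}, and your proposed route to it is both incomplete and pointed in the wrong direction. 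At a point $u$ of $M^{\odp}_{0,n}(S,D)_\sigma$ over a general $p_*$ the evaluation map is \'etale (Lemma~\ref{ev:etale_odp}); there is no obstruction concentrated at the nodes and no node-smoothing parameter entering the fiber of $\ev_\sigma$, so the mass cannot be extracted from a deformation-theoretic analysis of smoothings. The mass enters solely through the choice of relative orientation: by Theorem~\ref{thm:Orient1} (resp.\ its twisted version from \cite{KLSW-relor}) the orientation is the composite $\det d\ev_\sigma\circ\disc_{\pi_\sigma}^{-1}$ built from the double point locus $\pi_\sigma\colon\dpl\to \bar M_{0,n}(S,D)_\sigma$, so on the \'etale locus the Jacobian class of $\ev_\sigma$ measured against this orientation is \emph{tautologically} $\disc\pi_\sigma$, and Proposition~\ref{degxf=Tr<J>} gives $\deg_u\ev_\sigma=\Tr_{k(u)/k}\langle\disc\pi_\sigma(u)\rangle$. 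The remaining work is purely algebraic, not geometric-local-coordinate: over $u$ the double point locus splits as the disjoint union over the nodes $p$ of the quadratic \'etale algebras $k(p)[\sqrt{D(p)}]$ (or split nodes), and the discriminant-in-towers Lemma~\ref{disc_tower_field_extensions} yields $\disc\pi_\sigma(u)=\prod_p N_{k(p)/k(u)}D(p)=\prod_p\mass(p)$. Without this identification (or an actual execution of the coordinate computation you defer), the proof is not complete.

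Two smaller points. In positive characteristic the lift $(\Lambda,\tilde S,\tilde D)$ and Proposition~\ref{pr:GWExtension} are needed only to show that the section defined over the good open set $U$ extends to all of $(S^n)_\sigma$, i.e.\ to make $\underline N_{S,D,\sigma}$ well defined; the value at a general $p_*$ is computed directly on the special fiber, where the local-degree analysis of Section~\ref{subsection:nodes} applies verbatim, so no specialization argument from $K$ to $k$ is involved in the displayed formula. Finally, the genericity you invoke must include that the curves through $p_*$ are birational onto their images with only ordinary double points and that $\ev_\sigma$ is \'etale along the fiber; this is supplied by Theorem~\ref{prop:Good} and the results of \cite{KLSW-relor}, and should be cited rather than absorbed into ``generally chosen.''
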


We may alternatively package the invariants $\underline{N}_{S,D, \sigma}$ into a single invariant as follows. Let $\Sym^n_0 S \subset \Sym^n S$ be the complement of the union of pairwise diagonals in the $n$-fold symmetric product. Let $\Delta_\sigma \subset (S^n)_\sigma = \prod_{i=1}^r \Res_{L_i/k} S$ denote the union of pairwise diagonals. 
\begin{thmintro}\label{thmintro:GWSymn0}
If $\Char k = 0,$ let $S$ and $D$ satisfy Hypothesis~\ref{hyp:basic} and if $\Char k >0$ let $S,D$ and~$k$ satisfy Hypothesis~\ref{hyp:pc}.
There exists $\underline N_{S,D}^{\mfS} \in \sGW(\pi_0^{\bbb{A}^1}(\Sym^n_0 S))$ that pulls back to the restriction of $\underline N_{S,D,\sigma}$ for each $\sigma$ under the natural map $\prod_{i=1}^r \Res_{L_i/k} S \setminus \Delta_\sigma \to \Sym^n_0 S$.
\end{thmintro}
The proof of Theorem~\ref{thmintro:GWSymn0} is given in Theorems~\ref{thm:rel_or_char0_sym},~\ref{thm:rel_or_charpsf_sym} and~\ref{thm:rel_or_pchar_sym}, Definitions~\ref{df:rel_or_char0_sym},~\ref{df:rel_por_sym} and~\ref{df:qecc_sym}, and Proposition~\ref{pr:pullback_deg_sym_gives_deg_twist}.

\begin{exintro}
Building on Example~\ref{ex:basic}, let $S$ be a twist of $\Bl_B\bbb{P}_k^2$ and let $k$ be a perfect field of characteristic not $2$ or $3$. Then, Theorems~\ref{the:intro:degnc} and~\ref{the:intro:degpcnc} give us invariants $\underline N_{S, D, \sigma}$ in $\sGW(\pi_0^{\bbb{A}^1}(\prod_{i=1}^r \Res_{L_i/k} S))$ for all $D \in Pic(S)$ that are not $m$-fold multiples of a $-1$-curve.
\end{exintro}

It is a problem of fundamental importance to find quadratically enriched analogs of the rich algebraic structures present in Gromov-Witten theory and open-Gromov-Witten-Welschinger theory. For example, it would be desirable to unify the WDVV equations~\cite{Kontsevich-Manin,McDuff-Salamon,Ruan-Tian,Ruan-Tian94,Witten90} and the open WDVV equations~\cite{Chen2d,ChenZinger,HorevSolomon,SolomonPreprint,Solomon-Tukachinsky} in the quadratically enriched context, thus extending them to an arbitrary field. The WDVV and open WDVV equations are systems of non-linear partial differential equations satisfied by generating series for the Gromov-Witten and open Gromov-Witten invariants respectively. They yield powerful recursion relations that in many cases determine invariants of arbitrary degree from a small number of a low degree invariants.
A unification of these equations underlies the associativity of the ring structure of relative quantum cohomology~\cite{Solomon-Tukachinsky}. Nonetheless, the geometric proofs of the two systems of equations are different and it is not straightforward to unify them in the quadratically enriched setting. We plan to address this problem in future work.

\subsection{Outline}
The paper is organized as follows. The main focus of Section~\ref{Section:degree} is the definition a pseudo-oriented morphism and the quadratically enriched degree thereof. More straightforward notions of orientation and degree are discussed along the way. There is also a discussion of the behavior of degree under base change. $\Aone$-homotopy theory forces the degree to be an element of $\GW(k)$ under connectivity hypotheses. These connectivity hypotheses are discussed at the end of the section, and then used in Section~\ref{section:examples}.
Section~\ref{sec:local degree} gives a formula for the degree of a map, which is a priori a global invariant, as a sum of local contributions from each point in a fiber. The contribution from a given point is called the local degree at that point. 
Section~\ref{section:CountsRationalCurves} summarizes necessary background concerning moduli spaces of stable maps and recalls results from~\cite{KLSW-relor}. These results are used to define pseudo-orientations of evaluation maps.
Section~\ref{sec:degev} defines the quadratically enriched count of rational curves on $S$ of degree $D$ asociated with the extension fields $\sigma = (L_1,\ldots,L_r)$ as the degree of a pseudo-oriented twisted evaluation map. This degree is compared with that of a symmetrized evaluation map. Section~\ref{sec:geom_formula_local_deg_ev} gives a geometric interpretation of the local degree of the twisted evaluation map at a general rational curve in terms of the field of definition and the nodes of the image curve.
Section~\ref{sec:enumerative interpretation} uses the geometric interpretation of the local degree and results on the existence of appropriate loci in the moduli space of curves to give a purely enumerative computation of the degree of the twisted evaluation map under certain hypotheses.
Section~\ref{section:examples} gives example computations of quadratically enriched counts of rational curves.  

\subsection{Acknowledgements} We thank Jean Fasel, Dan Freed, Fabien Morel, and Rahul Pandharipande for useful discussions. ML is supported by the ERC Grant QUADAG: this paper is part of a project that has received funding from the European Research Council (ERC) under the European Union's Horizon 2020 research and innovation programme (grant agreement No. 832833).\\
\includegraphics[scale=0.08]{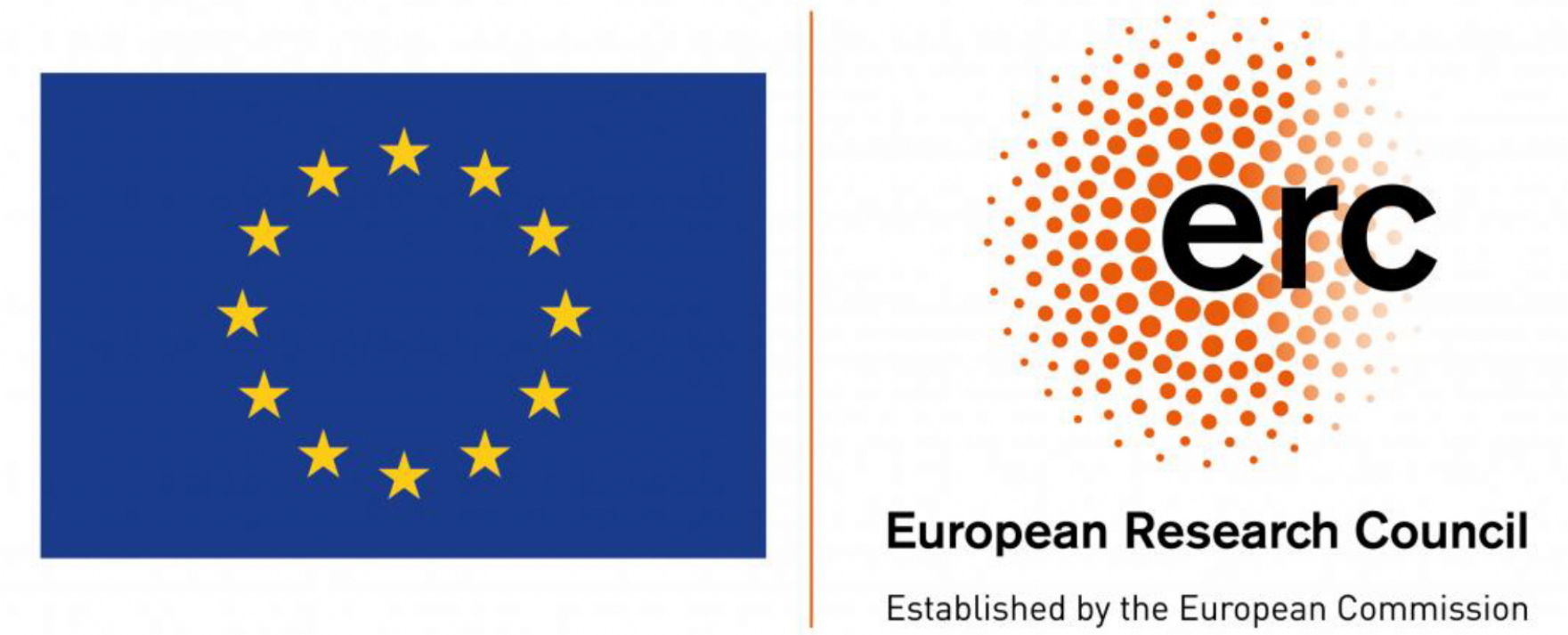}. \\
JS was partially supported by ERC Starting Grant 337560 as well as ISF Grants~569/18 and~1127/22 and the Miriam and Julius Vinik Chair in Mathematics. KW was partially supported by National Science Foundation Awards DMS-1552730, DMS-2001890, DMS-2103838, and DMS-2405191. She also thanks the Newton Institute and the organizers and participants of the special program {\em Homotopy harnessing higher structures} for hospitality while working on this paper, and the {\em A Room of One's Own} initiative for focused writing time.

\section{Degree}\label{Section:degree}
\subsection{Orientations}\label{section:orientations}

Define a local complete intersection morphism $f: X \to Y$ as in \cite[Tag 068E]{stacks-project}. For example, let $i$ be a closed immersion locally determined by a regular sequence and let $\pi$ be a smooth map. The composition $f= \pi \circ i$ is then a local complete intersection morphism. A finite type map between regular schemes is also a local complete intersection morphism \cite[Tag 0E9K]{stacks-project}. For $f: X \to Y$ a local complete intersection morphism, the cotangent complex $L_f$ is perfect \cite[08SL]{stacks-project} and we may form its determinant, which is a line bundle on $X$. (We could view a shift of this line bundle by some integer as the determinant viewed as an element of the derived category, but we don't do this.) Define $\omega_f$ by $$\omega_f := \det L_f.$$

\begin{ex}
For $f= \pi \circ i$, there is a canonical isomorphism \begin{equation}\label{omega_of_composite}\omega_f \cong i^* \omega_{\pi} \otimes \omega_i\end{equation}\cite[Tag 08QX]{stacks-project}. When we additionally assume that $i$ is a closed immersion determined by a regular sequence and $\pi$ is smooth as above, we have canonical isomorphisms $$\omega_i \cong \det (\calI/\calI^2)^* ,$$ \begin{equation}\label{omega_smooth} \omega_{\pi} \cong \det \Omega_\pi,\end{equation} where $\calI$ denotes the ideal sheaf associated to the closed immersion $i$, and $\Omega_\pi$ denotes the sheaf of K\"ahler differentials \cite[08SJ]{stacks-project} \cite[Tag 08R4]{stacks-project}.\footnote{The references treat affine schemes, but the isomorphisms globalize.} 
\end{ex}

\begin{ex}
For $f: X \to Y$ with $X$ and $Y$ smooth $k$-schemes, $\omega_f = \Hom(\det TX, f^* \det TY)$. \end{ex}

\begin{df}\label{df:rofunction}
An orientation for a complete local intersection (lci) morphism $f$ is the choice of an invertible sheaf $L$ on $X$ and an isomorphism $\rho:\omega_f\to L^{\otimes 2}$.

\end{df}

We do not assume $X$ or $Y$ smooth in Definition~\ref{df:rofunction}.

\subsection{Degree of an oriented map}\label{subsection:degree_finite_map}\label{subsection:degree_finite_flat_oriented_lci}

Suppose $f:X \to Y$ is a finite, flat, local complete intersection morphism with relative orientation $\rho:\omega_f\stackrel{\cong}{\to} L^{\otimes 2}$. For simplicity, we assume that $X$ and $Y$ are locally Noetherian. We define the $\mathbb{A}^1$-degree of $f$. For a finite extension $A \to B$ of commutative rings, $\Hom_A(B,A)$ is a $B$-module via the multiplication of $B$ on $B$: $(bf)(b') := f(bb')$. Globalizing, let $f^{\natural}\Hom_{\calO_Y} (f_* \calO_X, \calO_Y)$ denote the corresponding sheaf of $\calO_X$-modules on $X$. Grothendieck--Serre duality in this context can be described as a canonical isomorphism 
\begin{equation}\label{eq:Serre_duality_finite_flat_lci}
\omega_f \cong f^{\natural}\Hom_{\calO_Y} (f_* \calO_X, \calO_Y)
\end{equation} together with the trace map $\Tr_f: f_* \omega_f \to \calO_Y$ defined by the map $f_* f^{\natural}\Hom_{\calO_Y} (f_* \calO_X, \calO_Y) \to \calO_Y$ given by evaluation at $1$ in $\calO_X$ and the isomorphism \eqref{eq:Serre_duality_finite_flat_lci}. Grothendieck--Serre duality says that this trace map satisfies the condition that for all locally free coherent sheaves $M$ on $X$, the composition
\begin{equation}\label{eq:Serre_duality_morphism_version_finite_flat_lci}
f_* \Hom_{\calO_X}(M, \omega_f) \to \Hom_{\calO_Y} (f_* M, f_* \omega_f) \stackrel{\Tr_f}{\to} \Hom_{\calO_Y}(f_* M, \calO_Y)
\end{equation} is an isomorphism. In Equation~\ref{eq:Serre_duality_morphism_version_finite_flat_lci}, the first map is the canonical homomorphism and the second is induced by  $\Tr: f_* \omega_f \to \calO_Y$.  We include a description of the canonical isomorphism \eqref{eq:Serre_duality_finite_flat_lci} satisfying \eqref{eq:Serre_duality_morphism_version_finite_flat_lci} following Scheja and Storch \cite{SchejaResiduen} \cite{scheja}. See also, e.g., \cite[III Section 6]{HartshorneRD}.

Locally, $Y$ can be covered by affines $\Spec A$ such that $f^{-1}(\Spec A) \cong \Spec B$ and $f$ corresponds to the ring map $ A \to A[x_1,\ldots,x_n]/I \cong B $ where $I = (f_1,\ldots,f_n)$. Choose elements $a_{ij}$ in $A[x_1,\ldots,x_n] \otimes_A A[x_1,\ldots,x_n]$ such that
\[
f_i \otimes 1 - 1 \otimes f_i = \sum_j a_{ij} (x_i \otimes 1 - 1 \otimes x_i),
\] and let $\Delta$ in $B \otimes_A B$ denote the image of $\det (a_{ij})$, which is independent of the choice of $a_{ij}$ \cite[Satz 3.1]{scheja} . 

Sending the element $\overline{f_1} \wedge \overline{f_2} \wedge \ldots \wedge \overline{f_n}$ in $\wedge^n (I/I^2)$ to $\Delta$ defines a $B$-module isomorphism $\wedge^n (I/I^2) \to (\Delta)$. By \cite[Satz 1.1]{SchejaResiduen}, this isomorphism is independent of the chosen generators for $I$. On the other hand, the map $B \otimes_A B \to \Hom_A (\Hom_A(B,A),B)$ sending $b_1 \otimes b_2$ to $\alpha \mapsto \alpha(b_1)b_2$ restricts to an isomorphism of $B$-modules from the ideal $(\Delta)$ to $ \Hom_B (\Hom_A(B,A),B)$. See \cite[Satz 3.1, 3.2]{scheja}. Composing, we obtain an isomorphism of $B$-modules $\wedge^n (I/I^2) \to  \Hom_B (\Hom_A(B,A),B)$. Since the relevant $B$ modules are projective of finite rank, we can dualize: the canonical isomorphism $\wedge^n (I/I^2) \cong  \Hom_B (\omega_{B/A},B)$ defines an isomorphism $\varphi: \omega_{B/A} \to \Hom_A(B,A)$ which is independent of the generators of $I$. It follows from \cite[Satz 1.4]{SchejaResiduen} that $\varphi$ is independent of the choice of presentation, allowing us to globalize, producing the canonical isomorphism \eqref{eq:Serre_duality_finite_flat_lci}. To see that \eqref{eq:Serre_duality_morphism_version_finite_flat_lci} holds for all locally free coherent sheaves $M$ on $X$, it suffices to treat the case $M = \calO_X$, which is in \cite[Satz 1.4]{SchejaResiduen}.

Returning to the oriented map $f,$ we have that $f_* L$ is locally free because $f$ is flat. Since $f^*$ is symmetric monoidal, there is a canonical lax monoidal structure on the right adjoint $f_*$ giving a map $f_* L \otimes f_* L \to f_* ( L^{\otimes 2} )$.

\begin{df} \label{GSDdegreedef}
Suppose $f:X \to Y$ is a finite, flat, local complete intersection morphism between locally Noetherian schemes with relative orientation $\rho:\omega_f\stackrel{\cong}{\to} L^{\otimes 2}$. The {\em degree} $\deg f$ of $f$ is the bilinear form $ f_* L \otimes f_* L \to \calO_Y$ given by the composition \begin{equation*}
f_* L \otimes f_* L \to f_* ( L^{\otimes 2} )\stackrel{f_*(\rho^{-1})}{\longrightarrow} f_* \omega_f \stackrel{\Tr_f}{\longrightarrow} \calO_Y
\end{equation*} When we wish to make the orientation explicit, we also write $\deg (f,\rho)$.
\end{df}

\begin{pr}\label{pr:GSDdegreedef-well-defined}
$\deg f$ is symmetric and non-degenerate.
\end{pr}

\begin{proof}

The swap map $L^{\otimes 2} \to L^{\otimes 2}$ defined by taking $\ell \otimes \ell'$ to $\ell' \otimes \ell$ is equal to the identity map, and it follows that $\deg f$ is symmetric.

The adjoint map $f_* L \to \Hom_{\calO_Y} (f_* L , \calO_Y)$ to $\deg f$ is the composition of the pushforward of $\mu_{\rho}: L \to \Hom_{\calO_X}(L, L^{\otimes 2}) \stackrel{\rho}{\cong} \Hom_{\calO_X}(L, \omega_f)$ with the canonical map $f_* \Hom_{\calO_X}(L,  \omega_f) \to \Hom_{\calO_Y}(f_* L, f_* \omega_f)$, followed by the map $\Hom_{\calO_Y}(f_* L, f_* \omega_f) \to \Hom_{\calO_Y}(f_* L,\calO_Y)$ induced by $\Tr_f $. Since $\rho$ is an isomorphism, so is  $\mu_{\rho}$ and $f_*  \mu_{\rho}$. The composition of the following two maps is the isomorphism \eqref{eq:Serre_duality_morphism_version_finite_flat_lci} from Grothendiek--Serre duality .

\end{proof}

\begin{ex}\label{A1deg_finite_etale_map}
A finite \'etale map $f: X \to Y$ admits a canonical relative orientation $\omega_f \cong \calO_X^{\otimes 2}$ and the resulting $\bbb{A}^1$-degree is simply the classical trace form.
\end{ex}

\begin{ex}
Let $f: X \to Y$ be a finite map between smooth $n$-dimensional $k$-schemes with relative orientation $\rho:\omega_f\stackrel{\cong}{\to} L^{\otimes 2}$. Then $f$ is flat by \cite[Theorem 23.1 p.179]{Matsumura_CRT} and lci \cite[Tag 0E9K]{stacks-project}. We thus have defined $\deg f$.
\end{ex}

This degree commutes with base change. Let \begin{equation}\label{basechangeff'}\xymatrix{ X' \ar[r]^{g'} \ar[d]^{f'}& X \ar[d]^f\\ Y'\ar[r]^g & Y }\end{equation} be a pullback diagram with $f$ a finite, flat, local complete intersection morphism oriented by $\rho$. If $g$ is flat, then $f'$ is automatically a local complete intersection morphism by \cite[Tag 069I]{stacks-project}. However, in our discussion of base change, we will not assume $g$ to be flat, and instead assume that $f'$ is a local complete intersection morphism. Since $f$ is flat, the square \eqref{basechangeff'} and \cite[\href{https://stacks.math.columbia.edu/tag/08QQ}{08QQ}]{stacks-project} define a canonical isomorphism \begin{equation}\label{basechangeomegaf'ispullbackf} \omega_{f'} \cong (g')^*\omega_{f}.\end{equation} Therefore $(g')^*\rho$ determines an orientation of $f'$.

\begin{pr}\label{pr:degree_finite_commutes_basechange}
Let \eqref{basechangeff'} be a pullback square such that $f$ is a finite, flat, local complete intersection morphism oriented by $\rho$. Suppose that  $f'$ is a local complete intersection morphism, and that $X$,$Y$,$X'$, and $Y'$ are locally Noetherian. Then we have the equality in $\sGW(Y')$ $$\deg(f', (g')^*\rho) = g^* \deg(f, \rho).$$
\end{pr}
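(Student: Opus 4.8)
The plan is to reduce the statement to the compatibility of the three building blocks of Definition~\ref{GSDdegreedef} --- the multiplication $f_*L\otimes f_*L\to f_*L^{\otimes 2}$, the isomorphism $f_*(\rho^{-1})$, and the trace $\Tr_f$ --- with the base change $g^*$, and then to check each separately. Concretely, I would first record that $g^*\deg(f,\rho)$ is, by construction, the bilinear form on $g^*f_*L$ obtained by applying $g^*$ to the composite in Definition~\ref{GSDdegreedef}. Since $f$ is finite and flat, $f_*L$ is locally free, and flat base change gives a canonical isomorphism $g^*f_*L\cong f'_*(g')^*L$ (and likewise $g^*f_*L^{\otimes 2}\cong f'_*(g')^*L^{\otimes 2}$, $g^*f_*\omega_f\cong f'_*\omega_{f'}$ using \eqref{basechangeomegaf'ispullbackf}). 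So it suffices to identify, under these isomorphisms, each arrow in the $g^*$-pulled-back composite with the corresponding arrow defining $\deg(f',(g')^*\rho)$.

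The three steps I would carry out, in order, are as follows. \textbf{(1)} The multiplication map: $g^*$ of $f_*L\otimes f_*L\to f_*L^{\otimes 2}$ agrees with $f'_*(g')^*L\otimes f'_*(g')^*L\to f'_*(g')^*L^{\otimes 2}$ --- this is formal, since the $f_*$-multiplication is induced by the $\calO_X$-module multiplication $L\otimes L\to L^{\otimes 2}$ and $(g')^*$ is monoidal. \textbf{(2)} The orientation step: by definition $(g')^*\rho$ is the composite $\omega_{f'}\xrightarrow{\eqref{basechangeomegaf'ispullbackf}}(g')^*\omega_f\xrightarrow{(g')^*\rho}(g')^*L^{\otimes 2}$, so this compatibility is essentially the definition of the pulled-back orientation together with naturality of \eqref{basechangeomegaf'ispullbackf}; I would cite the stacks project tag already invoked for \eqref{basechangeomegaf'ispullbackf} to know this isomorphism is the canonical one compatible with the duality identifications. \textbf{(3)} The trace: this is the crux. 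I need that the Grothendieck--Serre trace $\Tr_f\colon f_*\Hom_{\calO_Y}(\calO_X,\calO_Y)\to\calO_Y$ is compatible with base change, i.e.\ $g^*\Tr_f$ is identified with $\Tr_{f'}$ under the canonical isomorphisms $g^*f_*\Hom_{\calO_Y}(\calO_X,\calO_Y)\cong f'_*\Hom_{\calO_{Y'}}(\calO_{X'},\calO_{Y'})$ (for $f$ finite flat this follows from flat base change for $\mathcal{H}om$ of a finitely presented module) and $g^*f^!\calO_Y\cong (f')^!\calO_{Y'}$. I would invoke base-change for the dualizing complex / upper-shriek for finite flat (or more generally proper lci) morphisms --- this is standard Grothendieck--Serre duality, available in \cite{HartshorneRD} and, in the form used in \cite[Appendix~A]{bachmann_wickelgren}, already referenced above; the key point is that under the description of $\Tr_f$ as "evaluation at $1$" of $\Hom_{\calO_Y}(\calO_X,\calO_Y)$, the identification $g^*f_*\Hom(\calO_X,\calO_Y)\cong f'_*\Hom(\calO_{X'},\calO_{Y'})$ sends (the pullback of) $1\mapsto 1$, so evaluation-at-$1$ is manifestly preserved.

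Assembling (1)--(3): applying $g^*$ to the composite defining $\deg(f,\rho)$ and matching arrow by arrow with the composite defining $\deg(f',(g')^*\rho)$ yields the claimed equality of bilinear forms $g^*f_*L\otimes g^*f_*L\to\calO_{Y'}$, hence equality of classes in $\sGW(Y')$. (One should note the statement is an equality of actual bilinear forms up to canonical isomorphism, which a fortiori gives equality of Grothendieck--Witt classes; the hypothesis that $f'$ is lci is used precisely so that $\omega_{f'}$ and hence $\deg(f',(g')^*\rho)$ are defined, and \eqref{basechangeomegaf'ispullbackf} makes sense without assuming $g$ flat.) I expect the main obstacle to be step (3): pinning down that the particular trace map used here --- "evaluation at $1$ under $\omega_f\cong\Hom_{\calO_Y}(\calO_X,\calO_Y)$" --- is literally the one to which the standard base-change statements for $f^!$ apply, i.e.\ chasing that the web of canonical isomorphisms ($\omega_f\cong f^!\calO_Y\cong\Hom_{\calO_Y}(\calO_X,\calO_Y)$ and their primed versions) is compatible with $g^*$; once that bookkeeping is done, everything else is formal. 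If a fully self-contained argument is wanted, for the finite flat case one can instead give a direct local computation: Zariski-locally $Y=\Spec R$, $X=\Spec B$ with $B$ a finite flat $R$-algebra, $f^!\calO_Y$ corresponds to $\Hom_R(B,R)$ with its evaluation-at-$1$ map, and base change along $R\to R'$ is the obvious $\Hom_R(B,R)\otimes_R R'\cong\Hom_{R'}(B\otimes_R R',R')$, under which $1\otimes 1\mapsto 1$; this makes (3) transparent and avoids invoking the full duality machinery.
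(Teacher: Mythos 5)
Your proposal is correct and takes essentially the same route as the paper: the paper's proof likewise matches the composite defining $\deg(f,\rho)$ arrow-by-arrow with that defining $\deg(f',(g')^*\rho)$ via the base-change isomorphisms on $f_*L^{\otimes 2}$, $f_*\omega_f$ and \eqref{basechangeomegaf'ispullbackf}, and then cites the base-change compatibility of the Grothendieck--Serre trace (Stacks Project Tag 0B6J) for your step (3), which is exactly the local ``evaluation at $1$'' computation you sketch. One small precision: since $g$ is not assumed flat, the identification $g^*f_*L\cong f'_*(g')^*L$ should be justified by the fact that $f$ is finite (pushforward along affine morphisms commutes with arbitrary base change, and $f_*L$ is locally free since $f$ is finite flat), rather than by ``flat base change''.
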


\begin{proof}
Let $L$ denote the line bundle on $X$ associated to the orientation $\rho$, i.e., the orientation of $f$ is the isomorphism $\rho: L^{\otimes 2} \to \omega_f$. The natural map from cohomology and base change and the isomorphism \eqref{basechangeomegaf'ispullbackf} determine the commutative diagram $$\xymatrix{ g^* f_* L^{\otimes 2} \ar[r]^{\cong} \ar[d]_{g^* f_* \rho} &f'_* (g')^* L^{\otimes 2} \ar[d]^{f'_* (g')^*\rho} \ar[r]^{\cong}& \ar[d]^{f'_* (g')^* \rho} f'_* ((g')^* L)^{\otimes 2} \\ g^* f_* \omega_f \ar[r]^{\cong} & f'_* (g')^*\omega_f \ar[r]^{\cong}& f'_* \omega_{f'}},$$ where the horizontal morphisms are isomorphisms. The claim follows by the commutativity \cite[0B6J]{stacks-project} of $$\xymatrix{ g^* f_* \omega_f \ar[r] \ar[d]_{g^*\Tr_f} &  f'_* \omega_{f'}\ar[d]^{\Tr_{f'}}\\ g^* \calO_X \ar[r]^{\cong} & \calO_{X'}}$$
\end{proof}

\subsection{Degree of a map oriented away from codimension \texorpdfstring{$2$}{two}}\label{subsection:Globaldegreemapsmpro}
We use a theorem of Ojanguren and Panin to relax the orientation hypothesis required for the definition of the degree in Section \ref{subsection:degree_finite_map}, obtaining a degree  for a map oriented away from codimension $2$ (see Definition \ref{df:or_away_codim_c}), which is a section of the Grothendieck--Witt sheaf.

Let $k$ be a field. The Grothendieck--Witt sheaf $\sGW$ is a sheaf on smooth $k$-schemes with the Nisnevich topology which can be defined as the sheafification of the functor sending $Y$ to the group completion of the semi-ring of isomorphism classes of locally free sheaves $V$ on $Y$ equipped with a non-degenerate symmetric bilinear form $V \times V \to \calO_Y$. It has a construction given in \cite[Chapter 3]{morel}. The sheaf $\sGW$ is unramified by the theorem of Ojanguren and Panin \cite{PaninOjanguren}. In particular, suppose $U \subseteq Y$ is an open subset of $Y$ with complement of codimension at least $2$. Then a locally free sheaf on $U$ equipped with a symmetric, non-degenerate bilinear form determines an element of $\sGW(Y)$. (A complete definition of an unramified sheaf is in \cite[Chapter 2, Def 2.1, Remark 2.4]{morel}.) 

For this reason, we may throw away codimension $2$ closed subsets. This allows us to both replace the assumption on the existence of an orientation of $f: X \to Y$ with the assumption that an appropriate restriction has an orientation, as well as obtain restrictions of $f$ which are finite and flat, under assumptions of generic finiteness or generic \'etaleness. (We will then apply Definition~\ref{GSDdegreedef} to obtain our degree.)

A map $f: X \to Y$ between integral schemes is said to be {\em generically finite} if $f$ is dominant (meaning its set-theoretic image is dense) and the associated extension of function fields is finite. For example, if the differential $df$ of a map $f$ between connected, smooth $n$-dimensional $k$-schemes is injective at one point, then $f$ is generically finite. If $X$ and $Y$ have more than one connected components, which are all integral, say that $f$ is generically finite if $f$ is dominant, only finitely many components of $X$ map to each component of $Y$, and for each component of $X$, its function field is a finite extension of the function field of the component of $Y$ containing its image. We include in the definition of $f$ being generically finite that the connected components of $X$ and $Y$ are integral.  If $f: X \to Y$ is generically finite, there is a dense open $U \subset Y$ such that $f^{-1}(U) \to U$ is finite by \cite[Tag 02NX]{stacks-project}. We include a proof of the following well-known proposition for completeness. 

\begin{pr}\label{nonfinite_locus_is_codim2}
Let $Y$ be a smooth scheme over a field $k$ or discrete valuation ring $\Lambda$. Let $f: X \to Y$ be a proper, generically finite map. Then there exists a codimension $2$ subset $Z$ of $Y$ such that $f$ is finite and flat when pulled back to the complement of $Z$.
\end{pr}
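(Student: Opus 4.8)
The plan is to deduce the statement from two separate claims: that $f$ is \emph{finite} over the complement of a closed subset $Z_0\subseteq Y$ of codimension at least $2$, and that this finite map is moreover \emph{flat} over the complement of a further closed subset $Z_1$ of codimension at least $2$; the desired $Z$ is then $Z_0\cup\overline{Z_1}$. At the outset I would reduce to the case $Y$ integral by working over each connected component of $Y$ separately — these are open and closed, and integral by our conventions — and decompose $X$ accordingly into its connected components $X=\coprod_i X_i$, each integral of dimension $n=\dim Y$ and, by the definition of generic finiteness, mapping dominantly to $Y$ with $k(X_i)/k(Y)$ a finite extension.

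\emph{Finiteness.} Let $V=\{x\in X: \dim_x f^{-1}(f(x))\ge 1\}$, which is closed in $X$ by upper semicontinuity of fibre dimension, and put $Z_0:=f(V)$, which is closed in $Y$ since $f$ is proper. Over $Y\setminus Z_0$ the morphism $f$ has finite fibres, hence is quasi-finite, and a proper quasi-finite morphism is finite; so it suffices to show $Z_0$ contains no point of $Y$ of codimension $\le 1$. The generic point is excluded because the generic fibre of each $X_i\to Y$ is the spectrum of the finite $k(Y)$-algebra with function field $k(X_i)$, hence zero-dimensional. For $y\in Y$ of codimension $1$ the local ring $R:=\mathcal O_{Y,y}$ is a discrete valuation ring since $Y$ is smooth; the base change $(X_i)_R:=X_i\times_Y\Spec R$ is integral, being a localization of $X_i$, and of finite type over $R$, so the dimension formula (valid since $R$ is universally catenary) gives $\dim (X_i)_R=\dim R+\operatorname{trdeg}_{k(Y)}k(X_i)=1$. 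The scheme-theoretic fibre $f^{-1}(y)\cap X_i$ is cut out in $(X_i)_R$ by a uniformizer of $R$, a nonzero non-unit, so by Krull's principal ideal theorem it has dimension at most $\dim(X_i)_R-1=0$. Taking the union over the finitely many $i$ shows $\dim f^{-1}(y)=0$, so $y\notin Z_0$ and $\codim(Z_0,Y)\ge 2$.

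\emph{Flatness.} Set $Y^{\circ}=Y\setminus Z_0$ and let $f^{\circ}:X^{\circ}\to Y^{\circ}$ be the restriction, now finite; then $f^{\circ}_*\mathcal O_{X^{\circ}}$ is coherent, its non-locally-free locus $Z_1\subseteq Y^{\circ}$ is closed, and $f^{\circ}$ is flat exactly over $Y^{\circ}\setminus Z_1$. It remains to see that $Z_1$ contains no point of $Y$ of codimension $\le 1$ (all such points lie in $Y^{\circ}$). At the generic point the stalk of $f^{\circ}_*\mathcal O$ is $\prod_i k(X_i)$, a finite $k(Y)$-vector space, hence free. At $y$ of codimension $1$, with $R=\mathcal O_{Y,y}$ a discrete valuation ring, the stalk is $\bigoplus_i \Gamma((X_i)_R,\mathcal O)$; each summand is a finite $R$-module by finiteness of $f^{\circ}$, and injects into $k(X_i)$ by restriction to the generic fibre since $(X_i)_R$ is integral, hence is $R$-torsion-free, hence finite free over the discrete valuation ring $R$. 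So the stalk is free and $y\notin Z_1$. Setting $Z=Z_0\cup\overline{Z_1}$ finishes the proof.

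I expect the codimension-$1$ points to be the only real obstacle, and the single place where smoothness of $Y$ — used just as regularity in codimension $1$, i.e.\ that local rings at codimension-$1$ points are discrete valuation rings — enters essentially. The two facts that make that analysis run are that each component of $X$ is integral and dominates $Y$, so that base change to a discrete valuation ring stays integral and torsion-free, and the dimension formula identifying the fibre dimension there as $0$. Everything else (semicontinuity of fibre dimension, ``proper $+$ quasi-finite $\Rightarrow$ finite'', closedness of the non-locally-free locus of a coherent sheaf, and ``finite $+$ locally free pushforward $\Leftrightarrow$ flat'') is standard and I would simply cite the Stacks Project.
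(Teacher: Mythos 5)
Your argument is correct, but it is organized differently from the paper's. You prove finiteness first and flatness second: you bound the fibre dimension over codimension $\le 1$ points of $Y$ via the dimension formula for $(X_i)_R \to \Spec R$ (with $R$ the DVR at a codimension-$1$ point) together with the fact that the special fibre is cut out by a nonzero element of an integral scheme of dimension $\le 1$, push the positive-dimensional-fibre locus forward by properness, and invoke ``proper $+$ quasi-finite $\Rightarrow$ finite''; then you get flatness from freeness of the coherent pushforward at codimension $\le 1$ points (finite torsion-free over a DVR is free), using closedness of the non-locally-free locus. The paper runs the other way: it first proves $f$ is flat at every point lying over a codimension-$1$ point, directly from torsion-freeness of $\calO_{X,x}$ over the DVR $\calO_{Y,y}$ (no finiteness of the module needed), then shows by a specialization argument using properness that the set of $y$ over which $f$ is fibrewise flat is open, and finally deduces finiteness over that open set from equidimensionality of fibres of flat morphisms plus ``proper with finite fibres is finite''. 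Both proofs rest on the same essential inputs --- regularity of $Y$ in codimension $1$, integrality of the components of $X$, and properness --- but your route trades the paper's openness-of-the-flat-locus argument and the equidimensionality statement for upper semicontinuity of fibre dimension and the dimension formula, which arguably makes the codimension-$2$ bound more transparent, while the paper's route gets flatness with less machinery and obtains finiteness as a byproduct of it. One small attribution quibble: in your finiteness step, Krull's Hauptidealsatz (height $\le 1$) is not what you need; what you use is that $\pi\neq 0$ in the domain, so $V(\pi)$ is a proper closed subset and each of its components has codimension $\ge 1$, whence dimension $\le \dim (X_i)_R-1\le 0$. This does not affect correctness.
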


\begin{proof}
For any point $x$ of $X$, the map $\calO_{Y,f(x)} \to \calO_{X,x}$ is injective because $f$ is dominant. Let $x$ be such that $y=f(x)$ is codimension $1$ in $Y$. Since $y$ is codimension $1$ and $Y$ is smooth, the ring $\calO_{Y,y}$ is a discrete valuation ring. Since the connected components of $X$ are integral (this is part of the definition of $f$ being generically finite for us), $\calO_{X,x}$ is an integral domain. In particular, $\calO_{X,x}$ is torsion free as a $\calO_{Y,f(x)}$-module, and it follows that $f$ is flat at $x$ because $\calO_{Y,y}$ is a principle ideal domain.

Let $U$ be the subset of points $y$ of $Y$ such that $f$ is flat at all the points $x$ in $f^{-1}(y)$. We claim that $U$ is open. Suppose $y_0$ specializes to $y_1$ in $Y$ and that $y_1$ is in $U$. Let $x_0$ be a point of $f^{-1}(y_0)$. Let $\overline{x_0}$ denote the closure of $x_0$. Since $f$ is proper, $f(\overline{x_0})$ is closed. $f(\overline{x_0})$ contains $y_0$ and therefore $y_1$ by construction. Thus we can choose $x_1$ such that $x_0$ specializes to $x_1$ and $f(x_1) = y_1$. We thus have a flat extension $\calO_{Y,y_1} \subseteq \calO_{X,x_1}$ and ideals $p_{x_0}$ and $p_{y_0}$ in $\calO_{X,x_1}$ and $\calO_{Y,y_1}$ respectively such that $p_{x_0} \cap\calO_{Y,y_1} = p_{y_0} $. Since localization is flat, it follows that $\calO_{Y,y_0} \subseteq \calO_{X,x_0}$ is flat and $U$ is open as claimed.

By the above, $U$ contains all the points of codimension $1$, whence its complement $Z$ is closed of codimension at least $2$. Let $f^0$ denote the pullback of $f$ to $f^{-1}(U)$. Since proper maps are stable under base change, $f^0$ is proper. $f^0$ is flat by construction. Thus the fibers are equidimensional \cite[Theorem 15.1]{Matsumura_CRT} and \cite[Tag 02JB]{stacks-project}. Since $f$ is generically finite, the fibers of $f^0$ are of dimension $0$ and therefore finite. Thus $f^0$ is a proper map with finite fibers and therefore finite \cite[Tag 02OG]{stacks-project}.
\end{proof}

\begin{df}\label{df:or_away_codim_c}
Let $B$ be the spectrum of a field $k$ or discrete valuation ring $\Lambda$. Let $f\colon X \to Y$ be a $B$-map from an Artin stack $X$ to a smooth $B$-scheme $Y$. $f$ is said to be {\em orientable away from codimension $c$} if \begin{itemize}
\item there exists a dense open subset $U \subseteq Y$  such that $Y -U$ has codimension $\geq c$,
\item $f^{-1}(U)$ is a scheme with integral connected components,
\item the restriction $f\vert_{f^{-1}(U)}\colon f^{-1}(U) \to U$ of $f$ is a generically finite, proper, local complete intersection morphism,
\item and there is a line bundle $L$ on $f^{-1}(U)$ together with an isomorphism $$\rho\colon \omega_{f \vert_{f^{-1}(U)}}\stackrel{\cong}{\to}  L^{\otimes 2},$$ orienting $f\vert_{f^{-1}(U)}\colon f^{-1}(U) \to U$.
\end{itemize} An {\em orientation of $f$ away from codimension $c$} is a choice of $(U,L,\rho)$ as above.  
\end{df}

\begin{rmk}\label{fassumption}\label{Rmk:degree_for_f_fassumption}
If  $f\vert_{f^{-1}(U)}\colon f^{-1}(U) \to U$ is generically \'etale, then $f\vert_{f^{-1}(U)}$ is generically finite by \cite[Th\'eor\`eme 16.6.1(c')]{egaIV_4}. When we give an orientation away from codimension $c$ as in Definition~\ref{df:or_away_codim_c}, we will often show that $f\vert_{f^{-1}(U)}$ is generically \'etale to show $f\vert_{f^{-1}(U)}$ is generically finite.

\end{rmk}

\begin{pr}\label{pr:deg(f)f_non_proper_in_sGW(Y)}\label{fassumption_nonproper}

Suppose $k$ is a field, and $f: X \to Y$ is a $k$-morphism from an Artin stack $X$ to a smooth $k$-scheme $Y$, oriented away from codimension $2$ by $(U,L,\rho)$. Then there is a unique 
\[
\deg f \in \sGW(Y)
\] compatible with Definition~\ref{GSDdegreedef} in the following sense: For any open subset $U' \subset U$ with complement of codimension greater than or equal to $2$ such that $f \vert_{f^{-1}(U')}$ is a finite, flat, local complete intersection morphism, the image of $\deg f$ under the restriction map $\sGW(Y) \to \sGW(U')$ is $\deg(f \vert_{f^{-1}(U')}, \rho|_{f^{-1}(U')})$ as defined in Definition~\ref{GSDdegreedef}. Moreover, there exists such an open set~$U'.$
\end{pr}

\begin{proof}
By Proposition~\ref{nonfinite_locus_is_codim2}, there exists an open subset $U' \subset U$ with complement of codimension greater than or equal to $2$ such that $f \vert_{f^{-1}(U')}$ is finite and flat. Note that $f \vert_{f^{-1}(U')}$ is also a proper, local complete intersection morphism, oriented by the restriction of $\rho$. We define $\deg(f \vert_{f^{-1}(U')}, \rho)$ in $\GW(U')$ with Definition~\ref{GSDdegreedef}. Since $\sGW$ is unramified and the complement of $U' \subseteq Y$ is codimension greater than or equal to $2$, there is a unique $\deg(f,\rho, U')$ in $\sGW(Y)$ mapping to $\deg(f \vert_{f^{-1}(U')}, \rho)$ under the restriction. Given a second choice $U''$ for the open subset $U'$, we have that $U'' \cap U' \subseteq U'$ and $\deg(f,\rho, U') \mapsto \deg(f,\rho, U'' \cap U')$ by construction. Since the restriction map $\sGW(Y) \to \sGW(U' \cap U'')$ is an isomorphism, we have that $\deg(f,\rho, U') = \deg(f,\rho, U'')$, so we may define $\deg(f, \rho) : = \deg(f,\rho, U')$ in $\sGW(Y)$ and this definition is independent of the choice of $U'$.
\end{proof}

\begin{ex}
Let $f: X \to Y$ be a generically finite map where $Y$ is a proper, smooth $n$-dimensional $k$-scheme, and $X$ is a regular, proper $k$-scheme of dimension $n$. Let $U$ be an open subset of $Y$ such that $Y-U$ has codimension at least two. $f$ is proper because $X$ and $Y$ are, whence the base change $f\vert_{f^{-1}(U)}$ is as well, so in particular, $f\vert_{f^{-1}(U)}$ is finite type. The map $f\vert_{f^{-1}(U)}: f^{-1}(U) \to U$ is a local complete intersection morphism because it is a finite type map between regular schemes  \cite[Tag 0E9K]{stacks-project}, so it makes sense to speak of a relative orientation of $f\vert_{f^{-1}(U)}$. A relative orientation of $f\vert_{f^{-1}(U)}$ then defines the degree $\deg f$ in $\sGW(Y)$.

\end{ex}

\begin{ex}
Let $X$ be a geometrically normal, proper scheme over $k$ of dimension $n$. Let $f: X \to Y$ be a generically finite map where $Y$ is a proper, smooth $n$-dimensional $k$-scheme. The assumption that $X$ is geometrically normal implies that $X/k$ is smooth at codimension $1$ points  \cite[Tags 031S, 038X]{stacks-project}. Since the set of points of $X$ where $X \to \Spec k$ is smooth is open, contains the points of codimension $1$, and $X$ is regular at any point where $X/k$ is smooth, there is necessarily an open subset $U\subset Y$ such that $Y-U$ has codimension greater than or equal to $2$ and $f^{-1}(U)$ is regular. An orientation of the pullback of $f$ to $f^{-1}(U)$ thus defines $\deg f$ in $\sGW(Y)$.
\end{ex}

\begin{df}\label{df:deg(f)f_non_proper_in_sGW(Y)}\label{df:deg(f)_in_sGW(Y)}
For $f: X \to Y$ oriented away from codimension $2$, define $\deg f$ in $\sGW(Y)$ by Proposition \ref{fassumption_nonproper}. If we wish to make the choice of orientation away from codimension $2$ explicit, we write $\deg(f,\rho)$ or $\deg(f,U,L,\rho)$ for $\deg f$.
\end{df}

\begin{df}\label{df:degf(y)}
The degree $\deg (f,\rho)(y)$ of $f$ at a point $y$ of $Y$ is defined to be the pullback of $\deg f$ along $y: \Spec k(y) \to Y$, so $\deg (f,\rho)(y)$ in $\GW(k(y))$. When the relative orientation is clear from context, we also write $\deg f(y)$.
\end{df}

\subsection{Degree of a pseudo-oriented map}
\label{subsection:deg_chap_p_lifting_data}

We define a degree for a map oriented away from codimension $1$ in the presence of additional data, which we define to be a pseudo-orientation. See Definition \ref{df:pseudo-oriented}.

\begin{df}\label{df:equiv_or_away_codim1}
Two orientations $(U_1,L_1,\rho_1)$ and $(U_2,L_2,\rho_2)$ of $f$ away from codimension 1 are said to be {\em equivalent} if there is a dense open subset $U$ of $U_1 \cap U_2$ and an isomorphism $\psi: L_1\vert_{f^{-1}(U_1)} \to L_2\vert_{f^{-1}(U_2)}$ such that $\psi^{\otimes 2} \rho_1 = \rho_2$.
\end{df}

\begin{df}\label{df:lifting_data}
Let $k$ be a field and let $f\colon X \to Y$ be a $k$-map from an Artin stack $X$ to a smooth $k$-scheme $Y$. 
Let $(U,L,\rho)$ be an orientation of $f$ away from codimension 1. {\em Lifting data} for $(f, U,L,\rho)$  is as follows:\begin{itemize} 
\item a discrete valuation ring $\Lambda$ with residue field $k$, 
\item a lifting of $Y \to \Spec k$ to a smooth, finite type, morphism $\calY\to \Spec \Lambda$, 
\item and an open subset $\calU\subset \calY$ such that $\calU \cap Y$ is dense in $U$ and the intersection of the complement $\calY-\calU$ with the generic fiber is codimension $\geq 2$. 
\end{itemize}
Lifting data furthermore includes:
\begin{itemize}
\item a lifting of the map $f \vert_{f^{-1}(\calU \cap U)}: f^{-1}(\calU \cap U) \to \calU \cap U$ to a proper, generically finite, local complete intersection morphism $\mathfrak{f}\colon \calX \to \calU$ of $\Lambda$-schemes. \item a lift of $L\vert_{f^{-1}(\calU \cap U)}$ to a line bundle $\calL$ on $\calX$ 
\item and a lift of $\rho\vert_{f^{-1}(\calU \cap U)}$ to an isomorphism $\rhol: \omega_{\mathfrak{f}}\stackrel{\cong}{\to} \calL^{\otimes 2}$.
\end{itemize}
\end{df}

\begin{df}\label{df:pseudo-oriented}
Let $k$ be a field and let $f\colon X \to Y$ be a $k$-map from an Artin stack $X$ to a smooth $k$-scheme $Y$. A {\em pseudo-orientation} of $f$ is an equivalence class of orientations away from codimension 1 such that there exists a lifting datum for a representative.
\end{df}

\begin{ex}\label{ex:or_away_codim_2_is_pseudo-orientation}
    Let $f\colon X \to Y$ be a map over a field $k$ from an Artin stack $X$ to a smooth $k$-scheme $Y$. An orientation $(U,L,\rho)$ of $f$ away from codimension $2$ as in Definition~\ref{df:or_away_codim_c} determines a canonical pseudo-orientation: let $\Lambda = k[[t]]$, and let $(\calY, \calU, \mathfrak{f}, \calL, \rhol)$ be the pullback of $(Y,U,f,L,\rho)$.  
\end{ex}

Note that a pseudo-orientation for a map $f$ does not include the lifting data; such lifting data only has to exist.

\begin{setting}\label{fassumption_charpnonproper} 
Let $k$ be a field. Let $f\colon X \to Y$ be a pseudo-oriented map from an Artin stack $X$ to a smooth $k$-scheme $Y$.
\end{setting}

In this section, we define $\deg f \in \sGW(Y)$ in Setting~\ref{fassumption_charpnonproper}. To do this, we use further purity results.

 For $A$ a commutative ring, we let $W(A) = \GW(A)/M$ denote the Witt group, defined to be the group completion of the isomorphism classes of non-degenerate symmetric bilinear forms over $A$, modulo the ideal of metabolic forms. See for example \cite{milnor73}. In place of constructing an unramified sheaf $\sGW$ on smooth schemes over a discrete valuation ring, we use the following results on the Witt and Grothendieck--Witt groups to extend certain sections of $\sGW(U)$ to a (necessarily unique) section of $\sGW(Y)$ for $U \subset Y$ dense.

\begin{df} Let $\calO$ be a regular local ring with quotient field $K$. Let $\Spec\calO^{(1)}$ be the set of height one prime ideals of $\calO$. For $P\in \Spec\calO^{(1)}$, let $\calO_P\subset K$ denote the localization of $\calO$ at $P$. For $A\subset K$ a subring, let $\overline{W}(A)$ be the image of $W(A)$ in $W(K)$.  We say that {\em purity holds for $W(\calO)$} if
\[
\overline{W}(\calO)=\cap_{P\in \Spec\calO^{(1)}}\overline{W}(\calO_P)
\] Similarly, let $\overline{\GW}$ denote the image of $\GW(A)$ in $\GW(K)$, and say that {\em purity holds for $\GW(\calO)$} if
\[
\overline{\GW}(\calO)=\cap_{P\in \Spec\calO^{(1)}}\overline{\GW}(\calO_P).
\] 
\end{df}

\begin{tm}[Colliot-Th\'el\`ene and Sansuc \hbox{\cite[Corollaire 2.5]{CT-S}}] \label{thm:CTS} Let $\calO$ be a regular ring of dimension $\le 2$ containing $1/2$. Then purity holds for $W(\calO)$.
\end{tm}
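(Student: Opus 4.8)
The plan is to reduce at once to the essential case $\dim\calO=2$. If $\dim\calO=0$ then $\calO=K$ and $\Spec\calO^{(1)}=\emptyset$, so both sides of the asserted equality are $W(K)$; if $\dim\calO=1$ then $\calO$ is a discrete valuation ring with unique height one prime $\mathfrak m$ and $\calO_{\mathfrak m}=\calO$, so the equality is again immediate. Note also that $\calO\subseteq\calO_P$ forces $\bar W(\calO)\subseteq\cap_P\bar W(\calO_P)$ for free, so only the reverse inclusion needs proof. The first ingredient is the \emph{second residue exact sequence}: for a discrete valuation ring $(A,\pi)$ with residue field $\kappa$ and $1/2\in A$, the map $W(A)\to W(\operatorname{Frac}A)$ is injective with image $\ker\bigl(\partial_\pi\colon W(\operatorname{Frac}A)\to W(\kappa)\bigr)$, the second residue homomorphism, whose kernel is independent of the chosen uniformizer. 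Applying this to each localization $\calO_P$ identifies $\cap_{P\in\Spec\calO^{(1)}}\bar W(\calO_P)$ with $\ker\bigl(\partial\colon W(K)\to\bigoplus_{P\in\Spec\calO^{(1)}}W(\kappa(P))\bigr)$, so the theorem becomes the assertion that the Gersten--Witt complex of $\calO$ is exact at the term $W(K)$.

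Assume now $\dim\calO=2$ and fix $\xi\in\ker\partial$. Represent $\xi$ by a diagonal form; its finitely many entries are divisible by only finitely many prime elements of the unique factorization domain $\calO$, so $\xi$ is automatically unramified at all but finitely many height one primes. Choose a prime element $\pi$; then $\calO[1/\pi]$ is a regular domain of dimension one, and after localizing it at the remaining relevant primes a semilocal principal ideal domain, over which the dimension-one case (classical, via the second residue sequence together with the fact that quadratic spaces glue over a Dedekind domain) produces a quadratic space $q_1$ with $[q_1\otimes K]=\xi$; and unramifiedness at $(\pi)$ gives a quadratic space $q_2$ over the discrete valuation ring $\calO_{(\pi)}$ with $[q_2\otimes K]=\xi$. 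Since two spaces over $K$ with the same Witt class become isometric after adding hyperbolic planes — harmless, as $\mathbb{H}$ is defined over any ring with $1/2$ — we may arrange $q_1\otimes K\cong q_2\otimes K$. It remains to glue $q_1$ and $q_2$: using that $(\pi)$ is a principal Cartier divisor with complement $\Spec\calO[1/\pi]$ and that $\calO[1/\pi]\cap\calO_{(\pi)}=\calO$, one passes to the $\pi$-adic completion $\hat\calO$ of $\calO$ (a two-dimensional regular domain with $\hat\calO/\pi\cong\calO/\pi$, over which $q_2$ spreads out) and invokes a Beauville--Laszlo type patching, now for quadratic spaces rather than bare modules, to glue $q_1$ over $\calO[1/\pi]$ with the base change of $q_2$ over $\hat\calO$ — these agree over $\hat\calO[1/\pi]$ because $q_1\otimes K\cong q_2\otimes K$ — into a single quadratic space $q$ over $\calO$. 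Then $[q\otimes K]=\xi$, i.e.\ $\xi\in\bar W(\calO)$, finishing the proof. (A more hands-on alternative, closer to the original argument, avoids patching altogether: induct on the rank of an anisotropic representative, using that $\partial_\pi\xi=0$ forces the leading coefficients at $\pi$ of a diagonalization of $\xi$ to span a metabolic, hence hyperbolic, form over $\operatorname{Frac}(\calO/\pi)$, then lift that hyperbolicity relation over $K$ to trade away a factor of $\pi$, and iterate over the finitely many prime divisors of the chosen coefficients.)

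The hard part is the glueing. One must check that $\hat\calO$ really is a two-dimensional regular domain in which $\xi$ remains unramified along $(\pi)$, so that $q_2$ spreads out over it, and — more delicately — that the Beauville--Laszlo descent can be performed \emph{compatibly with the symmetric bilinear pairings}, so that the glued object $q$ is a nondegenerate quadratic space and not merely a projective module. In the inductive alternative, the corresponding obstacle is to lift the hyperbolicity relation among leading coefficients from $\operatorname{Frac}(\calO/\pi)$ back to $K$ without introducing new ramification at the other height one primes of $\calO$; managing this for all relevant prime elements simultaneously is precisely the technical heart of the Colliot-Th\'el\`ene--Sansuc argument, and it is there that the hypothesis $\dim\calO\le 2$ is essential.
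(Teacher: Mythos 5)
First, note that the paper does not prove this statement at all: Theorem~\ref{thm:CTS} is quoted as a black box with the citation to Colliot-Th\'el\`ene--Sansuc, so there is no internal proof to compare against; what can be assessed is whether your argument would stand on its own, and as written it does not. Your reductions are fine (dimensions $0,1$ are trivial, only the inclusion $\cap_P\bar\W(\calO_P)\subseteq\bar\W(\calO)$ needs proof, the second residue sequence identifies the right-hand side with the kernel of the Gersten--Witt map, and $\calO[1/\pi]$ is indeed a Dedekind domain to which the Knebusch-type dimension-one purity applies, giving $q_1$). The genuine gap is the glueing step. The Beauville--Laszlo/formal-glueing square for the divisor $(\pi)$ consists of $\calO$, $\calO[1/\pi]$, the $\pi$-adic completion $\hat\calO$, and $\hat\calO[1/\pi]$; to use it you must produce a quadratic space over $\hat\calO$ together with an isometry with $q_1$ over $\hat\calO[1/\pi]$. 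But there is no ring homomorphism $\calO_{(\pi)}\to\hat\calO$ (an element of $\mathfrak{m}\setminus(\pi)$ is a non-unit of $\hat\calO$, since it is a non-unit in $\hat\calO/\pi\cong\calO/\pi$), so $q_2$ does not ``spread out'' over $\hat\calO$ by base change; and the isometry $q_1\otimes K\cong q_2\otimes K$ lives over $K$, which does not map to $\hat\calO[1/\pi]$ either, so it furnishes no descent datum. Producing the $\hat\calO$-side object is an extension-across-the-divisor problem of essentially the same nature as the theorem itself, and your own closing paragraph defers exactly this point (``the technical heart of the Colliot-Th\'el\`ene--Sansuc argument''), so the proof is incomplete where it matters; by contrast, the compatibility of formal glueing with the bilinear pairings, which you flag as the delicate point, is the routine part.

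For the record, the standard proof runs differently and shows where $\dim\calO\le 2$ really enters: after stabilizing a representative of $\xi$ by enough hyperbolic planes so that at each of the finitely many ``bad'' height-one primes the form itself (not just its Witt class) admits a unimodular $\calO_P$-lattice, one patches these lattices into a unimodular quadratic space over the punctured spectrum $\Spec\calO\setminus\{\mathfrak{m}\}$ (a one-dimensional regular scheme), and then extends across the closed point: the pushforward is reflexive, reflexive modules over a two-dimensional regular local ring are free, the pairing extends because $\calO=\cap_P\calO_P$, and nondegeneracy extends because the Gram determinant is a unit at every height-one prime. That final extension step --- the actual use of the dimension hypothesis --- has no counterpart in your sketch unless the $\hat\calO$-side data are genuinely constructed, which is precisely what is missing.
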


\begin{lm}\label{lm:GWpurity}
Let $\calO$ be a regular ring of dimension $\le 2$ containing $1/2$. Then purity holds for $\GW(\calO)$. 
\end{lm}

\begin{proof}
Let $K$ denote the quotient field of $\calO$. Consider $q$ in $\cap_{P\in \Spec\calO^{(1)}}\overline{\GW}(\calO_P)$. By Theorem~\ref{thm:CTS}, the image $\overline{q}$ of $q$ in $W(K)$ lies in $\overline{W}(\calO)$. Since $\GW(\calO) \to W(\calO)$ is surjective, there is $\tilde{q}$ in $\GW(\calO)$ with image $\overline{q}$. 

For a local ring containing $1/2$, the metabolic forms in the Grothendieck--Witt group are the same as the ideal generated by the hyperbolic form $\langle 1 \rangle + \langle -1 \rangle $ \cite[Chapter 1, 6.3]{milnor73}. It follows that the difference between $q$ and the image of $\tilde{q}$ is $n(\langle 1 \rangle + \langle -1 \rangle)\in \GW(K)$ for some $n \in \mathbb{Z}$. Thus $\tilde{q} + n(\langle 1 \rangle + \langle -1 \rangle)\in \GW(\calO)$ has image $q$, showing
\[
\overline{\GW}(\calO)\supset \cap_{P\in \Spec\calO^{(1)}}\overline{\GW}(\calO_P).
\] which implies purity. 
\end{proof}

\begin{tm}[Knebusch \hbox{\cite[Satz 11.1.1]{Knebusch}}]\label{thm:Knebusch}  
Let $\calO$ be a Dedekind domain with $1/2\in \calO$. Let $K$ be the quotient field of $\calO$. Then the map $W(\calO)\to W(K)$ is injective.
\end{tm}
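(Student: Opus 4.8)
The statement is classical, and I would prove its two assertions separately: the injectivity of $W(\mathcal{O}) \to W(K)$ by a Lagrangian-lifting argument valid over an arbitrary Dedekind domain, and purity by reducing to the discrete valuation case via the second residue homomorphisms together with a patching argument. Throughout I assume $\dim \mathcal{O} = 1$, the case of a field being vacuous, and I use $1/2 \in \mathcal{O}$ to identify symmetric bilinear forms with quadratic forms.

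\emph{Injectivity.} A class in $\ker\big(W(\mathcal{O}) \to W(K)\big)$ is represented by a nondegenerate symmetric bilinear space $(M,b)$, where $M$ is a finitely generated torsion-free — hence projective — $\mathcal{O}$-module, such that $(M,b)\otimes_{\mathcal{O}} K$ is metabolic, i.e.\ admits a Lagrangian $V = V^{\perp} \subseteq M\otimes_{\mathcal{O}}K$. Set $L := V \cap M$. Then $L$ is saturated in $M$, hence a direct summand since $\mathcal{O}$ is Dedekind; the image of $b(L,L) \subseteq \mathcal{O}$ in $K$ lies in $b(V,V) = 0$, so (as $\mathcal{O}\hookrightarrow K$) $b(L,L)=0$ and $L \subseteq L^{\perp}$; and since $b$ is nondegenerate and $L$ is a direct summand, flat base change gives $L^{\perp}\otimes_{\mathcal{O}}K = (L\otimes_{\mathcal{O}}K)^{\perp} = V^{\perp} = V$, while $L^{\perp}$ is itself saturated in $M$, whence $L^{\perp} = (L^{\perp}\otimes_{\mathcal{O}}K)\cap M = V \cap M = L$. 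Thus $L$ is a Lagrangian for $(M,b)$ over $\mathcal{O}$, so $(M,b)$ is metabolic and its class in $W(\mathcal{O})$ is zero. Applied to a discrete valuation ring $\mathcal{O}_P$ this already yields $W(\mathcal{O}_P) \hookrightarrow W(K)$, hence $\bar{W}(\mathcal{O}_P) = W(\mathcal{O}_P)$.

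\emph{Purity.} For each maximal ideal $P$, the ring $\mathcal{O}_P$ is a DVR containing $1/2$, and there is a split short exact sequence $0 \to W(\mathcal{O}_P) \to W(K) \xrightarrow{\partial_P} W(\kappa(P)) \to 0$, where $\partial_P$ is the second residue map attached to a uniformizer; its kernel is independent of the uniformizer (see \cite{milnor73}). Hence $\cap_{P}\bar{W}(\mathcal{O}_P) = \ker\big(\partial := \bigoplus_P \partial_P \colon W(K) \to \bigoplus_P W(\kappa(P))\big)$, and it remains to show $\bar{W}(\mathcal{O}) = \ker\partial$. The inclusion $\bar{W}(\mathcal{O}) \subseteq \ker\partial$ is formal. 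For the converse, given $q \in \ker\partial$ choose a diagonal representative with entries in $\mathcal{O}$: for all but finitely many $P$ this Gram matrix is already unimodular over $\mathcal{O}_P$, while at each of the remaining finitely many $P$ the equality $\partial_P q = 0$ produces a unimodular model $q_P$ over $\mathcal{O}_P$ with $q_P\otimes_{\mathcal{O}_P}K \cong q$ in $W(K)$. A patching argument over the Dedekind domain $\mathcal{O}$ then glues these finitely many local lattices with a fixed generic model into a nondegenerate symmetric bilinear space over $\mathcal{O}$ whose Witt class is $q$; for this step I would invoke Knebusch's construction directly \cite{Knebusch}, equivalently the exactness at $W(K)$ of the residue complex $0 \to W(\mathcal{O}) \to W(K) \to \bigoplus_P W(\kappa(P))$.

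The main obstacle is precisely this last patching step: making rigorous that finitely many local unimodular forms together with the generic form glue to a single global form over $\mathcal{O}$, and that the resulting form is nondegenerate with the correct Witt class. Rather than re-derive this, I would treat the DVR second residue sequence and the local-global description of lattices over a Dedekind domain as black boxes (\cite{milnor73}, \cite{Knebusch}); everything else — the Lagrangian-lifting argument for injectivity and the bookkeeping reducing purity to the residue complex — is routine.
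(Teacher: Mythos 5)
The paper does not prove this statement at all: it is quoted verbatim from Knebusch (\cite[Satz 11.1.1]{Knebusch}) and used as a black box, so there is no internal proof to compare against. Measured against that, your proposal actually does more than the paper in one direction: the Lagrangian-lifting argument for injectivity of $W(\mathcal{O})\to W(K)$ is complete and correct (saturation of $L=V\cap M$ and of $L^{\perp}$, flat base change for $(-)^{\perp}$, and $L_K=V$ are all fine, and over a field with $1/2$ a Witt-trivial nondegenerate form is indeed metabolic, so the representative admits a Lagrangian over $K$).

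For purity, however, your argument is not a proof but a reformulation: after using the local sequences to identify $\cap_P \bar{W}(\mathcal{O}_P)$ with $\ker\partial$, the remaining claim $\ker\partial\subseteq\bar{W}(\mathcal{O})$ \emph{is} the exactness at $W(K)$ of the residue complex, which is exactly the content of Knebusch's Satz that you then cite — so the hard half is deferred to the very result being proved. That is consistent with the paper's own treatment (pure citation), but as a self-contained argument the gap sits precisely at the patching step, and one concrete issue is visible in your sketch: at a bad prime $P$ you only get a unimodular $\mathcal{O}_P$-form whose generic fiber is \emph{Witt-equivalent} to $q$, not isometric to a fixed representative, whereas gluing lattices over a Dedekind domain requires all local lattices to sit inside one and the same $K$-quadratic space (e.g.\ one passes to the anisotropic kernel of $q$ and shows it supports unimodular $\mathcal{O}_P$-lattices wherever $\partial_P q=0$, then modifies a global lattice at the finitely many bad primes). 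Also, the local sequence $0\to W(\mathcal{O}_P)\to W(K)\to W(\kappa(P))\to 0$ is exact for any DVR containing $1/2$, but calling it split is an overstatement in the non-complete case (splitting is Springer's theorem for complete DVRs); only the kernel description is needed, so this does not affect your argument. See \cite{milnor73} for the local sequence and \cite{Knebusch} for the global patching.
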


\begin{lm}\label{lm:GWDVRinjectK}
Let $\calO$ be a local, Dedekind domain with $1/2\in \calO$. Let $K$ be the quotient field of $\calO$. Then the map $\GW(\calO)\to \GW(K)$ is injective.
\end{lm}

\begin{proof}
Suppose $q$ in $\GW(\calO)$ lies in the kernel of $\GW(\calO)\to \GW(K)$. Then $\rank q = 0$. By Theorem~\ref{thm:Knebusch}, we also have that $q$ lies in the kernel of $\GW(\calO)\to W(\calO)$. Thus $q$ is rank $0$ and $q$ lies in the ideal generated by metabolic forms. By \cite[Chapter 1, 6.3]{milnor73}, the metabolic forms in the Grothendieck--Witt ring of a local ring containing $1/2$ are the same as the ideal generated by the hyperbolic form $\langle 1 \rangle + \langle -1 \rangle $. By \cite[Theorem 1.3]{RogersSchlichtingGW}, rank $1$ forms generate $\GW(\calO)$ and  $$\langle 1 \rangle + \langle -1 \rangle =( \langle 1 \rangle + \langle -1 \rangle) \langle r \rangle $$ for any $r$ in $\calO$. This uses that $\calO$ is local. It follows that the ideal in $\GW(\calO)$ generated by the hyperbolic form is the same as the subgroup generated by the hyperbolic form. Thus an element of this ideal of rank $0$ is $0$.
\end{proof}

\begin{rmk}
Further results on purity for Witt groups can be found in for example \cite{PaninOjanguren} \cite{BalmerGillePaninWalter-Gersten} \cite{BalmerWalter-Gersten}.
\end{rmk}

As above, let $\sGW$ denote the unramified sheaf on smooth $k$-schemes with $k$ a field.

\begin{pr}\label{pr:GWExtension}
Let $\Lambda$ be a discrete valuation ring with residue field $k$ of characteristic $\neq 2$. Let $\pi:\calY\to \Spec \Lambda$ be a smooth morphism of finite type, and let $\calU\subset \calY$ be an open subscheme satisfying the properties that \begin{itemize}
\item the intersection $\calU_k$ with the closed fiber $\calY_k$ is dense in $\calY_k$,
\item and the intersection of the complement $\calY-\calU$ with the general fiber is codimension $\geq 2$.
\end{itemize} Let $q: \calE \times \calE \to \calO_{\calU}$ be a symmetric nondegenerate bilinear form over $\calU$. Then the restriction of  $q$ to $\sGW(\calU_k)$ extends uniquely to a section in $\sGW(\calY_k)$.
\end{pr}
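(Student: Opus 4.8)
The plan is to reduce the statement to the two–dimensional purity theorem of Colliot-Th\'el\`ene--Sansuc (Theorem~\ref{thm:CTS}) applied to the regular total space $\calY$, together with the fact that $\sGW$ is an unramified sheaf on smooth $k$-schemes \cite[Ch.~2]{morel}. Uniqueness is immediate: since $\calU_k$ is dense in $\calY_k$ and $\sGW$ is unramified, the restriction $\sGW(\calY_k)\to\sGW(\calU_k)$ is injective. Write $\bar q\in\sGW(\calU_k)$ for the section determined by the form $q|_{\calU_k}$ on the smooth $k$-scheme $\calU_k$; since $\calY_k/k$ is smooth, its connected components are integral, and we may argue one component of $\calY_k$ at a time. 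Because $\sGW$ is unramified, to extend $\bar q$ over $\calY_k$ it suffices to show that the class of $\bar q$ in $\GW(\kappa(\calY_k))$ is killed by the residue map $\partial_y$ at every codimension-one point $y$ of $\calY_k$; this is automatic when $y\in\calU_k$, so fix a codimension-one point $y\in\calY_k\smallsetminus\calU_k$.

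The first real step is to observe that $\calY\smallsetminus\calU$ has codimension $\ge 2$ in $\calY$. Since $\pi$ is smooth and $\Lambda$ is a regular one-dimensional local ring, $\calY$ is regular; the closed fiber $\calY_k$ is the zero scheme of a uniformizer $t$ of $\Lambda$, hence a Cartier divisor of pure codimension $1$, while the generic fiber $\calY_K$ (with $K=\operatorname{Frac}\Lambda$) is open in $\calY$ and $\calY=\calY_K\sqcup\calY_k$ as a set. Thus the part of $\calY\smallsetminus\calU$ in the generic fiber is bounded by the hypothesis $\codim(\calY_K\smallsetminus\calU_K,\calY_K)\ge 2$, hence is codimension $\ge 2$ in $\calY$, and the part in the closed fiber is, by density of $\calU_k$, a proper closed subset of $\calУ_k$, hence codimension $\ge 1$ in $\calY_k$ and so $\ge 2$ in $\calY$. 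Consequently every codimension-one point $x$ of $\calY$ lies in $\calU$, so $q$ pulls back to an honest nondegenerate symmetric bilinear form over the discrete valuation ring $\calO_{\calY,x}$; writing $\bar{\GW}(A)$ for the image of $\GW(A)$ in $\GW(\kappa(\calY))$ (as in the definition preceding Theorem~\ref{thm:CTS}, for $\GW$ in place of $\W$), we get that the class of $q$ lies in $\bar{\GW}(\calO_{\calY,x})$ for every such $x$.

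Next I would localize at $y$. As $y$ has codimension $1$ in $\calY_k$ and $\calY_k$ has codimension $1$ in $\calY$, the point $y$ has codimension $2$ in $\calY$, so $\calO_{\calY,y}$ is a regular local ring of dimension $2$; it contains $1/2$ because $\kappa(y)$ extends $k$ and $\Char k\neq 2$. Its height-one primes are exactly the codimension-one points of $\calY$ specializing to $y$, and by the previous paragraph $q$ lies in $\bar{\GW}(\calO_{\calY,y,P})$ for each of them. Applying Theorem~\ref{thm:CTS}, upgraded from $\W$ to $\GW$ by Remark~\ref{rem:WGW}, yields
\[
q\ \in\ \bar{\GW}(\calO_{\calY,y})\ =\ \bigcap_{P\in\Spec\calO_{\calY,y}^{(1)}}\bar{\GW}(\calO_{\calY,y,P}),
\]
so $q$ is the image of a class $\tilde q\in\GW(\calO_{\calY,y})$. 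To conclude I would specialize $\tilde q$ to the closed fiber: the height-one prime $(t)$ of $\calO_{\calY,y}$ corresponds to the generic point $\eta$ of the component of $\calY_k$ through $y$, which lies in $\calU_k$ by density, $\calO_{\calY,y}/(t)=\calO_{\calY_k,y}$ is a discrete valuation ring with fraction field $\kappa(\calY_k)$, and $\calO_{\calY,\eta}$ is a discrete valuation ring with residue field $\kappa(\calY_k)$. Using that $\GW$ of a discrete valuation ring containing $1/2$ injects into $\GW$ of its fraction field (Theorem~\ref{thm:Knebusch}, Remark~\ref{rem:WGW}), the restriction of $\tilde q$ to $\calO_{\calY,\eta}$ is the honest form $q|_{\calO_{\calY,\eta}}$; reducing modulo $t$ recovers $q|_{\calY_k}$ at $\eta$, i.e.\ the class of $\bar q$, while via the commuting square of maps $\calO_{\calY,y}\to\calO_{\calY,\eta}\to\kappa(\calY_k)$ and $\calO_{\calY,y}\to\calO_{\calY_k,y}\to\kappa(\calY_k)$ it also equals the image of $\tilde q\bmod t\in\GW(\calO_{\calY_k,y})$. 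Hence the class of $\bar q$ lies in $\bar{\GW}(\calO_{\calY_k,y})$, so $\partial_y\bar q=0$; as $y$ was arbitrary, $\bar q$ extends to $\sGW(\calY_k)$.

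I expect the main obstacle to be bookkeeping rather than a conceptual difficulty: establishing that $\calY\smallsetminus\calU$ has codimension $\ge 2$ in the \emph{total} space from the two separate fiberwise hypotheses, and verifying in the last step that the mod-$t$ reduction of the class $\tilde q$ on the $2$-dimensional local ring is genuinely the section $\bar q$ at the generic point of the closed fiber. Once these points are in place the result follows formally from Theorems~\ref{thm:CTS} and~\ref{thm:Knebusch}, Remark~\ref{rem:WGW}, and the unramifiedness of $\sGW$.
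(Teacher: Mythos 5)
Your proof is correct and follows essentially the same route as the paper: apply Colliot-Th\'el\`ene--Sansuc purity (via Remark~\ref{rem:WGW}) to the two-dimensional regular local ring at a codimension-one point of $\calY_k$, use Knebusch's injectivity at the discrete valuation ring at the generic point of the closed fiber to identify the extended class with $q$, specialize to the closed fiber, and conclude by unramifiedness of $\sGW$. The only difference is that you make explicit two points the paper leaves implicit — that $\calY\setminus\calU$ has codimension $\ge 2$ in the total space (so all height-one localizations carry $q$) and the commuting square behind the mod-$t$ specialization — which is a welcome amplification rather than a deviation.
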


\begin{proof} Let $x$ be a codimension one point of $\calY_k$, which we consider as a codimension two point of $\calY$,  and let $\calO=\calO_{\calY, x}$. Let $\bar{\eta}$ be a generic point of $\calU_k$ in the connected component containing $x$. Let $L$ be the field of rational functions on $\calY$, and let $K$ be the ring  of rational functions on $\calY_k$. Since $\calU_k$ is dense in $\calY_k$, $K$ is also the ring of rational functions on $\calU_k$.  Since  $\calO$ is a regular ring of dimension two, and $\calU$ contains all points of $\calY$ of codimension~$1$, it follows from Lemma~\ref{lm:GWpurity} that $q$ is in the image of   $\GW(\calO)$ in $\GW(L)$.  Moreover, by Lemma~\ref{lm:GWDVRinjectK}, the map $\GW(\calO_{\calY, \bar{\eta}})\to \GW(L)$ is injective, so the restriction of $q$ to $\GW(\calO_{\calY, \bar{\eta}})$ is in the image of $\GW(\calO)\to \GW(\calO_{\calY, \bar{\eta}})$. Restricting to the fiber over $\Spec k$,  this implies that the image $\bar{q}$ of $q$ in $\GW(K)$ is in the image of $\GW(\calO_{\calY_k, x})$. Since $x$ was an arbitrary codimension one point of $\calY_k$, it follows that $\bar{q}$ extends uniquely to a section of $\sGW$ over $\calY_k$ because $\sGW$ is unramified.
\end{proof}

\begin{tm}\label{tm:A1Degree_pseudo-oriented}
Let $k$ be a field. Let $f\colon X \to Y$ be a map from an Artin stack $X$ to a smooth $k$-scheme $Y$ equipped with a pseudo-orientation. Then there is a unique section 
\[
\deg f \in \sGW(Y)
\] compatible with the pseudo-orientation in the following sense. The pseudo-orientation includes an equivalence class of orientations of $f$ away from codimension $1$. For any representative $(U,L,\rho)$ of the this equivalence class, the restriction of $\deg f$ to $U$ is the $\mathbb{A}^1$-degree of $f\vert_{f^{-1}(U)}: f^{-1}(U) \to U$ in the sense of Definition~\ref{df:deg(f)_in_sGW(Y)}.
\end{tm}

\begin{proof}
Because $f$ is pseudo-oriented, we may choose a representative $(U,L,\rho)$ of the equivalence class of orientation of $f$ away from codimension $1$ for which there exists lifting data 
\[
(\Lambda, \calU \subset \calY, \mathfrak{f}\colon \calX \to \calU, \calL, \tilde{\rho}: \omega_{\mathfrak{f}}\stackrel{\cong}{\to} \calL^{\otimes 2}).
\] Since $\mathfrak{f}$ is a generically finite, proper morphism, we can find an open dense subset $\calU'$ of $\calU$ such that the restriction of $\mathfrak{f}$ to $\mathfrak{f}^{-1}(\calU')$ is finite and flat by Proposition~\ref{nonfinite_locus_is_codim2}. Then $\mathfrak{f}\vert_{\mathfrak{f}^{-1}(\calU')}$ is a finite, flat, lci morphism oriented by $\tilde{\rho}$. Let $\deg(\mathfrak{f}\vert_{\mathfrak{f}^{-1}(\calU')}, \tilde{\rho})$ be the bilinear form on $\calU'$ of Definition~\ref{GSDdegreedef}, which is symmetric and non-degenerate by Proposition~\ref{pr:GSDdegreedef-well-defined}. By Proposition~\ref{pr:GWExtension}, the restriction of $\deg(\mathfrak{f}\vert_{\mathfrak{f}^{-1}(\calU')}, \tilde{\rho})$ to $\sGW(\calU'_k)$ extends to a unique section of $\sGW(Y)$, which we call $\deg f$. This establishes the uniqueness of $\deg f$. 

It remains to show that for any other representative $(U_2,L_2, \rho_2)$ of the orientation away from codimension $1$, we have that the restriction of $\deg f$ to $U_2$ is $\deg(f: f^{-1}(U_2) \to U_2 ,\rho_2)$. Because $(U_2,L_2, \rho_2)$ and $(U,L,\rho)$ are equivalent orientations away from codimension $1$, there is a dense open subset $W \subset U \cap U_2$ and an isomorphism $\psi: L\vert_{f^{-1}(W)} \to L_2\vert_{f^{-1}(W)}$ such that $\psi^{\otimes 2} \rho = \rho_2$. The isomorphism $\psi$ determines an isomorphism between the bilinear forms $\deg (f: f^{-1}(W) \to W ,\rho)$  and $\deg (f: f^{-1}(W) \to W ,\rho_2)$ of Definition~\ref{GSDdegreedef}. Since the restriction of $\deg f$ to $W$ is $\deg (f: f^{-1}(W) \to W ,\rho)$ and $\sGW(U_2) \to \sGW(W)$ is injective, the claim follows.

\end{proof}

\begin{df}\label{df:A1_deg_charp}
For $f: X \to Y$ as in Setting~\ref{fassumption_charpnonproper}, define the $\Aone$-degree, $\deg(f)$ in $\sGW(Y)$, by Theorem~\ref{tm:A1Degree_pseudo-oriented}.
\end{df}

\begin{pr}\label{pr:deg-pseudo-or-comp-pullback}
    Let $k$ be a field and let 
    \[
    \xymatrix{ X' \ar[r]^{g'} \ar[d]_{f'}& X \ar[d]^f\\ Y'\ar[r]^g & Y }
    \] be a pullback square where $f$ and $f'$ are pseudo-oriented maps over $k$ from Artin stacks to smooth $k$-schemes such that the pullback of the orientation of $f$ away from codimension $1$ is equivalent to the orientation of $f'$ away from codimension $1$. Then we have the equality in $\sGW(Y')$ $$\deg(f') = g^* \deg(f).$$
\end{pr}

\begin{proof}
    Since $\sGW$ is unramified, it suffices to show the claim after restriction to a dense open of $Y'$. This follows from Propositions~\ref{pr:degree_finite_commutes_basechange} and \ref{nonfinite_locus_is_codim2}. 
\end{proof}

\subsection{\texorpdfstring{$\GW(\pi^{\mathbb{A}^1}_0(Y))$}{Section}-valued global degree}\label{subsection:GWk-valued_global_degree}

We discuss several notions of connectivity on a smooth $k$-scheme $Y$ ($\bbb{A}^1$-connectivity, $\bbb{A}^1$-chain connectivity, $\bbb{A}^1$-odd chain connectivity) allowing one to associate an element of $\GW(k)$ to a section of $\sGW(Y)$. The notions of $\bbb{A}^1$-connectivity and $\bbb{A}^1$-chain connectivity are well-studied within the $\bbb{A}^1$-homotopy theory communities and the resulting Proposition~\ref{pr:htpyinvtsheaf_iso_Xtopi0} and Lemma~\ref{lm:phi_iso_A1_invt_Nis_sheaf_sets} are well-known. We include proofs for completeness. The notion of $\bbb{A}^1$-odd chain connectivity (Definition~\ref{df:A1oddextended_chain_connected}) is possible to check concretely in examples, so we include Lemma~\ref{lm:A1oddextendedchainconnected_inv_property} as well.

Our interest in connectivity arises as in algebraic topology. The degree of a map $M \to N$ between smooth, oriented, compact $n$-dimensional manifolds is an integer when $N$ is connected. Without assuming that $N$ is connected, the degree can be viewed as an integer valued function on the connected components $\pi_0(N)$ of $N$.  The purpose of this section is to provide the analogous results in our algebraic setting. For example, the $\GW(Y)$-valued degree of Definition \ref{df:deg(f)f_non_proper_in_sGW(Y)} (or that of Definition \ref{GSDdegreedef} or \ref{df:deg(f)_in_sGW(Y)}) is pulled back from a unique element of the Grothendieck--Witt group $\GW(k)$ of $k$ when $Y$ is appropriately connected in an algebraic sense, for example, when $Y$ is $\mathbb{A}^1$-connected. More generally, this degree is a section of $\GW(\pi^{\mathbb{A}^1}_0(Y))$, where $\pi^{\mathbb{A}^1}_0(Y)$ denotes the sheaf of $\bbb{A}^1$-connected components. We give the needed definitions and notations.

For smooth $k$-schemes, or more generally, for simplicial presheaves on smooth $k$-schemes, $X$ and $Y$, let $[X,Y]_{\bbb{A}^1}$ denote the set of $\bbb{A}^1$-homotopy classes of maps from $X$ to $Y$ \cite{morelvoevodsky1998}. Let $\pi_0^{\mathbb{A}^1}(X)$ denote the Nisnevich-sheafification of the presheaf taking a smooth $k$-scheme $U$ to $[U,X]_{\bbb{A}^1}$. There is a canonical map
\[
\phi_X: X \to \pi_0^{\mathbb{A}^1}(X).
\] For example,  $\phi_k : \Spec k \to \pi_0^{\mathbb{A}^1}(\Spec k) $ is an isomorphism \cite[Definition 2.1.4]{AsokMorel}.

\begin{df}
$X$ is {\em $\bbb{A}^1$-connected} if the canonical map $\pi_0^{\mathbb{A}^1}(X) \to \pi_0^{\mathbb{A}^1}(\Spec k) \stackrel{\phi_k}{\cong}\Spec k$ is an isomorphism.
\end{df}

\begin{ex}\label{ex:A1-connected} $\bbb{A}^1$-connected smooth schemes include the following:
\begin{enumerate}
\item \label{ex:covered_by_affines_implies_A1_connected}
\cite[Lemma 2.2.11, Lemma 2.2.5]{AsokMorel} If $X$ is a smooth $k$-variety that is covered by finitely many affine spaces $\bbb{A}^n_k$, whose pairwise intersections all contain a $k$-point, then $X$ is $\bbb{A}^1$-connected.
\item  Asok and Morel \cite[Corollary 2.3.7]{AsokMorel} show that a smooth proper surface over $k$ which is rational over $k$ is $\A^1$-connected.
\item 
\cite[Example 1.33, 1.35]{Kollar-Rational_and_nearly} A smooth cubic surface is rational over $k$ if it contains two skew lines over $k$ or two conjugate skew lines defined over $k(\sqrt{a})$ for some degree $2$ extension $k \subset k(\sqrt{a})$. It then follows \cite[Exercise 1.34, Example 1.35]{Kollar-Rational_and_nearly} that \[ x^2y+y^2z+z^2w + w^2 x = 0\] and \[x^3+y^3+z^3+w^3=0\] determine $\A^1$-connected smooth cubic surfaces over fields of characteristic not $2$ or $3$.
\end{enumerate}
\end{ex}

\begin{rmk}
\cite[Example 2.1.6]{AsokMorel} If $X$ is a smooth $\bbb{A}^1$-connected $k$-scheme, then $X$ has a rational point. For example, $\Spec L$ is not an $\bbb{A}^1$-connected $k$-scheme for $k \subset L$ a finite separable extension.
\end{rmk}

A Nisnevich sheaf $\calF$ of sets is said to be {\em $\mathbb{A}^1$-homotopy invariant} if the projection $U \times \bbb{A}^1 \to U$ induces bijection $ \calF(U) \to \calF(U \times \bbb{A}^1)$ for all smooth schemes $U$. For a smooth $k$-scheme $X$, the map $\phi_X$ induces the map $$\phi_X^*: \Hom(\pi_0^{\mathbb{A}^1}(X), \calF) \to \Hom(X, \calF),$$ where, in this expression, $\Hom$ denotes the set of maps of presheaves on smooth $k$-schemes. The following proposition is known to experts, but we include a proof for completeness.

\begin{pr}\label{pr:htpyinvtsheaf_iso_Xtopi0}\label{pr:A1connected_implies_GW=GW(k)}
Let $\calF$ be an $\mathbb{A}^1$-homotopy invariant Nisnevich sheaf of sets and let $X$ be a smooth $k$-scheme. Then \begin{enumerate}
\item\label{lm:htpyinvtsheaf_iso_Xtopi0:gen} $\phi_X^*$ is a bijection.
\item \label{lm:htpyinvtsheaf_iso_Xtopi0:XA1conn} If in addition $X$ is $\mathbb{A}^1$-connected, then the canonical map $\calF(k) \to \calF(X)$ is an isomorphism.
\end{enumerate}

The claims \eqref{lm:htpyinvtsheaf_iso_Xtopi0:gen} and \eqref{lm:htpyinvtsheaf_iso_Xtopi0:XA1conn} hold for $\calF = \sGW$.
\end{pr}

\begin{proof}
The sheaf $\sGW$ is $\mathbb{A}^1$-homotopy invariant by \cite[Ch 2 and 3, e.g. Theorem 3.37]{morel} so it suffices to prove \eqref{lm:htpyinvtsheaf_iso_Xtopi0:gen} and \eqref{lm:htpyinvtsheaf_iso_Xtopi0:XA1conn} for $\calF$ an $\mathbb{A}^1$-homotopy invariant Nisnevich sheaf of sets.

The canonical commutative triangle
\[
\xymatrix{ X \ar[rr]^{\phi_X} \ar[rd] && \ar[ld] \pi_0^{\mathbb{A}^1}(X)\\
& \Spec k &}
\] gives rise to the commutative triangle
\[
\xymatrix{\calF(X) && \ar[ll]^{\phi_X^*} \Hom( \pi_0^{\mathbb{A}^1}(X),\calF) \\
& \calF(\Spec k) \ar[ur] \ar[ul] &}
\]
Since $X$ is $\mathbb{A}^1$-connected, $\pi_0^{\mathbb{A}^1}(X) \to \Spec k$ is an isomorphism (of sheaves of sets). Thus we have that \eqref{lm:htpyinvtsheaf_iso_Xtopi0:gen} implies \eqref{lm:htpyinvtsheaf_iso_Xtopi0:XA1conn}.

For \eqref{lm:htpyinvtsheaf_iso_Xtopi0:gen}, it follows from \cite[Corollary 3.22]{morelvoevodsky1998} that the map $\phi_X$ is an epimorphism of Nisnevich sheaves of sets. (To use \cite[Corollary 3.22]{morelvoevodsky1998}, let  $I = \mathbb{A}^1$, $\calX = X$, and $\calX'$ be the $\mathbb{A}^1$-localization of $X$.) Thus $\phi_X^*$ is injective.

For $\alpha: X \to \calF$, the natural transformations $\phi$ and $\pi_0^{\mathbb{A}^1}$ give the commutative diagram
\[
\xymatrix{ X \ar[d]_{\phi_X}\ar[r]^{\alpha} & \calF \ar[d]^{\phi_{\calF}}\\
\pi_0^{\mathbb{A}^1}(X) \ar[r]_{\pi_0^{\mathbb{A}^1}(\alpha)} & \pi_0^{\mathbb{A}^1}(\calF)}
\]
Since $\calF$ is an $\mathbb{A}^1$-homotopy invariant sheaf of sets, $\phi_{\calF}:\calF \to \pi_0^{\mathbb{A}^1}(\calF)$ is an isomorphism of Nisnevich sheaves of sets (Lemma \ref{lm:phi_iso_A1_invt_Nis_sheaf_sets}).Then $\alpha = \phi_X^*(\phi_{\calF}^{-1} \circ \pi_0^{\mathbb{A}^1}(\alpha)) $. Thus $\phi_X^*$ is surjective.
\end{proof}

\begin{lm}\label{lm:phi_iso_A1_invt_Nis_sheaf_sets}
 Let $\calF$ be an $\mathbb{A}^1$-homotopy invariant Nisnevich sheaf of sets. Then $\phi_{\calF}:\calF \to \pi_0^{\mathbb{A}^1}(\calF)$ is an isomorphism of Nisnevich sheaves of sets.
\end{lm}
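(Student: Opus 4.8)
The plan is to identify $\pi_0^{\mathbb{A}^1}(\calF)$ with $\calF$ itself by showing that $\calF$, regarded as a simplicially constant presheaf, is already $\mathbb{A}^1$-local. Granting this, for every smooth $k$-scheme $U$ the set $[U,\calF]_{\mathbb{A}^1}$ is canonically $\calF(U)$; hence the presheaf $U\mapsto[U,\calF]_{\mathbb{A}^1}$ \emph{is} the sheaf $\calF$, its Nisnevich sheafification is $\calF$, and under this identification the canonical map $\phi_{\calF}$ is the identity, so in particular an isomorphism.

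Carrying this out would proceed in two steps. First, a Nisnevich sheaf of sets, viewed as a constant simplicial presheaf of (discrete, hence Kan) simplicial sets, is fibrant for the Nisnevich-local model structure of \cite{morelvoevodsky1998}: the Brown--Gersten/Nisnevich descent condition applied to a simplicially constant object is exactly the requirement that it send elementary distinguished squares to pullback squares, which is the sheaf condition. Consequently $\Map(U,\calF)$ is the constant simplicial set on $\calF(U)$ and $\pi_0\Map(U,\calF)=\calF(U)$, naturally in $U$. Second, one computes the $\mathbb{A}^1$-localization via the singular functor $\mathrm{Sing}^{\mathbb{A}^1}_\bullet\calF=\calF(-\times\Delta^\bullet)$, where $\Delta^\bullet$ is the cosimplicial algebraic simplex: since $\calF$ is $\mathbb{A}^1$-invariant, iterating gives bijections $\calF(U\times\mathbb{A}^n)\cong\calF(U)$, and since any two maps $\Delta^m\to\Delta^n$ are linearly $\mathbb{A}^1$-homotopic (the affine combination of two maps respecting $\sum t_i=1$ again respects it), $\calF$ sends all of them to the same map; thus $\mathrm{Sing}^{\mathbb{A}^1}_\bullet\calF$ is sectionwise weakly equivalent to the constant simplicial sheaf $\calF$. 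Combined with the first step, the Morel--Voevodsky $\mathbb{A}^1$-fibrant replacement functor (interleaving $\mathrm{Sing}^{\mathbb{A}^1}_\bullet$ with Nisnevich fibrant replacement) applied to $\calF$ returns an object sectionwise weakly equivalent to $\calF$, so $[U,\calF]_{\mathbb{A}^1}=\pi_0(\calF(U))=\calF(U)$, naturally in $U$, and we conclude as above.

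The crux is the passage from the \emph{sectionwise} condition of $\mathbb{A}^1$-invariance to the \emph{homotopical} condition of $\mathbb{A}^1$-locality, i.e.\ that $\calF$ is local with respect to all $\mathbb{A}^1$-weak equivalences and not merely the single projection $\mathbb{A}^1\to\Spec k$; this is precisely where the Morel--Voevodsky machinery enters. A clean alternative to the singular-functor argument is to invoke left Bousfield localization directly: $\calF$ is already Nisnevich-fibrant by the first step, so by the universal property of the localization it is $\mathbb{A}^1$-local as soon as $\mathrm{R}\Map(U,\calF)\to\mathrm{R}\Map(U\times\mathbb{A}^1,\calF)$ is a weak equivalence for every smooth $U$; by the first step both sides are the discrete sets $\calF(U)$ and $\calF(U\times\mathbb{A}^1)$ and the map is the bijection furnished by hypothesis. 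Either route reduces the lemma to the standing $\mathbb{A}^1$-invariance of $\calF$ together with standard properties of the Morel--Voevodsky model structure; the statement is also essentially contained in \cite{AsokMorel} and \cite[Chapter~2]{morel}.
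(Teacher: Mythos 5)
Your proposal is correct and is essentially the paper's argument: both proofs show that a discrete Nisnevich sheaf is fibrant for the Nisnevich-local structure and that $\mathbb{A}^1$-invariance then forces it to be $\mathbb{A}^1$-local (the paper cites Jardine for the fibrancy and Morel--Voevodsky Proposition 3.19 for the locality, where you argue via elementary distinguished squares and the $\mathrm{Sing}$/Bousfield-locality criterion), so that $\Laone\calF(U)\simeq\calF(U)$ sectionwise and sheafifying $\pi_0$ identifies $\pi_0^{\mathbb{A}^1}(\calF)$ with $\calF$. One small caveat: sending distinguished squares to (homotopy) pullbacks is a priori descent/flasqueness rather than injective-local fibrancy, but this suffices for $\mathrm{R}\Map(U,\calF)\simeq\calF(U)$ by the Nisnevich descent theorem (and discrete sheaves are in fact injective fibrant), so nothing in your argument breaks.
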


Viewing a set as a discrete simplicial set, we have that $\calF(U) \cong \pi_0(\vert \calF(U) \vert)$ for any sheaf of sets $\calF$ and smooth $k$-scheme $U$. This allows us to replace $\calF$ with its connected components. The purpose of Lemma~\ref{lm:phi_iso_A1_invt_Nis_sheaf_sets} is to furthermore replace $\calF$ with its $\A^1$-connected components when $\calF$ $\mathbb{A}^1$-homotopy invariant. How one proves Lemma~\ref{lm:phi_iso_A1_invt_Nis_sheaf_sets} depends on how one sets up $\A^1$-homotopy theory, but we provide a proof with the necessary setup for completeness. 

\begin{proof}
The category of simplicial presheaves on smooth $k$-schemes can be given the structure of a simplicial model category with the global injective model structure, the injective local model structure with the Nisnevich topology, and the $\bbb{A}^1$-model structure. Let $\Lnis$ and $\Laone$ denote fibrant replacement functors for the injective local model structure with the Nisnevich topology and the $\bbb{A}^1$-model structure, respectively. (See for example, \cite[2.2]{AsokWickelgrenWilliams} \cite{DuggerHollanderIsaksen}.)

All sheaves, thought of as discrete simplicial sheaves, are globally fibrant \cite[pg 10 3)]{Jardine_Fields_simplicial_presheaves}. Thus $\calF$ is globally fibrant. Since a local weak equivalence of globally fibrant sheaves is a global weak equivalence, we have that the map \begin{equation}\label{eq:LnisFswwe}\calF(U) \to \Lnis \calF(U)\end{equation}  is a weak equivalence of simplicial sets for all smooth $k$-schemes $U$ \cite[pg 10 4)]{Jardine_Fields_simplicial_presheaves}. By \cite[Proposition 3.19]{morelvoevodsky1998}, it follows that $\Lnis \calF$ is $\Aone$-local (and thus fibrant in the $\Aone$-model structure). By \cite[Proposition 2.2.1]{AsokWickelgrenWilliams}, the map $\calF \to \Lnis \calF$ factors $\calF \to  \Laone \calF \to \Lnis \calF$. The map $\Laone \calF \to \Lnis \calF$ is an $\Aone$-weak equivalence by 2-out-of-3 and $\Laone \calF$ and $\Lnis \calF$ are both fibrant in the injective local model structure. Thus the map $\Laone \calF \to \Lnis \calF$ is a local whence global weak equivalence.The sheaf $\pi_0^{\mathbb{A}^1}(\calF)$ is the Nisnevich sheaf associated to the presheaf $U \mapsto \pi_0 \vert \Laone \calF (U) \vert$. Since $\Laone \calF (U) \simeq \Lnis \calF (U) \simeq \calF(U)$ and $\calF(U)$ is a set (i.e., discrete topological space), we have that the natural map $\calF (U) \to \pi_0 \vert \Laone \calF (U) \vert$ is a bijection. Since $\calF$ is a sheaf, it follows that the natural map $\phi_{\calF}: \calF \to \pi_0^{\mathbb{A}^1}(\calF)$ is an isomorphism.

\end{proof}

By Proposition~\ref{pr:htpyinvtsheaf_iso_Xtopi0}, we can refine our definition of the degree of a map with codomain $Y$ to lie in $\sGW(\pi_0^{\mathbb{A}^1}(Y))$ when $Y$ is a smooth $k$-scheme. In particular, when $Y$ is $\bbb{A}^1$-connected, the degree lies in $\GW(k)$.

There are alternate connectivity conditions in $\bbb{A}^1$-homotopy theory which also give rise to a $\GW(k)$-valued degree. The notion of $\bbb{A}^1$-chain connected varieties was defined in \cite[Definition 2.2.2]{AsokMorel} as follows. Let $L$ be a finitely generated separable extension of $k$, which is defined to mean that there exists a subextension $k \subseteq E \subseteq L$ such that $E$ is purely transcendental over $k$ and $L$ is separable and algebraic over $E$. Let $y$ and $y'$ be $L$-points of $Y$.

\begin{df} \cite[Definition 2.2.2]{AsokMorel}
An {\em elementary $\bbb{A}^1$-equivalence between $y$ and $y'$} is a map $f: \bbb{A}^1_L \to Y_L$ such that $f(t) = y$ and $f(t')=y'$ for some $t,t'$ in $\bbb{A}^1(L)$.
\end{df}

Elementary $\bbb{A}^1$-equivalence generates an equivalence relation $\sim$ on $Y(L)$. Denote the quotient $Y(L)/\sim$ by $\pi_0^{\bbb{A}^1, ch}(Y)(L)$.

\begin{df}\label{df:A1chain_connected}
$Y$ is {\em $\bbb{A}^1$-chain connected} if for every finitely generated separable field extension $L/k$, the set of equivalence classes $\pi_0^{\bbb{A}^1, ch}(Y)(L) = Y(L)/\sim$ consists of exactly one element.
\end{df}

For $Y$ a smooth, proper variety over a field $k$, it is a Theorem of A. Asok and F. Morel that $\bbb{A}^1$-chain connectedness and $\bbb{A}^1$-connectedness are equivalent \cite[Theorem 2]{AsokMorel}. More generally, there is an evident map $\psi_{Y,L}:\pi_0^{\bbb{A}^1, ch}(Y)(L)\to \pi_0^{\bbb{A}^1}(Y)(L)$ sending the class of $y\in Y(L)$ in $\pi_0^{\bbb{A}^1, ch}(Y)(L)$ to the class $[y]\in  \pi_0^{\bbb{A}^1}(Y)(L)$.  Asok and Morel show that if $Y$ is finite type and proper over $k$, then $\psi_{Y,L}$ is an isomorphism for all finitely generated separable extensions $L$ of $k$ \cite[Theorem 2.4.3]{AsokMorel}.

Since the pullback along field extensions of finite odd degree induces an injection on Grothendieck--Witt groups and we are interested in a $\GW(k)$-valued degree, we weaken the notion of $\bbb{A}^1$-chain connectedness as follows.

\begin{df}\label{df:A1oddextended_chain_connected}
$Y$ is {\em $\bbb{A}^1$-odd chain connected} if for every finitely generated separable field extension $L/k$, and every pair $y,y'$ in $Y(L)$, there exists a finite extension $L \subseteq L'$ of odd degree such that $y \sim y'$ in $Y(L')$.
\end{df}

We remark that an $n$-dimensional smooth $k$-scheme $Y$ has many closed points with separable residue field: for each $y$ in $Y$ there is an open neighborhood $U$ and an \'etale map $\phi: U \to \bbb{A}^n_k$ \cite[D\'efinition II.1.1]{sga1}. The points of $\bbb{A}^n_k$ with separable residue field are dense. The image $\phi(U)$ is open \cite[Tag 01U2]{stacks-project}, and therefore contains points with separable residue field. For all $u$ of $U$ such that $\phi(u)$ has separable residue field, $k \subseteq k(u)$ is separable.

\begin{lm}\label{lm:A1oddextendedchainconnected_inv_property}
Let $Y$ be a smooth proper $k$-scheme which is $\bbb{A}^1$-odd chain connected and assume that $Y(k) \neq \emptyset$. Then for any section $\beta$ of $\sGW(Y)$, there is a unique $b$ in $\GW(k)$, such that for every point $y: \Spec k(y) \to Y$ as in the commutative diagram $$\xymatrix{ \Spec k(y) \ar[dr]^p \ar[rr]^{y} && \ar[ld] Y \\ & \Spec k &}$$ and such that $k \subseteq k(y)$ is separable, we have $$y^* \beta = p^* b .$$

\end{lm}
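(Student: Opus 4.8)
The plan is to fix, once and for all, a $k$-rational point $y_0\in Y(k)$ (which exists by hypothesis) and to set $b:=y_0^*\beta\in\GW(k)$. Uniqueness is then immediate: for any $b'$ with the asserted property, taking $y=y_0$ (so that $p=\id_{\Spec k}$) forces $b'=p^*b'=y_0^*\beta=b$. So the real task is existence, i.e.\ to show that this $b$ satisfies $y^*\beta=p^*b$ for every point $y$ of $Y$ with $k(y)/k$ separable. Note that $k(y)/k$ is then automatically finitely generated (since $Y$ is of finite type over $k$), hence a finitely generated separable extension, so that the connectivity hypothesis on $Y$ applies to it.

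The key mechanism is that $\sGW$ is an $\mathbb{A}^1$-invariant unramified Nisnevich sheaf (\cite[Ch.~2 and 3]{morel}, \cite{PaninOjanguren}), so that over any field $E$ with $1/2\in E$ a section of $\sGW$ on $\mathbb{A}^1_E$ is constant, i.e.\ pulled back from $\sGW(E)=\GW(E)$. I would use this to prove: if $a,a'$ are two $E$-points of $Y$ connected by a finite chain of elementary $\mathbb{A}^1$-equivalences over $E$ (in the sense of the definition preceding the lemma), then $a^*\beta=(a')^*\beta$ in $\GW(E)$. Indeed, pulling $\beta$ back to $Y_E$ and then along a single elementary $\mathbb{A}^1$-equivalence $f\colon\mathbb{A}^1_E\to Y_E$ yields a section of $\sGW(\mathbb{A}^1_E)$, which by $\mathbb{A}^1$-invariance is constant; evaluating at the two points of $\mathbb{A}^1_E$ where $f$ takes the values $a$ and $a'$ shows those two pullbacks of $\beta$ agree, and one iterates along the chain. (Equivalently, via Proposition~\ref{pr:htpyinvtsheaf_iso_Xtopi0} one may first factor $\beta$ as $\bar\beta\circ\phi_Y$ through $\pi_0^{\mathbb{A}^1}(Y)$ and observe that elementary $\mathbb{A}^1$-equivalent points have the same image in $\pi_0^{\mathbb{A}^1}(Y)(E)$ — only this elementary implication is needed, not the harder Asok--Morel identification $\pi_0^{\mathbb{A}^1,ch}\xrightarrow{\sim}\pi_0^{\mathbb{A}^1}$.)

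With this in hand I would conclude as follows. Given $y$ with $L:=k(y)$ separable over $k$, apply the $\mathbb{A}^1$-odd extended chain connectedness of $Y$ to the pair of $L$-points $y$ and $(y_0)_L$ of $Y$: there is a finite extension $L'/L$ of odd degree over which $y_{L'}$ and $(y_0)_{L'}$ are connected by a chain of elementary $\mathbb{A}^1$-equivalences. By the previous paragraph, $y_{L'}^*\beta=(y_0)_{L'}^*\beta$ in $\GW(L')$. Now $y_{L'}^*\beta$ is the image of $y^*\beta\in\GW(L)$ under the restriction $\GW(L)\to\GW(L')$, while $(y_0)_{L'}^*\beta$ is the image of $b=y_0^*\beta$, hence also the image of $p^*b\in\GW(L)$, under the same restriction. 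Thus $y^*\beta$ and $p^*b$ have equal images in $\GW(L')$, and since $[L':L]$ is odd, the restriction $\GW(L)\to\GW(L')$ is injective (Springer's theorem, as invoked in the discussion preceding the definition of $\mathbb{A}^1$-odd extended chain connectedness); therefore $y^*\beta=p^*b$ in $\GW(k(y))$, as desired.

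The main obstacle I anticipate is purely foundational: the odd extension $L'/k$ need not be separable, so one must make precise the pullback of $\beta$ to $Y_{L'}$ and the $\mathbb{A}^1$-invariance of the Grothendieck--Witt sheaf over $L'$. I would handle this by working with the naive presheaf of symmetric bilinear forms on vector bundles --- which pulls back along arbitrary morphisms, is identified with the unramified sheaf $\sGW$ on regular schemes containing $1/2$ by Panin--Ojanguren and Morel, and whose $\mathbb{A}^1$-invariance $\sGW(L')\xrightarrow{\sim}\sGW(\mathbb{A}^1_{L'})$ is the classical homotopy invariance of Witt (and Grothendieck--Witt) groups of regular rings with $1/2$. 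Everything else is formal bookkeeping; in particular, properness of $Y$ is not needed beyond ensuring $Y$ is of finite type.
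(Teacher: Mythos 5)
Your proposal is correct and follows essentially the same route as the paper: fix $y_0\in Y(k)$, set $b=y_0^*\beta$, show that points connected by elementary $\bbb{A}^1$-equivalences over a field $E$ pull $\beta$ back to the same class in $\GW(E)$, then use $\bbb{A}^1$-odd extended chain connectedness together with injectivity of $\GW(L)\to\GW(L')$ for odd-degree extensions to conclude. The only (cosmetic) difference is that you invoke $\bbb{A}^1$-invariance of $\sGW$ directly on $\bbb{A}^1_E$, whereas the paper cites Corollary \ref{pr:A1connected_implies_GW=GW(k)} together with Example \ref{ex:covered_by_affines_implies_A1_connected}, which amounts to the same mechanism; your extra care about possibly inseparable odd extensions $L'$ is a reasonable refinement of a point the paper passes over.
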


\begin{proof}
Choose $y_0$ in $Y(k)$. We must let $b= y_0^* \beta$, showing uniqueness. Let $y: \Spec k(y) \to Y$ be a closed point. By Proposition \ref{pr:A1connected_implies_GW=GW(k)} and Example \ref{ex:A1-connected} \eqref{ex:covered_by_affines_implies_A1_connected}, if $i:k(y) \subseteq L'$ is an extension of fields, $y'$ in $Y(L')$ and $y' \sim (y \circ i)$, then $$i^* y^*\beta = (y')^* \beta.$$ We may view $y_0 \circ p$ as an element of $Y(k(y))$. Since $Y$ is $\bbb{A}^1$-odd chain connected, there is an extension $i:k(y) \subseteq L'$ of odd degree such that $y \circ i \sim y_0 \circ p \circ i$ in $Y(L')$. Thus $$i^* y^* \beta = i^* p^* b$$ in $\GW(L')$. An odd degree field extension induces an injection on $\GW$ \cite[Chapter~VII, Corollary~2.6]{lam05}\footnote{The reference shows the claim on Witt groups, from which the injection on $\GW$ follows from the isomorphism $\GW \cong \W \times_{\Z/2} \Z$.}, therefore $y^* \beta = p^* b$ as claimed.
\end{proof}

\begin{df}\label{df:GWk-degree-cases}
Let $f: X \to Y$ and $\rho$ be as in Remark \ref{fassumption}, Proposition~\ref{fassumption_nonproper}, or respectively Setting~\ref{fassumption_charpnonproper}. If $Y$ is additionally $\bbb{A}^1$-odd chain connected and $Y(k) \neq \emptyset$, then define the degree of $f$, denoted $\deg (f,\rho)$ or $\deg f$, to be the $b$ associated by Lemma \ref{lm:A1oddextendedchainconnected_inv_property} to the section of $\sGW(Y)$ given by the degree of $f$ defined in Definition \ref{df:deg(f)_in_sGW(Y)} or respectively Definition~\ref{df:A1_deg_charp}.
\end{df}

\subsection{Connectivity and restriction of scalars}

We will have use of the degrees of maps whose targets are restrictions of scalars. Since the $\bbb{A}^1$-degree lands in $\sGW(\pi_0^{\bbb{A}^1}(-))$ applied to the target, we give a result on the connectivity of a restriction of scalars in this section.

We extend the notation of $\bbb{A}^1$-chain connected components $\pi_0^{\bbb{A}^1, ch}(Y)(L)$ of a finite type $k$-scheme $Y$ over a field $L$ (see Definition~\ref{df:A1chain_connected}) to products of fields by defining
 \[
\pi_0^{\bbb{A}^1, ch}(Y)(\prod_{i=1}^rL_i):=\prod_{i=1}^r \pi_0^{\bbb{A}^1, ch}(Y)(L_i).
 \] Define $\psi_{Y,\prod_i L_i}: \pi_0^{\bbb{A}^1, ch}(Y)(\prod_i L_i) \to \pi_0^{\bbb{A}^1}(Y)(\prod_i L_i)$ by sending the class of an $r$-tuple of points $\prod_{i=1}^r y_i\in Y(\prod_{i=1}^r L)$ to the class $[\prod_{i=1}^r y_i]$ in $ \pi_0^{\bbb{A}^1}(Y)(\prod_i L_i)$ as above. Clearly $(Y,L)\mapsto \pi_0^{\bbb{A}^1, ch}(Y)(L)$ is (covariantly) functorial in $Y$ and $L$ and  $\psi_{Y,L}$ is natural in $(Y,L)$.

For a field $F$, let $\Sch_F$ denote the category of finite type, separated $F$-schemes.

Let $k\subset L$ be a finite  separable field extension. We have the Weil restriction functor $\Res_{L/k}:\Sch_L\to \Sch_k$, which is right adjoint to the extension of scalars functor $X\mapsto X_L:=X\times_{\Spec k}\Spec L$. Passing to the limit over suitable open subschemes induces the natural isomorphism $X(L\otimes_k F)\cong  \Res_{L/k}(X)(F)$ for all finitely generated extensions $F$ of $k$.  The points $0$ and $1$ define sections $i_0, i_1:\Spec (-)\to \bbb{A}^1_{(-)}$ which are stable under pullback. The isomorphisms $X(L\otimes_k F)\cong  \Res_{L/k}(X)(F)$ and $X(\bbb{A}^1_{L\otimes_k F})\cong  \Res_{L/k}(X)(\bbb{A}^1_F)$ induce an isomorphism, natural in $F$ and $X$
 \begin{equation}\label{Pi0ChRestriction}
 \rho_{L,X,F}:\pi_0^{\bbb{A}^1, ch}(X)(L\otimes_k F)\stackrel{\cong}{\to} \pi_0^{\bbb{A}^1, ch}(\Res_{L/k}(X))(F).
 \end{equation}

\begin{lm}\label{lm:A1Res(field)}
Let $k\subset L$ be a finite separable field extension. 
For $X$ a finite type, proper $L$-scheme and $F$ a finitely generated separable extension of $k$, the isomorphism \eqref{Pi0ChRestriction} induces an isomorphism  
\[
\pi^{\bbb{A}^1}_0(\psi_X)(L\otimes_kF):\pi_0^{\bbb{A}^1}(X)(L\otimes_kF)\stackrel{\cong}{\to} \pi_0^{\bbb{A}^1}(\Res_{L/k}(X))(F),
\]
natural in $F$ and $X$.

\end{lm}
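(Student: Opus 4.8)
The statement is the natural compatibility between $\pi_0^{\bbb{A}^1,ch}$ and $\pi_0^{\bbb{A}^1}$ for Weil restrictions, and the proof will combine three ingredients already in hand: (i) the natural isomorphism \eqref{Pi0ChRestriction} between chain-connected components of $X$ over $L\otimes_k F$ and of $\Res_{L/k}(X)$ over $F$; (ii) the Asok--Morel theorem \cite[Theorem 2.4.3]{AsokMorel}, recalled above, that $\psi_{Y,M}:\pi_0^{\bbb{A}^1,ch}(Y)(M)\to\pi_0^{\bbb{A}^1}(Y)(M)$ is an isomorphism whenever $Y$ is finite type and proper over the base field and $M$ is a finitely generated separable extension of that base field; and (iii) the fact that $\Res_{L/k}(X)$ is finite type and proper over $k$ when $X$ is finite type and proper over $L$, together with smoothness being preserved if needed. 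The idea is simply to transport the isomorphism \eqref{Pi0ChRestriction} along the $\psi$'s on both sides.

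\emph{First} I would note the subtlety in applying (ii) on the left: the base ring there is $L\otimes_k F$, which for $L/k$ finite separable and $F$ a finitely generated separable extension of $k$ is a \emph{finite product of fields} $\prod_j F_j$, each a finitely generated separable extension of $L$ (hence of $k$). This is precisely why the excerpt has just extended the notation $\pi_0^{\bbb{A}^1,ch}(Y)(\prod_i L_i):=\prod_i\pi_0^{\bbb{A}^1,ch}(Y)(L_i)$ and the map $\psi_{Y,\prod_iL_i}$ componentwise. So $\psi_{X,L\otimes_kF}:\pi_0^{\bbb{A}^1,ch}(X)(L\otimes_kF)\to\pi_0^{\bbb{A}^1}(X)(L\otimes_kF)$ is, by definition, the product over $j$ of the maps $\psi_{X,F_j}$, each of which is a bijection by \cite[Theorem 2.4.3]{AsokMorel} applied over the base field $L$ (using that $X$ is finite type and proper over $L$ and each $F_j/L$ is finitely generated separable). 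Hence $\psi_{X,L\otimes_kF}$ is a bijection. Dually, applying \cite[Theorem 2.4.3]{AsokMorel} over the base field $k$ to the finite type proper $k$-scheme $\Res_{L/k}(X)$ and the finitely generated separable extension $F/k$ gives that $\psi_{\Res_{L/k}(X),F}$ is a bijection.

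\emph{Then} I would assemble the square
\[
\xymatrix{
\pi_0^{\bbb{A}^1,ch}(X)(L\otimes_kF) \ar[r]^{\rho_{L,X,F}}_{\cong} \ar[d]_{\psi_{X,L\otimes_kF}} & \pi_0^{\bbb{A}^1,ch}(\Res_{L/k}(X))(F) \ar[d]^{\psi_{\Res_{L/k}(X),F}} \\
\pi_0^{\bbb{A}^1}(X)(L\otimes_kF) \ar[r] & \pi_0^{\bbb{A}^1}(\Res_{L/k}(X))(F)
}
\]
in which the top arrow is the isomorphism \eqref{Pi0ChRestriction} and the two vertical arrows are isomorphisms by the previous paragraph. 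Commutativity of this square \emph{defines} the bottom arrow $\pi^{\bbb{A}^1}_0(\psi_X)(L\otimes_kF)$; since the other three arrows are isomorphisms, so is the bottom one. Naturality in $F$ and $X$ of the bottom arrow then follows formally from the naturality of $\rho_{L,X,F}$ (stated with \eqref{Pi0ChRestriction}) and the naturality of $\psi_{-,-}$ in its two arguments (noted just before Lemma~\ref{lm:A1Res(field)}).

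\emph{The main obstacle} is entirely bookkeeping rather than mathematical: one must check that the $\psi$ on the left side, defined componentwise over the factors $F_j$ of $L\otimes_k F$, really does fit into the square — i.e. that \eqref{Pi0ChRestriction} was constructed compatibly with the componentwise structure and that the image class $[\prod_i y_i]$ used to define $\psi_{-,\prod_iL_i}$ matches, under \eqref{Pi0ChRestriction}, the class $[\,\cdot\,]$ on $\Res_{L/k}(X)(F)$. This is immediate from the construction of \eqref{Pi0ChRestriction} via the adjunction isomorphisms $X(L\otimes_kF)\cong\Res_{L/k}(X)(F)$ and $X(\bbb{A}^1_{L\otimes_kF})\cong\Res_{L/k}(X)(\bbb{A}^1_F)$ being compatible with the $0$- and $1$-sections, so no real difficulty arises; one just has to be careful that ``elementary $\bbb{A}^1$-equivalence'' on one side transports to the other, which is exactly what the compatibility of $\rho_{L,X,F}$ with $i_0,i_1$ records. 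I would also remark that properness of $X$ (hence of $\Res_{L/k}(X)$) is used only to invoke \cite[Theorem 2.4.3]{AsokMorel}; smoothness plays no role here.
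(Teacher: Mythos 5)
Your proof is correct and follows essentially the same route as the paper: decompose $L\otimes_kF$ into a finite product of finitely generated separable extensions, invoke Asok--Morel \cite[Theorem 2.4.3]{AsokMorel} componentwise on the $X$-side and directly for the proper $k$-scheme $\Res_{L/k}(X)$ on the other, and define the map as the composite $\psi_{\Res_{L/k}(X),F}\circ\rho_{L,X,F}\circ\psi_{X,L\otimes_kF}^{-1}$, with naturality inherited from $\rho$ and $\psi$. The paper's proof is exactly this composition of three isomorphisms, so no further comment is needed.
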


\begin{proof}
 If $F$ is a finitely generated separable extension of $k$, then $L\otimes_kF$ is a finite product of  finitely generated separable extensions of $k$; if $X$ is a proper $L$-scheme, then $\Res_{L/k}(X)$ is a proper $k$-scheme. We apply \cite[Theorem 2.4.3]{AsokMorel}, which together with the isomorphism \eqref{Pi0ChRestriction} gives us the sequence of isomorphisms
\begin{align*}
\pi_0^{\bbb{A}^1}(X)(L\otimes_kF)\xrightarrow{\psi^{-1}_{X,L\otimes_kF}}
\pi_0^{\bbb{A}^1, ch}(X)(L\otimes_kF)
\\\xrightarrow{\rho_{k,L,X,F}}
\pi_0^{\bbb{A}^1, ch}(\Res_{L/k}(X))(F)\xrightarrow{\psi_{\Res_{L/k}(X),F}}
\pi_0^{\bbb{A}^1}(\Res_{L/k}(X))(F),
 \end{align*}
 natural in $F$ and $X$.
\end{proof}

\begin{pr}\label{A1-connectivity_res_scalars}
Let $k\subset L$ be a finite separable field extension. Let $X$ be a smooth proper $L$-scheme.  If $X$ is $\bbb{A}^1$-connected, then so is $\Res_{L/k}(X)$.
\end{pr}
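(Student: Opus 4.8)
The plan is to reduce everything to statements about finitely generated separable field extensions, where Lemma~\ref{lm:A1Res(field)} and the Asok--Morel dictionary are available.

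\emph{Step 1: $\Res_{L/k}(X)$ is a smooth, proper $k$-variety.} Properness is already recorded in the proof of Lemma~\ref{lm:A1Res(field)}. Smoothness holds because $\Spec L \to \Spec k$ is finite \'etale and Weil restriction along a finite \'etale morphism preserves smoothness. For connectedness, note that $\mathbb{A}^1$-connectedness of $X$ forces $X$ to be connected with an $L$-rational point, hence geometrically connected over $L$; base changing $\Res_{L/k}(X)$ to $\bar{k}$ then yields a finite product of the geometrically connected fibres $X\times_{L,\sigma}\bar{k}$, which is connected, so $\Res_{L/k}(X)$ is geometrically connected, and being smooth it is integral.

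\emph{Step 2: triviality on field points.} Since $X$ is $\mathbb{A}^1$-connected, $\pi_0^{\mathbb{A}^1}(X)$ is the terminal Nisnevich sheaf on $\Sm_L$, so its value on $\Spec(L\otimes_k F)$ is a single point for every finitely generated separable extension $F/k$ (recall $L\otimes_k F$ is a finite product of finitely generated separable field extensions of $L$). By Lemma~\ref{lm:A1Res(field)} and the naturality of the isomorphism there, it follows that $\pi_0^{\mathbb{A}^1}(\Res_{L/k}(X))(F)$ is a single point for every finitely generated separable extension $F/k$.

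\emph{Step 3: promoting to $\mathbb{A}^1$-connectedness.} As $\Res_{L/k}(X)$ is proper, \cite[Theorem 2.4.3]{AsokMorel} identifies $\pi_0^{\mathbb{A}^1,ch}(\Res_{L/k}(X))(F)$ with $\pi_0^{\mathbb{A}^1}(\Res_{L/k}(X))(F)$, so Step 2 shows that $\pi_0^{\mathbb{A}^1,ch}(\Res_{L/k}(X))(F)$ is a single point for all finitely generated separable $F/k$; that is, $\Res_{L/k}(X)$ is $\mathbb{A}^1$-chain connected in the sense of Definition~\ref{df:A1chain_connected}. Since $\Res_{L/k}(X)$ is a smooth proper $k$-variety by Step 1, \cite[Theorem 2]{AsokMorel} then gives that it is $\mathbb{A}^1$-connected, as desired.

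The one step that is not purely formal is Step 1, or rather its interplay with Step 3: we only ever detect triviality of $\pi_0^{\mathbb{A}^1}$ of the Weil restriction on finitely generated separable field extensions, and this suffices precisely because $\Res_{L/k}(X)$ is a smooth proper variety, so the Asok--Morel comparisons of $\pi_0^{\mathbb{A}^1,ch}$ with $\pi_0^{\mathbb{A}^1}$ and of $\mathbb{A}^1$-chain connectedness with $\mathbb{A}^1$-connectedness apply. Verifying those hypotheses for the Weil restriction --- in particular its geometric connectedness --- is the main piece of bookkeeping.
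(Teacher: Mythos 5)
Your proof is correct and follows essentially the same route as the paper: both deduce from Lemma~\ref{lm:A1Res(field)} (together with the Asok--Morel comparison \cite[Theorem 2.4.3]{AsokMorel}) that $\Res_{L/k}(X)$ is $\mathbb{A}^1$-chain connected, and then invoke \cite[Theorem 2]{AsokMorel} for the smooth proper $k$-scheme $\Res_{L/k}(X)$ to upgrade this to $\mathbb{A}^1$-connectedness. Your Step 1 bookkeeping (smoothness, properness, geometric connectedness of the Weil restriction) is a harmless elaboration of what the paper asserts in one line.
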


\begin{proof}
By Lemma~\ref{lm:A1Res(field)} and \eqref{Pi0ChRestriction}, $\Res_{L/k}(X)$ is $\bbb{A}^1$-chain connected. (See Definition~\ref{df:A1chain_connected}). Since $X$ is a smooth, proper $L$-scheme, $\Res_{L/k}(X)$ is a smooth proper $k$-scheme. By \cite[Theorem 2]{AsokMorel}, it follows that $\Res_{L/k}(X)$ is $\bbb{A}^1$-connected. 
\end{proof}

We record the following well-known fact.

\begin{pr}
Let $X$ and $Y$ be smooth $k$-schemes which are $\bbb{A}^1$-connected. Then $X \times_k Y$ is $\bbb{A}^1$-connected.
\end{pr}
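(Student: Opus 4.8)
I would prove this by showing that $\pi_0^{\mathbb{A}^1}$ sends the fibre product $X\times_k Y$ to the (sheaf-theoretic) fibre product $\pi_0^{\mathbb{A}^1}(X)\times_k\pi_0^{\mathbb{A}^1}(Y)$, which collapses to $\Spec k$ once each factor is trivial. Concretely, recall that $\pi_0^{\mathbb{A}^1}(Z)$ is the Nisnevich sheafification of the presheaf $U\mapsto \pi_0\big((\Laone Z)(U)\big)$, where $\Laone$ is a fibrant replacement for the $\mathbb{A}^1$-Nisnevich-local structure on $\sPre(\Sm_k)$. The plan is to observe that all three operations involved here — $\Laone$, the sectionwise functor $\pi_0$ on simplicial sets, and Nisnevich sheafification — commute with finite products, and to assemble these into a natural isomorphism $\pi_0^{\mathbb{A}^1}(X\times_k Y)\xrightarrow{\ \sim\ }\pi_0^{\mathbb{A}^1}(X)\times_k\pi_0^{\mathbb{A}^1}(Y)$.

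Two of the three ingredients are formal: Nisnevich sheafification is left exact, hence preserves finite products, and the functor $\pi_0$ of simplicial sets preserves finite products (products of simplicial presheaves being computed sectionwise), by the elementary description of $\pi_0$ as a coequalizer. The remaining input — and the only non-formal one — is that $\Laone$ respects finite products. For this I would invoke that the $\mathbb{A}^1$-local model structure of \cite{morelvoevodsky1998} is Cartesian: since every object of $\sPre(\Sm_k)$ is cofibrant, the functor $-\times B$ preserves $\mathbb{A}^1$-weak equivalences, and a finite product of $\mathbb{A}^1$-local objects is again $\mathbb{A}^1$-local (a product of Nisnevich-fibrant objects is Nisnevich-fibrant, $\mathbb{A}^1$-locality of a product follows from $\mathrm{Map}$ into a product being the product of the $\mathrm{Map}$'s, and a Nisnevich-fibrant $\mathbb{A}^1$-local object is $\mathbb{A}^1$-fibrant by \cite[Prop.~3.19]{morelvoevodsky1998}). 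Hence $X\times Y\to \Laone X\times\Laone Y$ is an $\mathbb{A}^1$-weak equivalence with $\mathbb{A}^1$-fibrant target, so $\Laone(X\times Y)$ is sectionwise weakly equivalent to $\Laone X\times\Laone Y$.

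Granting the product formula, the conclusion is immediate: since $X$ and $Y$ are $\mathbb{A}^1$-connected, $\pi_0^{\mathbb{A}^1}(X)$ and $\pi_0^{\mathbb{A}^1}(Y)$ are each the terminal sheaf $\Spec k$, so the isomorphism reads $\pi_0^{\mathbb{A}^1}(X\times_k Y)\cong \Spec k\times_k\Spec k\cong \Spec k$, and by naturality this identification is the canonical map; thus $X\times_k Y$ is $\mathbb{A}^1$-connected. The main obstacle is exactly the clause that $\Laone$ (equivalently, $\pi_0^{\mathbb{A}^1}$) preserves finite products; rather than reproving it one may also simply quote it as a known property (e.g. from \cite{AsokMorel}). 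Finally, in the special case where $X$ and $Y$ are also proper one can sidestep this altogether: $X\times_k Y$ is visibly $\bbb{A}^1$-chain connected (join $(x,y)$ to $(x',y)$ inside $X\times\{y\}$, then to $(x',y')$ inside $\{x'\}\times Y$), so \cite[Theorem~2]{AsokMorel} applies.
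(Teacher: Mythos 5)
Your proposal is correct and follows essentially the same route as the paper: both establish $\pi_0^{\bbb{A}^1}(X\times_k Y)\cong \pi_0^{\bbb{A}^1}(X)\times\pi_0^{\bbb{A}^1}(Y)$ by combining the fact that $\Laone$ commutes with finite products (which the paper simply cites, while you sketch a proof and note it can be quoted) with the product-preservation of sectionwise $\pi_0$ and the left exactness of Nisnevich sheafification. The extra remark about the proper case via $\bbb{A}^1$-chain connectedness is a fine alternative but not needed.
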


\begin{proof}
$\pi_0^{\bbb{A}^1}(X \times_k Y) $ is the sheaf associated to the presheaf sending a smooth $k$-scheme $U$ to $\pi_0 (\Laone (X \times_k Y) (U))$. The functor $\Laone$ commutes with finite products (see e.g. \cite[2.2.1(iii)]{AsokWickelgrenWilliams}). It follows that 
\begin{align*}
\pi_0 (\Laone (X \times_k Y) (U)) \cong \pi_0 ((\Laone (X) \times \Laone (Y)) (U)) \cong\\ 
 \pi_0 (\Laone (X)(U) \times \Laone (Y)(U)) \cong \pi_0 \Laone X (U) \times  \pi_0 \Laone X (U). 
\end{align*} Since sheafification preserves finite limits, it follows that $\pi_0^{\bbb{A}^1}(X \times_k Y) \cong \pi_0^{\bbb{A}^1}(X) \times \pi_0^{\bbb{A}^1}(Y)$.
\end{proof}

\section{Local degree}\label{sec:local degree}

Now suppose that $f: X \to Y$ is an oriented map of smooth $n$-dimensional schemes over $k$ or, more generally, a map from an Artin stack to a smooth $k$-scheme which is oriented away from codimension $2$ (as in Definition~\ref{df:or_away_codim_c}) or pseudo-oriented (as in Definition~\ref{df:pseudo-oriented} and Setting \ref{fassumption_charpnonproper}). Let $x$ be a point of $X$ with image $y=f(x)$. Suppose there are Zariski open neighborhoods $W$ and $U$ of $x$ and $y$, respectively, such that $f(W) \subset U$ and the restriction $f\vert_W : W \to U$ is a finite map of schemes oriented by $\rho: L^{\otimes 2} \stackrel{\cong}{\to} \omega_{f\vert_{W}}$. By Proposition \ref{nonfinite_locus_is_codim2}, it is possible to find many $x$ and $y$ which admit such a $U$,$W$ and $\rho$ (because it is possible to find many such $x$ and $y$ for a generically finite, oriented, proper map $f: X \to Y$ to a smooth $k$-scheme $Y$, and the more general setup supposes a restriction satisfying these hypotheses). 

In this section, we define the local degree $\deg_x (f,\rho)$ in $\GW(k(y))$ under these circumstances . We also use the notation $\deg_x f$ for $\deg_x (f,\rho)$ when there is no danger of confusion. The degree defined in Section \ref{Section:degree} will be shown to be a sum of local degrees in Proposition \ref{pr:deg=sumlocal}, and we will give a formula to compute $\deg_x f$ with the Jacobian determinant in Proposition~\ref{degxf=Tr<J>}.

\subsection{Definition and properties}

To simplify notation, we let $f$ also denote the restriction $f=f\vert_W : W \to U$, where $f$,$W$, and $U$ are as in the beginning of the section. Let $f\times y: U \times_y \Spec k(y) \to \Spec k(y)$ denote the pullback of $f$ along $y: \Spec k(y) \to U$ as in the pullback diagram $$\xymatrix{  U \times_y \Spec k(y) \ar[r] \ar[d]^{f\times y} & W\ar[d]^{f}  \\ \Spec k(y) \ar[r]^y & U.}$$ The fiber $f^{-1}(y) \cong U \times_y \Spec k(y) $ is the coproduct $$U \times_y \Spec k(y) \cong \coprod_{z \in W: f(z) = y} \calO_{f^{-1}(y),z}.$$ For every $z$ in $W$ mapping to $y$, let $f_z$ denote the composition of the inclusion $\Spec \calO_{f^{-1}(y),z} \to f^{-1}(y)$ with $f \times y$. $$\xymatrix{\Spec \calO_{f^{-1}(y),z}\ar[d]_{f_z} \ar[r] & W \ar[d]^{f}\\
 \Spec k(y) \ar[r]^y & U. }$$

In our notation, $L$ is a locally free sheaf of rank $1$ on $W$ and $\rho: L^{\otimes 2} \to \omega_f $ is the isomorphism giving the orientation. Denote the fiber of the sheaf $f_*L$ at $y$ by $f_* L(y)$. Since $f$ is finite, the canonical map $f_* L(y) \to (f \times y)_* L$ is an isomorphism. By a slight abuse of notation, we also let $L$ denote its pullback to  $\Spec \calO_{f^{-1}(y),z} $. We have a canonical isomorphism \begin{equation}\label{pushforardL_as_perp_direct_sum}f_* L(y) \cong \oplus_{z \in W: f(z) = y} (f_z)_* L.\end{equation} Recall that the pullback of the pairing $\deg (f,\rho)$ of Definition \ref{GSDdegreedef} to $\Spec k(y)$ is denoted $\deg(f,\rho)(y)$, as in Definition \ref{df:degf(y)}. Since this pairing factors through the multiplication $f_* L \otimes f_* L \to f_* (L^{\otimes 2})$, it follows that \eqref{pushforardL_as_perp_direct_sum} is an orthogonal decomposition.

\begin{df}
Let $\deg_x (f,\rho)$ in $\GW(k(y))$ be the restriction of $\deg (f,\rho)(y)$ to $(f_z)_* L$. When the relative orientation is clear from context, we also write $\deg_x f$.
\end{df}

Let $k$ be a field. Suppose $f: X \to Y$ is a map from an Artin stack to a smooth $k$-scheme which is which is oriented away from codimension $2$ or pseudo-oriented. By hypothesis there is a dense open subset $U \subseteq Y$ such that the restriction $f \vert_{f^{-1}(U)}: f^{-1}(U) \to U$ is a generically finite, proper local completed intersection morphism. Moreover, there is a line bundle $L$ on $f^{-1}(U)$ together with an isomorphism $\rho: L^{\otimes 2} \to \omega_f$ giving the orientation. By the orthogonal decomposition \eqref{pushforardL_as_perp_direct_sum}, we have shown that the global degree is the sum of local degrees. 

\begin{pr}\label{pr:deg=sumlocal}
Let $k$ be a field. Suppose $f: X \to Y$ is a map from an Artin stack to a smooth $k$-scheme which is oriented away from codimension $2$ or pseudo-oriented. For all $y$ in $U$, there is an equality $\deg f (y) = \sum_{x \in f^{-1}(y)} \deg_x f$ in $\GW(k(y))$.
\end{pr}

Again take $x$ in $f^{-1}(U)$ mapping to $y$. Let $k(y) \subseteq E$ be a field extension, and let $f_E\vert_{U_E}: W_E \to U_E$ denote $f \vert_W \otimes_k E$. We have the commutative diagram \begin{equation*}
\xymatrix{W  \ar[d]_{f \vert_W}& \ar[l]_{\pi'} W_E \ar[d]^{f_E\vert_{W_E}} \\ U & \ar[l]^{\pi} U_E}
\end{equation*} There is a canonical point $\tilde{y}$ of $U_E$ mapping to $y$ determined by the map $\Spec E \to \Spec k(y) \to U$ and the identity map $\Spec E \to \Spec E$. The pullback of $\rho$ determines an orientation $(\pi')^* \rho$ of $f_E\vert_{W_E}$ because there is a canonical isomorphism $(\pi')^* \omega_{f \vert_W} \cong \omega_{f_E\vert_{W_E}}$\cite[Tag 01V0]{stacks-project}. 

\begin{pr}\label{local_deg_stable_base_change}
For all $\tilde{x}$ in $W_E$ mapping to $\tilde{y}$, we have the equality $\deg_{\tilde{x}} (f_E, (\pi')^* \rho) = \deg_x (f, \rho) \otimes E$ in $\GW(E)$.
\end{pr}

\begin{proof}
Let $L$ denote the line bundle on $W$ of the orientation $\rho$. Since $f_E\vert_{W_E}$ is the pullback of a finite, flat map, $f_E\vert_{W_E}$ is finite and flat. Because $\pi$ is flat and $f\vert_W$ is lci, the map $f_E\vert_{W_E}$ is lci by \cite[Tag 069I]{stacks-project}. We are in the situation of Proposition \ref{pr:degree_finite_commutes_basechange} and its proof. The natural map from cohomology and base change is an isomorphism $ (f_E\vert_{W_E})_* (\pi')^* L \cong \pi^*( f \vert_W)_* L$  ( \cite[Tag 02KG]{stacks-project}). Under this isomorphism the subspaces $((f_E)_{\tilde{x}})_*  (\pi')^* L$ and $\pi^*(f_x)_* L$ are identified. Combining cohomology and base change with the functoriality of the trace map \cite[Tag 0B6J]{stacks-project} (as in Proposition~\ref{pr:degree_finite_commutes_basechange}), we obtain a commutative diagram
\begin{equation*}
\small
\xymatrix@C=2.2em{\pi^*(f_x)_* L \otimes \pi^*(f_x)_* L \ar[r] \ar[d]^{\cong} &\pi^*( f \vert_W)_* L \otimes \pi^*( f \vert_W)_* L \ar[d]_{\cong} \ar[r]& \pi^*( f \vert_W)_* L^{\otimes 2} \stackrel{\rho}{\cong}  \pi^*( f \vert_W)_* \omega_{f} \ar[d]^{\cong} \ar[r] & \pi^* \cO_U \ar[d]^{\cong} \\ ((f_E)_{\tilde{x}})_*  (\pi')^* L \otimes ((f_E)_{\tilde{x}})_*  (\pi')^* L \ar[r] & (f_E)_* (\pi')^* L \otimes (f_E)_* (\pi')^* L \ar[r] & (f_E)_* (\pi')^* L^{\otimes 2} \stackrel{(\pi')^* \rho}{\cong} (f_E)_* \omega_{f_E} \ar[r] & \cO_{U_E}}
\end{equation*} where the vertical maps are isomorphisms. This commutative diagram gives an isomorphism of the forms defining $\deg_x (f, \rho) \otimes E$ and $\deg_{\tilde{x}}(f_E, (\pi')^* \rho)$.
\end{proof}

By Proposition \ref{local_deg_stable_base_change}, the computation of $\deg_x(f, \rho)$ reduces to the case where $y=f(x)$ is a rational point, because it may be computed after base change to $k(y)$, and in particular we may assume that $y$ is a closed point. We now give such a method of computation for $\deg_x f$ for $x$ a closed point with $k \subseteq k(x)$ separable.

As above, let $W$ and $U$ be smooth $k$-schemes of dimension $n$, let $x$ be a point of $X$, and let $f\vert_W : W \to U$ be finite and oriented by $\rho$. Suppose that $y=f(x)$ is a closed point with $k \subseteq k(x)$ separable. Then $f\vert_W^{-1}(y) \hookrightarrow W$ is a closed immersion. Consider the pullback diagram \begin{equation}\label{f'vertWsquare}\xymatrix{ f\vert_W^{-1}(y) \ar[d]_{f'\vert_W} \ar[r]^{y'} & W \ar[d]^{f\vert_W}\\
 \Spec k(y) \ar[r]^y & U } \end{equation}

\begin{lm}
The map $f'\vert_W$ is a finite, flat, local complete intersection morphism.
\end{lm}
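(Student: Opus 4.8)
The plan is to verify finiteness, flatness, and the local complete intersection property separately; only the last requires any real work. Finiteness is immediate, since the square~\eqref{f'vertWsquare} exhibits $f'\vert_W$ as the base change of the finite morphism $f\vert_W\colon W\to U$ along $y\colon\Spec k(y)\to U$, and finite morphisms are stable under base change. Flatness is also immediate, as the target $\Spec k(y)$ is the spectrum of a field, over which every module is flat.

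The content of the lemma is the local complete intersection property. Here one should keep in mind that the scheme-theoretic fiber of a finite morphism over a field need not be a complete intersection in general (a fat point supported at a single closed point can fail to be lci), so the smoothness of $W$ and $U$ must enter. I would argue as follows. Set $W_{k(y)}:=W\times_k\Spec k(y)$ and $U_{k(y)}:=U\times_k\Spec k(y)$; both are smooth over $k(y)$ of dimension $n$. Viewing $y$ as the canonical $k(y)$-point of $U$, it corresponds to a section of $U_{k(y)}\to\Spec k(y)$, which is a closed immersion; base-changing this section along $W_{k(y)}\to U_{k(y)}$ identifies $f\vert_W^{-1}(y)$ with a closed subscheme $Z\hookrightarrow W_{k(y)}$. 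Since $f\vert_W^{-1}(y)$ is finite over $\Spec k(y)$, it is zero-dimensional, so each point $z$ of $Z$ is a closed point of $W_{k(y)}$ and $\mathcal{O}_{W_{k(y)},z}$ is a regular local ring of dimension $n$. The ideal of $Z$ at $z$ is generated by the $n$ elements obtained as the images of a regular system of parameters of the regular local ring of $U_{k(y)}$ at the image of the section, and the quotient by those $n$ elements is $\mathcal{O}_{Z,z}$, which is zero-dimensional. Hence these $n$ elements are a system of parameters of the Cohen--Macaulay (indeed regular) local ring $\mathcal{O}_{W_{k(y)},z}$, so they form a regular sequence \cite[Theorem 17.4]{Matsumura_CRT}, and therefore $Z\hookrightarrow W_{k(y)}$ is a regular immersion, in particular a local complete intersection morphism. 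Composing it with the smooth, hence lci, structure morphism $W_{k(y)}\to\Spec k(y)$ and using that lci morphisms are closed under composition \cite[Tag 068E]{stacks-project}, we conclude that $f'\vert_W\colon f\vert_W^{-1}(y)\to\Spec k(y)$ is a local complete intersection morphism.

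I expect the regular-sequence step to be the main (indeed essentially the only substantive) obstacle: it is the one place where the hypotheses that $W$ and $U$ are smooth, $f\vert_W$ is finite, and $y$ is closed are genuinely used, and where an abstract finiteness statement is converted into the concrete assertion that the equations defining the fiber remain a regular sequence after pullback. Everything else is formal bookkeeping with base change and composition. As an alternative to the factorization through $W_{k(y)}$, one could instead observe that $f\vert_W$ is itself flat by miracle flatness \cite[Theorem 23.1 p.179]{Matsumura_CRT} — the relevant local rings of $W$ and $U$ are regular of the same dimension with zero-dimensional fibers — and is lci as a finite-type morphism between regular schemes, hence is syntomic; the stability of syntomic morphisms under arbitrary base change then yields the flatness and the lci property of $f'\vert_W$ simultaneously.
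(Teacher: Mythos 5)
Your proof is correct, but it takes a genuinely different route from the paper's. The paper argues purely formally with lci permanence properties: it notes that $y:\Spec k(y)\to U$ is lci because it is a finite type map between regular schemes, uses miracle flatness of $f\vert_W$ (finite between smooth schemes of the same dimension) to see that the base-changed immersion $y'$ is lci, composes with the lci morphism $f\vert_W$, and then cancels along the smooth morphism $\Spec k(y)\to\Spec k$ (Tag 069M) to extract that $f'\vert_W$ itself is lci; the finiteness and flatness are dispatched by base-change stability. You instead verify the definition directly: after base change to $k(y)$ you exhibit the fiber inside the smooth $n$-dimensional scheme $W_{k(y)}$ as cut out by the $n$ pullbacks of a regular system of parameters, and the zero-dimensionality of the fiber makes these a system of parameters, hence a regular sequence in the Cohen--Macaulay local ring, so the fiber is a regular immersion composed with a smooth morphism. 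What your approach buys is transparency and slightly greater generality: it never invokes the lci cancellation lemma, and it nowhere uses that $k(y)/k$ is separable (the paper's last step needs $\Spec k(y)\to\Spec k$ smooth), although separability is part of the ambient hypotheses anyway, so both arguments apply as stated. Your closing alternative (miracle flatness plus ``finite type between regular schemes is lci'', then stability of flat lci, i.e.\ syntomic, morphisms under arbitrary base change) is also valid and is in fact the closest in ingredients to the paper's proof, just packaged through syntomic base change rather than through composition and cancellation. The only point worth tightening is your assertion that the section $\Spec k(y)\to U_{k(y)}$ is a closed immersion, which uses separatedness of $U$ over $k$; since the argument is local at the image point, an immersion suffices and nothing is lost.
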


\begin{proof}
$f'\vert_W$ is finite and flat because these properties are stable under pullback. Since $\Spec k(y)$ and $U$ are regular schemes, the finite type map $y: \Spec k(y) \to U$ is a local complete intersection morphism \cite[0E9K]{stacks-project}. Since $f\vert_W$ is finite and $W$ and $U$ are smooth and dimension $n$, it follows that $f\vert_W$ is flat  \cite[Theorem 23.1 p.179]{Matsumura_CRT}. Thus the pullback $y'$ is a local complete intersection morphism \cite[Tag 069I]{stacks-project}. Since $W$ and $U$ are smooth $k$-schemes, $f\vert_W$ is a local complete intersection morphism. Thus the composition $ f\vert_W \circ y'  = y \circ f'\vert_W$ is lci as well. Since $U \to \Spec k$ is smooth, it follows that the structure map for $f\vert_W^{-1}(y)$ over $\Spec k$ is lci. Since $\Spec k(y) \to \Spec k$ is smooth, it follows that $f'\vert_W$ is lci as claimed \cite[Tag 069M]{stacks-project}
\end{proof}

Since $f'\vert_W$ is flat, the square \eqref{f'vertWsquare} and \cite[Tag 08QQ]{stacks-project} define a canonical isomorphism \begin{equation}\label{omegaf'ispullback} \omega_{f'\vert_W} \cong (y')^*\omega_{f\vert_W}.\end{equation} Since $f\vert_W$ is finite, $x$ determines a closed and open subscheme $\Spec \calO_{f^{-1}(y),x} \hookrightarrow f\vert_W^{-1}(y)$ of $f\vert_W^{-1}(y)$. By \cite[Tags 063L, 063M]{stacks-project}, the inclusion of a closed and open component in a locally Noetherian scheme is a local complete intersection morphism, because Koszul-regular immersions are lci. Define $f_x: \Spec \calO_{f^{-1}(y),x} \to \Spec k(y)$ to be the composition of $\Spec \calO_{f^{-1}(y),x} \hookrightarrow f\vert_W^{-1}(y)$ and $f'\vert_W$, so in particular $f_x$ is finite, flat, lci and fits into the commutative diagram \begin{equation}\label{eq:f-1yx}\xymatrix{\Spec \calO_{f^{-1}(y),x}\ar[d]_{f_x} \ar[rr]^{i_{f,x}} && W \ar[d]^{f\vert_W}\\
 \Spec k(y) \ar[rr]^y && U ~~. }\end{equation} The isomorphism \eqref{omegaf'ispullback} defines an isomorphism $ \omega_{f_x} \cong i_{f,x}^* \omega_{f\vert_W},$ whence $i_{f,x}^*\rho $ defines a relative orientation of $f_x$.

\begin{pr}\label{degxfrho=degfxix*rho}
Let $W$ and $U$ be smooth $k$-schemes of dimension $n$, let $x$ be a point of $X$, and let $f : W \to U$ be finite and oriented by $\rho$. Suppose that $y=f(x)$ is a closed point with $k(y) \subseteq k(x)$ separable. Then $$\deg_x(f, \rho) = \deg (f_x, i_{f,x}^*\rho).$$
\end{pr}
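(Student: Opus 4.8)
The plan is to realise $\deg_x(f,\rho)$ as the base change of the global degree $\deg(f\vert_W,\rho)$ to the point $y$, to unwind that base change through the disjoint-union decomposition of the fibre $f\vert_W^{-1}(y)$, and finally to identify, on the component through $x$, the resulting orientation with $i_{f,x}^*\rho$. For the first step I would apply Proposition~\ref{pr:degree_finite_commutes_basechange} to the pullback square \eqref{f'vertWsquare}, with $f\vert_W$ (finite, flat, oriented, lci) in the role of $f$ and $y\colon\Spec k(y)\to U$ in the role of the base-change map. The lemma preceding the present proposition shows $f'\vert_W$ is finite, flat and lci, and $(y')^*\rho$ orients it via \eqref{omegaf'ispullback}, so Proposition~\ref{pr:degree_finite_commutes_basechange} gives the equality
\[
\deg(f\vert_W,\rho)(y)\;=\;y^*\deg(f\vert_W,\rho)\;=\;\deg\bigl(f'\vert_W,(y')^*\rho\bigr)
\]
of symmetric bilinear forms over $k(y)$, under the cohomology-and-base-change identification $y^*\bigl((f\vert_W)_*L\bigr)\cong(f'\vert_W)_*\bigl((y')^*L\bigr)=(f\vert_W)_*L(y)$ exhibited in the proof of that proposition; moreover this identification matches the summand $(g_z)_*L$ of \eqref{pushforardL_as_perp_direct_sum} with the summand of $(f'\vert_W)_*(y')^*L$ coming from the open-and-closed component $\Spec\calO_{f^{-1}(y),z}$ of the fibre. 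It therefore suffices to show that the restriction of $\deg(f'\vert_W,(y')^*\rho)$ to the summand indexed by $x$ equals $\deg(f_x,i_{f,x}^*\rho)$.

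For the second step I would unwind Definition~\ref{GSDdegreedef} along the decomposition $f'\vert_W=\coprod_{z\in f^{-1}(y)}f_z$ of $f'\vert_W$ into its restrictions to the open-and-closed components $\Spec\calO_{f^{-1}(y),z}$, noting that $f_z$ is the map $g_z$ of the preceding discussion. Each ingredient of the definition respects this decomposition: $(f'\vert_W)_*$ is the direct sum of the $(f_z)_*$; the line bundle $\omega_{f'\vert_W}$ restricts on the $z$-th component to $\omega_{f_z}$, since $\omega_j$ is canonically trivial for the open-and-closed immersion $j\colon\Spec\calO_{f^{-1}(y),z}\hookrightarrow f\vert_W^{-1}(y)$ (its cotangent complex vanishes) and \eqref{omega_of_composite} applies to $f_z=f'\vert_W\circ j$; $\Tr_{f'\vert_W}$ is the sum of the $\Tr_{f_z}$, being evaluation at $1$ componentwise; and the structure map $(f'\vert_W)_*M\otimes(f'\vert_W)_*M\to(f'\vert_W)_*M^{\otimes 2}$ of Definition~\ref{GSDdegreedef} (with $M=(y')^*L$) factors through the lax-monoidal map into $(f'\vert_W)_*(M\otimes M)$, which annihilates every cross term between distinct components --- these being pairwise disjoint --- and restricts on each diagonal block to the corresponding map for $f_z$. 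Putting these together shows that $\deg(f'\vert_W,(y')^*\rho)$ is the orthogonal sum over $z$ of $\deg\bigl(f_z,(y')^*\rho|_{\Spec\calO_{f^{-1}(y),z}}\bigr)$ --- which incidentally reproves the perpendicularity asserted just before the definition of $\deg_x$ --- so its restriction to the $x$-th summand is $\deg\bigl(f_x,(y')^*\rho|_{\Spec\calO_{f^{-1}(y),x}}\bigr)$.

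For the last step I would check that $(y')^*\rho|_{\Spec\calO_{f^{-1}(y),x}}$ and $i_{f,x}^*\rho$ are the same orientation of $f_x$. The immersion $i_{f,x}$ factors as $\Spec\calO_{f^{-1}(y),x}\xrightarrow{j}f\vert_W^{-1}(y)\xrightarrow{y'}W$, so pulling $\rho$ back along $i_{f,x}$ agrees with pulling back along $y'$ and then along $j$; and the canonical isomorphism $\omega_{f_x}\cong i_{f,x}^*\omega_{f\vert_W}$ used to define $i_{f,x}^*\rho$ is, by the compatibility of the transitivity isomorphism \eqref{omega_of_composite} with the base-change isomorphism \eqref{omegaf'ispullback} along $i_{f,x}=y'\circ j$, precisely the composite $\omega_{f_x}\cong j^*\omega_{f'\vert_W}\cong j^*(y')^*\omega_{f\vert_W}=i_{f,x}^*\omega_{f\vert_W}$ underlying the restriction of $(y')^*\rho$ computed in the second step. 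Hence the two orientations coincide, $\deg\bigl(f_x,(y')^*\rho|_{\Spec\calO_{f^{-1}(y),x}}\bigr)=\deg(f_x,i_{f,x}^*\rho)$, and chaining the three steps gives $\deg_x(f,\rho)=\deg(f_x,i_{f,x}^*\rho)$. The argument is of a routine base-change-bookkeeping nature; the one point that genuinely has to be pinned down, and which I expect to be the main obstacle, is the coherence invoked in this last step between the canonical isomorphisms \eqref{omega_of_composite} and \eqref{omegaf'ispullback} for determinants of cotangent complexes.
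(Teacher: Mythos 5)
Your argument is correct, and it rests on the same two trace compatibilities as the paper's proof, but you package them differently. The paper compares the two defining compositions on the common vector space $(f_x)_* i_{f,x}^*L$ directly and reduces everything to the commutativity of a diagram of trace maps, which it verifies by inserting the intermediate map $f_y\colon f\vert_W^{-1}(y)\to \Spec k(y)$ and invoking Conrad's (TRA1) compatibility of the trace with composition for the lower square and the ``evaluate at $1$'' description of the trace, which commutes with base change, for the upper square. You instead base-change the entire bilinear form first, quoting Proposition~\ref{pr:degree_finite_commutes_basechange} for the square \eqref{f'vertWsquare} (whose proof already contains the trace/base-change compatibility), and then decompose $\deg(f'\vert_W,(y')^*\rho)$ as an orthogonal sum over the clopen components of the fiber --- the form-level version of the paper's (TRA1) step, which in passing also proves the perpendicularity of the summands in \eqref{pushforardL_as_perp_direct_sum} that the paper asserts without argument. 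Your final step is essentially definitional: the isomorphism $\omega_{f_x}\cong i_{f,x}^*\omega_{f\vert_W}$ the paper uses to define $i_{f,x}^*\rho$ is exactly the restriction of \eqref{omegaf'ispullback} to the clopen component, and since that inclusion is an open immersion (its cotangent complex vanishes and pullback along it is literal restriction), the coherence with \eqref{omega_of_composite} that you flag as the main potential obstacle is immediate. So the route is sound; the difference from the paper is organizational --- you reuse the already-proved base-change proposition and argue at the level of whole forms, whereas the paper argues at the level of trace maps and cites Conrad twice --- and your version has the mild advantage of making the orthogonal decomposition explicit rather than assumed.
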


\begin{proof}
$f$ is flat by \cite[Theorem 23.1 p.179]{Matsumura_CRT}. Both $\deg_x(f, \rho)$ and $\deg (f_x, i_{f,x}^*\rho)$ are bilinear forms on the $k(y)$-vector space $(f_x)_* i_{f,x}^* L$. Note that $\deg (f_x, i_{f,x}^*\rho)$ is the composition $$((f_x)_* i_{f,x}^* L)^{\otimes 2} \to (f_x)_* i_{f,x}^* (L^{\otimes 2}) \xleftarrow[\cong]{i_{f,x}^*\rho} (f_x)_* \omega_{f_x}   \to k(y).$$ By definition, $\deg_x(f, \rho)$ is the composition $$((f_x)_* i_{f,x}^* L)^{\otimes 2} \to ((f \vert_W)_* L(y))^{\otimes 2}  \to (f \vert_W)_* (L^{\otimes 2})(y) \xleftarrow[\cong]{\rho} (f \vert_W)_* \omega_{f \vert_W} (y)\to k(y).$$ The diagram \eqref{eq:f-1yx} induces a map 
\[
\Spec \calO_{f^{-1}(y), x} \to f^{-1}(y):=\Spec k(y) \times_U  W
\] from $\Spec \calO_{f^{-1}(y),x}$ to the fiber product of $y$ and $f\vert_W$. Composing with cohomology and base change of the pullback, we have the commutative diagram \begin{equation}\label{eq:cohbcf}\xymatrix{ (y^*(f \vert_W)_* L)^{\otimes 2}  \ar[r]& y^*(f \vert_W)_* (L^{\otimes 2}) &&  \ar[ll]_{\cong}^{y^*(f \vert_W)_*\rho}y^*(f \vert_W)_* \omega_{f \vert_W} \\
((f_x)_* i_{f,x}^* L)^{\otimes 2} \ar[r] \ar[u]&(f_x)_* i_{f,x}^* (L^{\otimes 2}) \ar[u] && \ar[ll]_{\cong}^{(f_x)_*i_{f,x}^*\rho} (f_x)_* \omega_{f_x}  . \ar[u] }\end{equation}

Applying  \cite[Lemma 3.4.3 TRA1]{Conrad_Grothendieck_duality_and_base_change} to the composition $\Spec \calO_{f^{-1}(y), x} \to \Spec k(y) \times_U  W \to \Spec k(y)$, we have the commutative diagram
\[
\xymatrix{(f_y)_* \omega_{f_y}  \ar[r]  &k(y) \\ (f_x)_* \omega_{f_x}  \ar[r] \ar[u] &k(y) \ar[u]^{\text{id}}}
\] where the horizontal arrows are the trace maps.

The trace map for finite flat maps commutes with base change: To see this,  let $A \subset B$ be ring map corresponding to a finite flat map $f: \Spec B \to \Spec A$ and let $y$ be a point of $\Spec A$. View $\Hom_A(B,A)$ as a coherent sheaf on $\Spec A$. Then there is a canonical isomorphism $f_* \omega_f \cong \Hom_A(B,A)$ and the trace map $\Tr_{B/A}:\Hom_A(B,A) \to A$ is evaluation at $1$  \cite[(3.4.7) p147]{Conrad_Grothendieck_duality_and_base_change}. The pullback of $\Tr_{B/A}$ by $\Spec k(y) \to \Spec A$ is the evaluation at $1$ map $\Hom_{k(y)}(B_y, k(y))$ as claimed. Thus we have another commutative diagram
\[
\xymatrix{ y^*(f \vert_W)_* \omega_{f \vert_W} (y)\ar[r] & k(y)\\ (f_y)_* \omega_{f_y}  \ar[r] \ar[u] &k(y) \ar[u]^{\text{id}} }
\]
where the horizontal arrows are the trace maps.

Putting the previous two commutative diagrams together, we have a commutative diagram
$$\xymatrix{ y^*(f \vert_W)_* \omega_{f \vert_W} (y)\ar^{\Tr}[rr] && k(y) \\ (f_x)_* \omega_{f_x}  \ar[rr]^{\Tr} \ar[u] &&k(y) \ar[u]^{\text{id}}} $$
which combines with the diagram \eqref{eq:cohbcf} and the fact mentioned above that both $\deg_x(f, \rho)$ and $\deg (f_x, i_{f,x}^*\rho)$ are bilinear forms on the $k(y)$-vector space $(f_x)_* i_{f,x}^* L$ to prove the proposition.

 \end{proof}

\subsection{Computation with the Jacobian}

Let $f: X \to Y$ be a oriented map between smooth, connected schemes of dimension $n$. So we have a line bundle $L$ on $X$, and an isomorphism $$\rho: \Hom (\det TX, f^* \det TY) \to  L^{\otimes 2} .$$ Then $f$ induces a map $Tf$ on tangent bundles and a global section $$\det Tf \in \Hom (\det TX, f^* \det TY).$$ Taking the image under the $\rho$ gives a global section $\rho(\det Tf)$ of $L^{\otimes 2}$.

\begin{constrn}\label{sectionofsquareasfn}
{\em A section of the {\em square} of a line bundle determines a canonical element of the Zariski sheaf $\calO_{X}/(\calO_{X}^*)^2$. Namely, suppose $\sigma$ is a section of  $L^{\otimes 2}$. Around any point $x$, choose a local trivialization of $L$, identifying $\sigma$ with an element of $\calO_{X,x}$. Any two choices of local trivialization will change this element by the square of a unit in $\calO_{X,x}$.}
\end{constrn}

\begin{df}\label{df:Jacobian}
The {\em Jacobian} $Jf$ of $f$ is the section of $\calO/(\calO^*)^2$ corresponding to $\rho(\det Tf)$ by Construction \ref{sectionofsquareasfn}: $$Jf = \rho(\det Tf) \in \calO/(\calO^*)^2.$$ Let $Jf_x$ (respectively $Jf(x)$) be the image of $Jf$ in $\calO_{X,x}/ (\calO_{X,x}^*)^2$ (respectively $k(x)/(k(x)^*)^2$).
\end{df}

\begin{pr}\label{degxf=Tr<J>}
If $f$ is \'etale at $x$, then $\deg_x f = \Tr_{k(x)/k(y)} \langle Jf(x) \rangle$.
\end{pr}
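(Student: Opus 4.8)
The plan is to identify the situation near $x$ with a finite étale map of spectra of fields, invoke Remark~\ref{A1deg_finite_etale_map}, and keep track of how the chosen orientation $\rho$ differs from the canonical one by the Jacobian.

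First I would reduce to the case where $y$ is a rational point. Base changing along $y\colon\Spec k(y)\to Y$ produces a map which is still étale at the canonical point $\tilde x$ above $x$, with $k(\tilde x)=k(x)$ and $f(\tilde x)$ a $k(y)$-rational point; by Proposition~\ref{local_deg_stable_base_change} (with $E=k(y)$) the local degree is unchanged, and since the differential and $\rho$ pull back, so does the Jacobian, i.e.\ $J(f_{k(y)})(\tilde x)=Jf(x)$ in $k(x)^*/(k(x)^*)^2$. So assume $k(y)=k$. Since $f$ is étale at $x$, the fibre local ring $\calO_{f^{-1}(y),x}$ is the finite separable extension $k(x)$ of $k(y)$, so Proposition~\ref{degxfrho=degfxix*rho} applies and gives $\deg_x(f,\rho)=\deg(f_x,i_{f,x}^*\rho)$, where $f_x\colon\Spec k(x)\to\Spec k(y)$ is finite étale and $i_{f,x}^*L$ is a one-dimensional $k(x)$-vector space; fixing a local trivialization of $L$ near $x$ makes $Jf$ an honest function whose value $Jf(x)\in k(x)^*$ is a unit (this being equivalent to $f$ being étale at $x$), well-defined up to squares.

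Next I would compare $i_{f,x}^*\rho$ with the canonical orientation of $f_x$ from Remark~\ref{A1deg_finite_etale_map}. On a neighbourhood of $x$ where $f$ is étale, the cotangent map $f^*\Omega_{Y/k}\to\Omega_{X/k}$ is an isomorphism, so there $\det Tf$ is the canonical trivializing section of $\omega_f=\det L_f$; I would then check that under the isomorphism $\omega_{f_x}\cong i_{f,x}^*\omega_{f\vert_W}$ coming from flatness of $f\vert_W$ this restricts to the canonical trivializing section of $\det L_{f_x}$ used to orient the finite étale map $f_x$ in Remark~\ref{A1deg_finite_etale_map}. Granting this, the definition of the Jacobian says $\rho(\det Tf)$ is $Jf(x)$ times the square of the chosen trivialization of $L$, so in that trivialization $i_{f,x}^*\rho$ is $Jf(x)$ times the canonical orientation of $f_x$. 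By Remark~\ref{A1deg_finite_etale_map} the canonical orientation yields the trace form $\Tr_{k(x)/k(y)}\langle 1\rangle$, and inspecting Definition~\ref{GSDdegreedef} shows that rescaling the orientation by a unit $u\in k(x)^*$ rescales $\rho^{-1}$, hence the underlying rank-one $k(x)$-form, by $u^{-1}$; thus $\deg(f_x,i_{f,x}^*\rho)=\Tr_{k(x)/k(y)}\langle Jf(x)^{-1}\rangle=\Tr_{k(x)/k(y)}\langle Jf(x)\rangle$, using $\langle u^{-1}\rangle=\langle u\rangle$ in $\GW(k(x))$ and that the trace form depends only on the square class.

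The one step with real content — and hence the main obstacle — is the compatibility assertion in the previous paragraph: that the section $\det Tf$ of $\omega_f$ restricts along the fibre to the canonical generator of $\det L_{f_x}$, i.e.\ that the determinant of a quasi-isomorphism of cotangent complexes is compatible with the flat base-change isomorphism for cotangent complexes. Matching the Grothendieck--Serre trace $\Tr_{f_x}$ with the classical field trace for that generator is the remaining point to check, but it is precisely the computation already underlying Remark~\ref{A1deg_finite_etale_map}; everything else is bookkeeping of the identifications set up in Section~\ref{Section:degree}.
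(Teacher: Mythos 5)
Your argument is correct, and it reaches the same essential fact as the paper but packages it differently. The paper's proof is a direct computation: it restricts the fiberwise form $\deg(f,\rho)(y)$ to the summand at $x$, trivializes $L$ near $x$, identifies $(\omega_f)_x \cong \underline{\Hom}_{\calO_{Y,y}}(\calO_{X,x},\calO_{Y,y})$ with trace given by evaluation at $1$, and then invokes Scheja--Storch ((4.2) Satz in \cite{scheja}) to say that this isomorphism carries the section $\det Tf$ to the classical trace $\Tr\colon \calO_{X,x}\to\calO_{Y,y}$; the formula $\deg_x f = \Tr_{k(x)/k(y)}\langle 1/Jf(x)\rangle = \Tr_{k(x)/k(y)}\langle Jf(x)\rangle$ then drops out exactly as in your final rescaling step. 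You instead reduce first to $y$ rational via Proposition~\ref{local_deg_stable_base_change} and then to the finite \'etale map of fields $f_x$ via Proposition~\ref{degxfrho=degfxix*rho} (neither of which the paper's proof uses explicitly, though the surrounding text makes the same reduction available), and you compare $i_{f,x}^*\rho$ with the canonical \'etale orientation of Remark~\ref{A1deg_finite_etale_map}, tracking the discrepancy unit $Jf(x)$. What your route buys is a cleaner separation of bookkeeping from content; what it costs is that the ``one step with real content'' you flag --- that $\det Tf$ restricts to the canonical generator of $\omega_{f_x}$ and that the Grothendieck--Serre trace on that generator is the classical field trace --- is exactly the point the paper does not leave to Remark~\ref{A1deg_finite_etale_map} (which is stated without proof) but settles by the Scheja--Storch citation inside this very proof. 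So your argument is complete once you either cite that result at your flagged step or carry out the (formal) base-change compatibility of the canonical trivialization of $\det L_f$ for \'etale maps; as written it is a correct proof modulo that explicitly acknowledged check, with the unit-rescaling and $\langle u^{-1}\rangle = \langle u\rangle$ bookkeeping matching the paper's computation.
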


\begin{proof}

By Proposition~\ref{degxfrho=degfxix*rho}, we have $\deg_x(f, \rho) = \deg (f_x, i_{f,x}^*\rho).$ Since $f$ is \'etale at $x$, the canonical map $\calO_{f^{-1}(y),x} \to k(x)$ is an isomorphism and $k(y) \subseteq k(x)$ is a finite, separable extension. Choosing a trivialization of $L$ near $x$ defines an isomorphism $i_{f,x}^*L \cong \calO_{f^{-1}(y),x}$, thus defining an isomorphism  $i_{f,x}^*L \cong k(x)$, which we now fix for the rest of the proof. By Definition~\ref{GSDdegreedef}, it follows that $ \deg (f_x, i_{f,x}^*\rho)$ in $\GW(k(y))$ is represented by
\begin{equation}\label{eq:JacPropformeq}
k(x) \otimes k(x) \to k(x)\cong i_{f,x}^*L^{\otimes 2} \xrightarrow{(f_x)_*i_{f,x}^*\rho^{-1}} (f_x)_* \omega_{f_x} (y) \xrightarrow{\Tr_{f_x}} k(y).
\end{equation} Here, the first map comes from the lax monoidal structure $f_* L \otimes f_* L \to f_* (L^{\otimes 2})$, and is thus identified with the multiplication map. 

From  \eqref{eq:Serre_duality_finite_flat_lci} applied to $f_x$, we have an isomorphism $\omega_{f_x} \cong f_x^{\natural}\Hom_{k(y)}(k(x),k(y))$. Under this isomorphism, the section $\det Tf$ of  $\omega_{f_x}$ is sent to $\Tr: k(x) \to k(y)$ by \cite[(4.2)Satz]{scheja}, where here $\Tr: k(x) \to k(y)$ denotes the trace associated to the finite \'etale algebra $k(y)\subseteq k(x) $. The map $\Tr_{f_x}$ in \eqref{eq:JacPropformeq} under this isomorphism $\omega_{f_x} \cong f_x^{\natural}\Hom_{k(y)}(k(x),k(y))$ is evaluation at $1 \in k(x)$ as in Section~\ref{subsection:degree_finite_flat_oriented_lci}. Thus $\Tr_{f_x}(c\det Tf) = \Tr(c)$ for any $c \in k(x)$.

Unwinding Definition~\ref{df:Jacobian}, it follows that the image of $(a,b)$ in $k(x) \otimes k(x)$ under the pairing \eqref{eq:JacPropformeq} is
\[
\Tr\big(\frac{ab}{ \rho ( \det Tf (x))}\big).
\] Thus $ \deg (f_x, i_{f,x}^*\rho) = \Tr_{k(x)/k(y)}\langle \frac{1}{Jf(x)} \rangle = \Tr_{k(x)/k(y)}\langle Jf(x) \rangle$ as claimed.

\end{proof}

Taking $x$ to be the generic point of $X$, this shows the Jacobian can be used to compute $\deg f$.

\begin{co}\label{deg=trjacgeneric}
Let $f: X \to Y$ be a separable map between smooth, proper connected $k$-schemes of dimension $n$. Suppose there exists a closed subset $Z$ of $Y$ of codimension at least $2$ such that the restriction of $f$ to $f^{-1}(Y-Z)$ is oriented. Let $\eta$ denote the generic point of $X$. Then $ \Tr_{k(X)/k(Y)} \langle Jf(\eta) \rangle$ is in the image of $\sGW(Y) \subseteq \GW(k(Y))$ and the degree of $f$ is given by $$\deg f = \Tr_{k(X)/k(Y)} \langle Jf(\eta) \rangle.$$ Moreover, if $Y$ is either $\bbb{A}^1$-connected or $Y$ has a $k$-point and is $\bbb{A}^1$-odd chain connected, then $\Tr_{k(X)/k(Y)} \langle Jf(\eta) \rangle$ is in the image of the pullback $\GW(k) \to \GW(k(Y)).$

\end{co}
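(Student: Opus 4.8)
The plan is to assemble the statement from the local-to-global formula of Proposition~\ref{pr:deg=sumlocal} and the Jacobian formula of Proposition~\ref{degxf=Tr<J>}, both applied at the generic point. Since $f$ is separable and $X,Y$ are connected of the same dimension $n$, the morphism $f$ is dominant with finite separable extension of function fields $k(Y)\subseteq k(X)$; in particular $f$ is generically finite and generically \'etale. Enlarging $Z$ if necessary (it stays of codimension $\geq 2$) by Proposition~\ref{nonfinite_locus_is_codim2}, I may assume in addition that $f$ is finite and flat over $U:=Y-Z$. Then $f$ satisfies Assumption~\ref{fassumption}, and the section $\deg f\in\sGW(Y)$ of Definition~\ref{df:deg(f)_in_sGW(Y)} is the one represented over $U$ by the finite--flat--oriented degree $\deg(f|_{f^{-1}(U)},\rho)$ of Definition~\ref{GSDdegreedef}; here $f|_{f^{-1}(U)}\colon f^{-1}(U)\to U$ is a finite oriented map between smooth connected $n$-dimensional $k$-schemes.

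Next I would evaluate at the generic point $\eta_Y\in U$ of $Y$. By Proposition~\ref{pr:deg=sumlocal}, $\deg f(\eta_Y)=\sum_{x\in f^{-1}(\eta_Y)}\deg_x f$ in $\GW(k(Y))$. Since $X$ is smooth and connected, hence irreducible, and $f$ is dominant, the generic fibre $f^{-1}(\eta_Y)$ consists of the single point $\eta$, with residue field $k(X)$, so $\deg f(\eta_Y)=\deg_\eta f$. As $f$ is \'etale at $\eta$, Proposition~\ref{degxf=Tr<J>} (whose proof uses only \'etaleness at the point in question, not that the point is closed) gives $\deg_\eta f=\Tr_{k(X)/k(Y)}\langle Jf(\eta)\rangle$. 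On the other hand, $\deg f(\eta_Y)=\eta_Y^*(\deg f)$ by Definition~\ref{df:degf(y)}, which is precisely the image of $\deg f\in\sGW(Y)$ under the inclusion $\sGW(Y)\subseteq\GW(k(Y))$ coming from unramifiedness of $\sGW$. Combining, $\Tr_{k(X)/k(Y)}\langle Jf(\eta)\rangle$ lies in the image of $\sGW(Y)\subseteq\GW(k(Y))$ and, that inclusion being injective, it equals $\deg f$; this settles the first assertion.

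For the ``moreover'': when $Y$ is $\bbb{A}^1$-connected, Corollary~\ref{pr:A1connected_implies_GW=GW(k)} gives that $\GW(k)\to\sGW(Y)$ is an isomorphism, so $\deg f$ is the image of a unique $b\in\GW(k)$, and hence $\Tr_{k(X)/k(Y)}\langle Jf(\eta)\rangle=\deg f$ is the image of $b$ under $\GW(k)\to\GW(k(Y))$. When instead $Y$ has a $k$-point and is $\bbb{A}^1$-odd extended chain connected, I would apply Lemma~\ref{lm:A1oddextendedchainconnected_inv_property} to $\beta=\deg f\in\sGW(Y)$ to get $b\in\GW(k)$ with $y^*\beta$ equal to the pullback of $b$ for every point $y$ with separable residue field; the relevant point is that the generic point $\eta_Y$ has residue field $k(Y)$, which is a finitely generated separable extension of $k$ because $Y/k$ is smooth, so the proof of Lemma~\ref{lm:A1oddextendedchainconnected_inv_property} applies verbatim with $y=\eta_Y$ and yields $\eta_Y^*\beta$ equal to the pullback of $b$ along $\Spec k(Y)\to\Spec k$, i.e. $\Tr_{k(X)/k(Y)}\langle Jf(\eta)\rangle$ is in the image of $\GW(k)\to\GW(k(Y))$.

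There is no single hard step: the argument is a concatenation of Propositions~\ref{nonfinite_locus_is_codim2}, \ref{pr:deg=sumlocal} and~\ref{degxf=Tr<J>} with the unramifiedness of $\sGW$. The two points needing care are (i) that Proposition~\ref{degxf=Tr<J>} may be invoked at the non-closed generic point $\eta$ — legitimate since its proof only uses \'etaleness of $f$ there, which also supplies separability of $k(X)/k(Y)$ — and (ii) that Lemma~\ref{lm:A1oddextendedchainconnected_inv_property} extends to the generic point $\eta_Y$, which it does because smoothness of $Y$ over $k$ forces $k(Y)/k$ to be finitely generated and separable.
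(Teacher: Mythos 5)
Your proof is correct and follows essentially the same route as the paper: étaleness at $\eta$ from separability, Proposition~\ref{degxf=Tr<J>} applied at the generic point, unramifiedness of $\sGW$ for the first assertion, and Corollary~\ref{pr:A1connected_implies_GW=GW(k)} resp.\ Lemma~\ref{lm:A1oddextendedchainconnected_inv_property} for the ``moreover.'' The only cosmetic difference is that you identify the value at the generic point via Proposition~\ref{pr:deg=sumlocal} and the singleton generic fibre, where the paper cites Proposition~\ref{local_deg_stable_base_change}; both amount to the same identification.
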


\begin{proof}
$f$ is \'etale at $\eta$ because $f$ is a separable morphism, so we may apply Proposition \ref{degxf=Tr<J>} to $x=\eta$. The first statement then follows from Proposition \ref{local_deg_stable_base_change} and the fact that $\sGW$ is an unramified sheaf. The second assertion follows from Corollary \ref{pr:A1connected_implies_GW=GW(k)} in the first case and Lemma \ref{lm:A1oddextendedchainconnected_inv_property} in the second case.
\end{proof}

Combining Proposition \ref{degxf=Tr<J>} with Proposition \ref{pr:deg=sumlocal} implies:

\begin{co}
Let $y$ be a regular value of $f$. Then $\deg f = \sum_{x \in f^{-1} y} \Tr_{k(x)/k(y)} \langle J(f) \rangle$ in $\GW(k(y))$.
\end{co}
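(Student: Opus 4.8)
The plan is to deduce the corollary by splicing together the two preceding results: Proposition~\ref{pr:deg=sumlocal}, which writes the degree at $y$ as the sum of local degrees over the fibre, and Proposition~\ref{degxf=Tr<J>}, which computes each local degree at an \'etale point as a trace of the rank-one form attached to the Jacobian. The only point that requires an argument is that the hypothesis ``$y$ is a regular value'' places us in a situation to which both propositions apply.

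First I would make the hypothesis explicit: $y$ being a regular value means that $f$ is \'etale at every point $x$ of the fibre $f^{-1}(y)$ (equivalently, since $X$ and $Y$ are smooth of the same dimension $n$, that $df_x\colon T_xX\to T_yY$ is an isomorphism for each such $x$). In particular $f$ is quasi-finite along $f^{-1}(y)$, and $k(y)\subseteq k(x)$ is separable for each $x$ in the fibre. Next I would produce an open $U\ni y$ over which $f$ is finite, flat, and oriented. In the settings of Assumptions~\ref{fassumption}, \ref{fassumption_nonproper}, and~\ref{fassumption_charpnonproper}, $f$ is proper over the relevant open subset of $Y$, so $f^{-1}(y)$ is finite; re-running the openness argument from the proof of Proposition~\ref{nonfinite_locus_is_codim2} localized at $y$ (a proper map that is quasi-finite along a fibre is finite over a neighbourhood of the image of that fibre) yields an open $U\ni y$ with $f\vert_{f^{-1}(U)}$ finite and flat. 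Shrinking $U$ --- harmless because the complement of the locus carrying the relative orientation $\rho$ has codimension at least $2$ --- I may assume $f\vert_{f^{-1}(U)}\colon f^{-1}(U)\to U$ is finite, flat, and equipped with $\rho$.

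With such a $U$, Proposition~\ref{pr:deg=sumlocal} gives $\deg f(y)=\sum_{x\in f^{-1}(y)}\deg_x f$ in $\GW(k(y))$, where $\deg f(y)$ is the value at $y$ of the degree --- this is what ``$\deg f$ in $\GW(k(y))$'' denotes in the statement. Since $f$ is \'etale at each $x\in f^{-1}(y)$, Proposition~\ref{degxf=Tr<J>} evaluates each summand as $\deg_x f=\Tr_{k(x)/k(y)}\langle Jf(x)\rangle$, and substituting gives
\[
\deg f(y)=\sum_{x\in f^{-1}(y)}\Tr_{k(x)/k(y)}\langle Jf(x)\rangle,
\]
which is the asserted identity (with $J(f)$ in the statement read as $Jf(x)$ in the term indexed by $x$).

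I do not anticipate a genuine obstacle: the substantive content is entirely contained in Propositions~\ref{pr:deg=sumlocal} and~\ref{degxf=Tr<J>}. The one place needing a little care is the neighbourhood step of the second paragraph, but there one is merely localizing the finite-locus reasoning already carried out in Proposition~\ref{nonfinite_locus_is_codim2}, and the argument is uniform across the three Assumption settings, so no case analysis is really needed.
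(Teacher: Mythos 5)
Your argument is exactly the paper's: the corollary is stated as an immediate consequence of combining Proposition~\ref{pr:deg=sumlocal} with Proposition~\ref{degxf=Tr<J>}, which is precisely what you do. The extra care you take in checking that a regular value puts you in the finite, flat, oriented, \'etale setting needed for both propositions is sound and only makes explicit what the paper leaves implicit.
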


\section{Stable maps and orientations}\label{section:CountsRationalCurves} In the remaining sections, we give quadratically enriched counts of rational curves on del Pezzo surfaces passing through an appropriate set of points by taking the $\bbb{A}^1$-degree of maps from moduli spaces of such curves.

\subsection{Stable maps to a surface}\label{subsection:modulispace_rational_curves_del_Pezzo}
\subsubsection{Definitions}
We set up notation for the needed moduli spaces and maps, consistent with that of \cite{KLSW-relor}, to which we refer the reader for further information and references. The following is based on \cite{Abramovich--Oort-mixed_char}.

\begin{df}
Let $T$ be a scheme and let $X$ be a smooth projective $T$-scheme of dimension~$2$. Let $D \in Pic(X).$ For any $T$ scheme $U,$ an $n$-pointed stable map to $X$ over $U$ of genus $g$ and degree $D$ is a diagram
\[
\xymatrix{
C  \ar[d]_{\varpi}\ar[r]^u & X \\
U \ar@/_/[u]_{p_i}
}
\]
where $i = 1,\ldots,n,$ satisfying the following conditions.
\begin{enumerate}
\item
The morphism $\varpi : C \to U$ is a projective flat family of curves.
\item
The geometric fibers of $\varpi$ are reduced with at most nodes as singularities.
\item
The sheaf $\varpi_* \omega_{C/U}$ is locally free of rank $g.$
\item
The morphisms $p_i$ are sections of $\varpi$ which are disjoint and land in the smooth locus of $\varpi.$
\item
For all $L \in Pic(X),$ the degree of $u^*L$ on geometric fibers of $\varpi: C \to U$ is $D \cdot L.$
\item
Let $P_i$ denote the image of $p_i.$ The line bundle $\omega_{C/U}(\sum P_i) \otimes u^*\calO(3)$ is relatively ample.
\end{enumerate}
We say that the stable map $(u: C \to X,p_1,\ldots,p_n)$ is \emph{irreducible} if all geometric fibers of $\varpi : C \to U$ are irreducible.
\end{df}
\begin{df}
Let $U,U'$ be $T$-schemes. A \emph{morphism} of stable maps
\[
\alpha: (C \to U, u: C \to X, p_1,\ldots,p_n)  \to (C' \to U', u': C' \to X, p_1',\ldots,p_n')
\]
is a commutative diagram
\[
\xymatrix{
C \ar[r]^{\alpha_C}\ar[d] & C' \ar[d] \\
U \ar[r]^{\alpha_U} & U'
}
\]
inducing an isomorphism $C \to C' \times_U U',$ such that $\alpha_C \circ p_i = p_i' \circ \alpha_U$ and $u = u' \circ \alpha_C.$
\end{df}
When it does not cause confusion, we may drop the structure morphism $C \to U$ from our notation for stable maps.
We denote by $\M_{g,n}(X,D)$ the category of $n$-pointed stable maps to $X$ of genus $g$ and degree~$D,$ and by $M_{g,n}(X,D) \subset \M_{g,n}(X,D)$ the substack of irreducible stable maps. It follows from 
\cite[Theorem 2.8, p. 90]{Abramovich--Oort-mixed_char} that $\M_{g,n}(X,D)$ is a proper Artin algebraic stack with finite stabilizers admitting a projective coarse moduli scheme. 
See also~\cite[Section 4]{deJongHeStarr2011} and \cite{KLSW-relor} for more information.
Our counts of rational curves will be the $\bbb{A}^1$-degrees of appropriate modifications of the following evaluation map.
\begin{df}
The {\em evaluation map} $\ev:\M_{g,n}(X,D)\to X^n$ is given by
\[
(u: C\to X, p_1,\ldots, p_n) \mapsto (u(p_1),\ldots, u(p_n))
\]
\end{df}

\subsubsection{The double point locus}\label{sssec:dpl}
Following~\cite[Section 5]{KLSW-relor}, we define the double point locus using ideas from \cite[Example 9.2.8]{fulton98}. 
\begin{df}\label{definition:double_point_locus}
	Given a composition of closed immersions of schemes $Z \subset W \subset X$, we define the {\em subscheme of $W$ residual to $Z$} to be the closed subscheme defined by the ideal sheaf $(I_{W} : I_{Z})$.  Recall that this is the ideal sheaf of all local sections $s$ of $\calO_{X}$ such that $s t$ lies in $I_{W}$ for all local sections $t$ of $I_{Z}$.
\end{df}
Let $V \subset \bar{M}_{g,n}(X,D)$ be a substack that is a scheme. Let
\[
\xymatrix{
C_V  \ar[d]_{\varpi}\ar[r]^{u_V} & X \\
V
}
\]
be the universal stable map over $V.$ So, $\tilde u_V = u_V \times \varpi : C_V \to X \times V$ is a map over $V.$ Consider the product
\[
\tilde u_V \times \tilde u_V : C_V \times_V C_V \to (X \times V) \times_V (X \times V)\cong X \times X \times V .
\]
The preimage $(\tilde u_V \times \tilde u_V)^{-1}(\Delta_{X \times V})$ of the diagonal $\Delta_{X \times V} \subset (X \times V) \times_V (X \times V)$ contains the diagonal $\Delta_{C_V} \subset C_V \times_V C_V$ as a closed subscheme.
\begin{df}\label{df:dpl}
The \emph{double point locus} over $V$ is the subscheme 
\[
\dpl_V \subset (\tilde u_V \times \tilde u_V)^{-1}(\Delta_{X \times V})
\]
residual to $\Delta_{C_V}.$ It comes with a canonical projection $\pi :\dpl_V  \to V$ induced by $\varpi.$
\end{df}

\subsubsection{Symmetrization}\label{sssec:sym}
Let $\mfS_n$ denote the symmetric group on the set $\{1,\ldots,n\}.$ 
The group $\mfS_n$ acts on $X^n$ by permuting the factors. Let $\mfS_n$ act on $\M_{g,n}(X,D)$ by permuting the marked points and acting trivially on the underlying curve and the morphism to $X$. That is, $\tau \in \mfS_n$ acts on $\M_{g,n}(X,D)$ by
\[
\tau(u: C \to X, p_1, \ldots, p_n) = (u: C \to X, p_{\tau^{-1}(1)}, \ldots, p_{\tau^{-1}(n)}).
\]
Observe that the evaluation map $\ev:\M_{g,n}(X,D)\to X^n$ is $\mfS_n$-equivariant with respect to the forgoing actions.

Let $X^n_0 \subset X^n$ denote the complement of the pairwise diagonals and let 
\[
\M_{g,n}(X,D)^0 = \ev^{-1}(X^n_0).
\] 
So, the restriction of the action of $\mfS_n$ on $X^n$ to $X^n_0$ is free.
Let $V \subset \M_{g,n}(X,D)^0$ be an open substack preserved by the action of $\mfS_n.$ By the equivariance of $\ev,$ the action of $\mfS_n$ on $V$ is also free. If $V$ is a scheme so that the double point locus $\dpl_V$ is defined, the action of $\mfS_n$ on $V$ lifts canonically to $\dpl_V$ in such a way that $\pi_V$ is $\mfS_n$ equivariant.

Since $X$ is projective, $X^n_0$ is quasi-projective. Assume that $V$ is a scheme, so it is quasi-projective since $\M_{g,n}(X,D)$ admits a projective coarse moduli scheme. Assume also that $\dpl_V$ is quasi-projective.  Let $V_{\mfS}, (\dpl_V)_{\mfS}$ and $\Sym^n_0 X,$ denote the quotients by the action of $\mfS_n$ of $V,\dpl_V,$ and $X^n_0$ respectively. These quotients are also quasi-projective. In the general case when $V \subset \M_{g,n}(X,D)^0$ is an open substack, we also write $V_{\mfS}$ for the quotient by the action of $\mfS_n,$ which is an algebraic stack. Since $\ev:V\to X^n_0$ and $\pi_V : \dpl_V \to V$ are $\mfS_n$-equivariant, they descend to morphisms
\[
 \ev_{\mfS}: V_{\mfS} \to \Sym^n_0 X, \qquad \qquad 
 \pi_{\mfS}:(\dpl_V)_{\mfS} \to V_{\mfS}.
\]

To see why symmetrization is relevant to the present work, specialize for a moment to the case $T = \Spec k$ for $k$ a field.
A $k$ point $p$ of $\Sym^n_0X$ corresponds to a tuple of distinct $L_i$ points $p_i$ of $X$ for $i = 1,\ldots,r,$ where $k \subset L_i$ is a separable extension and $\sum_{i = 1}^r [L_i:k] = n.$ So, $\ev_\mfS^{-1}(p)$ corresponds to rational curves on $X$ passing through $p_1,\ldots,p_r.$ Thus, symmetrization allows us to count curves passing through non-rational points. The field extensions $L_i$ are not fixed. In order to fix the field extensions, which is necessary to obtain invariant counts, we introduce twists below.

\subsubsection{Twists}\label{sssec:twists}\label{subsectionRCsection:twistsev}
Assume now that $T$ is an affine scheme. For $T' \to T$ a finite \'etale map, let $[T':T]$ denote the degree. Let $T_i \to T$ for $i = 1,\ldots,r$ be finite \'etale maps with $\sum_{i = 1}^r [T_i : T] = n$ and write 
\[
\sigma = (T_1,\ldots,T_r).
\]

The list $\sigma$ is used to define twists $\ev_\sigma$ of the evaluation map in the following manner.
Let $\widetilde T \to T$ denote the universal cover, that is, the inverse limit of all finite \'etale $T$-schemes. Let $\overline{\calP}(\sigma)$ denote the $\widetilde T$-points of $\coprod_{i=1}^r T_i$ and let $\mfS_{\overline{\calP}(\sigma)} \cong \mfS_n$ denote the symmetric group. The automorphism group $\Aut(\widetilde T/T)$ acts on $\widetilde T$ points of $T$-schemes.   
Thus $\sigma$ gives rise to a canonical homomorphism 
\[
\Gal(\sigma) :\Aut(\widetilde T /T) \to \mfS_{\overline{\calP}(\sigma)},
\]
For convenience, we fix an identification $\overline{\calP}(\sigma) \cong \{1,2,\ldots, n\}$ and thus an isomorphism $\mfS_{\overline{\calP}(\sigma)} \cong \mfS_n$.

Recall the action of $\mfS_n$ on $X^n$ and $\M_{g,n}(X,D)$ from Section~\ref{sssec:sym}. Let $V \subset \M_{g,n}(X,D)$ be an open substack preserved by the action of $\mfS_n.$ In the case that $V$ is a scheme, recall also the action of $\mfS_n$ on the double point locus $\dpl_V.$ For $Y=X^n$, or $Y =V$, or if $V$ is a scheme and $Y = \dpl_V$, we define a $1$-cocycle 
\[
z(\sigma) : \Aut(\widetilde T /T) \to \Aut (Y_{\widetilde T}), \qquad Y_{\widetilde T} = Y \times_T \widetilde T
\]
by
\begin{equation}\label{twistcocyclenew}
g \mapsto \left[\Gal(\sigma)(g) \times g : Y \times_T \widetilde T \to Y \times_T \widetilde T \right].
\end{equation}  
The $1$-cocycle $z(\sigma)$ determines a twist $Y_{\sigma}$. If $Y$ is a (quasi-)projective $T$-scheme, so is $Y_\sigma$ by Theorem~3 of~\cite{Gro60}. If $Y$ is an Artin stack, so is $Y_\sigma$ by Theorem 1.5 and Lemma 3.5 of~\cite{fortman}. Observe that if $V$ is a scheme, it is quasi-projective since $\M_{g,n}(X,D)$ admits a projective coarse moduli scheme. Since $\ev_{\widetilde T}: V_{\widetilde T} \to X_{\widetilde T}^n$ and $\pi_{\widetilde T}: \dpl_{V_{\widetilde T}} \to V_{\widetilde T}$ are equivariant for the twisted action, they descend to morphisms
\[
 \ev_{\sigma}: V_\sigma \to (X^n)_\sigma, \qquad \qquad 
 \pi_{\sigma}:(\dpl_V)_{\sigma} \to V_\sigma.
\]
We abbreviate $\dpl_\sigma = (\dpl_V)_\sigma.$

\subsubsection{Types of stable maps}\label{sssec:types}
Let $(u: C \to X, p_1, \ldots, p_n)$ be a point of $M_{0,n}(X, D)$. Even though $C$ is smooth, the image curve $u(C)$ is not smooth unless equality holds in the adjunction inequality
\[
D \cdot D + 2  \geq -K_X \cdot D. 
\] 
See~\cite{Hartshorne} Chapter IV Exercise 1.8 and Chapter V Proposition 1.5. In \cite{KLSW-relor}, we study the geometry of $M_{0,n}(X, D)$ using the singularities of the image curve. We say that $u$ is {\em unramified} if $u: C \to f(C)$ is an unramified map of relative curves. Since $C$ is smooth, this is equivalent to the induced map on cotangent bundles $T^*u: u^* T^*X \to T^* C$ being surjective.
Consider now the case that $(u: C \to X, p_1, \ldots, p_n)$ is a geometric point, so $C \simeq \bbb{P}^1_F$ where $F$ is an algebraically closed field. We say that $u$ is \emph{birational} if the map $u : C \to u(C)$ is birational. We say that $u$ has {\em only ordinary double points} if $u$ is unramified, and for each point $q \in X$, the preimage $u^{-1}(q) \subset C$ consists of at most two points, and when $u^{-1}(q)$ consists of two points $c_1,c_2,$ the subspaces $T_{c_i}u(T_{c_i}C) \subset T_qX$ are distinct for $i = 1,2.$ We say that $u$ has a {\em cusp or worse} if there is a point $c \in C$ such that $T_c u \colon T_{c}C \to T_{u(c)}X$ is the zero map. We say $u$ has a {\em tacnode or worse} if there are $c_1 \neq c_2 \in \bbb{C}$ such that $u(c_1) = u(c_2)$ and $T_{c_1}u( T_{c_1}C) = T_{c_2}u(T_{c_2}C)$. 

Following~\cite[Def.~2.4]{KLSW-relor}, let
\[
M^\bir_{0,n}(X,D) \subset M_{0,n}(X,D)
\]
be the open substack the geometric points of which are birational stable maps. Since birational stable maps have no automorphisms, $M^\bir_{0,n}(X,D)$ is a scheme.
Following~\cite[Def.~2.6]{KLSW-relor}, let 
\[
M^\unr_{0,n}(X,D) \subset M_{0,n}(X,D)
\]
be the substack representing unramified stable maps.
Let
\[
M_{0,n}^\odp(X,D) \subset M_{0,n}(X,D)
\]
represent unramified stable maps $(C \to U, u: C \to X, p_1, \ldots, p_n)$ such that for every geometric point $t \in U,$ the pullback map $u_t : C_t \to X$ has only ordinary double points. The following is a consequence of Lemmas~2.8 and~2.14 of~\cite{KLSW-relor}.
\begin{lm}\label{lm:openinc}
We have open inclusions
\[
M_{0,n}^\odp(X,D)\subset M_{0,n}^\unr(X,D) \subset M^\bir_{0,n}(X,D) \subset M_{0,n}(X,D).
\]
\end{lm}
In particular, $M_{0,n}^\odp(X,D)$ and $M_{0,n}^\unr(X,D)$ are schemes.
The scheme $M_{0,n}^\odp(X,D)$ is called the \emph{ordinary double point locus.}
Let $d:=\Deg(-D\cdot K_S).$ It is well-known that $M^\unr_{0,n}(X,D)$ and hence also $M^\odp_{0,n}(X, D)$ is either empty or a smooth scheme of dimension $n+d-1$. See, for example~\cite[Lemma~2.17]{KLSW-relor}.
Following~\cite[Def.~2.13]{KLSW-relor}, let 
\[
Z_\cusp \subset M_{0,n}^\bir(X,D), \qquad Z_\tac \subset M_{0,n}^\unr(X,D),
\]
denote the sets of stable maps with a cusp or worse and with a tacnode or worse, respectively. These are closed subsets by~\cite[Lem. 2.12]{KLSW-relor}. Following~\cite[Def. 2.42]{KLSW-relor} we make the following definition.

\begin{df}
Let $D_\cusp$ be the closure in $\bar M_{0,n}(X,D)$ of the union of codimension one integral components of $Z_\cusp.$ Let $D_\tac$ be the closure in $\bar M_{0,n}(X,D)$ of the union of codimension one integral components of $Z_\tac.$
\end{df}

Let $V \subset \M_{0,n}(X,D)$ be an open substack preserved by the action of $\mfS_n$ such that $\ev(V) \subset X_0^n,$ and let $V_\mfS$ be the quotient by $\mfS_n$ as in Section~\ref{sssec:sym}. We define $D^V_{\cusp,\mfS}$ and $D^V_{\tac,\mfS}$ to be the reduced images of $D_\cusp\cap V$ and $D_\tac\cap V$ in $V_\mfS$ respectively. We may abbreviate $D_{\cusp,\mfS}= D^V_{\cusp,\mfS}$ and $D_{\tac,\mfS} = D^V_{\tac,\mfS}.$

Since the definition of $D_\cusp$ and $D_\tac$ does not make reference to marked points, they are preserved under pullback by forgetful maps $\bar M_{0,n}(X,D) \to \bar M_{0,m}(X,D)$ for $m < n.$ Let $V \subset \bar M_{0,n}(X,D)$ be an open substack preserved by the action of $\mfS_n$, and let $V_\sigma$ be the associated twist as in Section~\ref{sssec:twists}. There is a forgetful map $\eta_\sigma: V_\sigma \to \bar M_{0,0}(X,D)$ to the untwisted moduli space of stable maps without marked points since $\mfS_n$ acts trivially on the domain curve and map. Following~\cite[Section 8]{KLSW-relor}, we define
\[
D^V_{\cusp,\sigma} = \eta_\sigma^{-1}(D_\cusp)\subset V_\sigma, \qquad \qquad D^V_{\tac,\sigma} = \eta_\sigma^{-1}(D_\tac) \subset V_\sigma.
\]
We may abbreviate $D_{\cusp,\sigma} = D^V_{\cusp,\sigma}$ and $D_{\tac,\sigma} = D^V_{\tac,\sigma}.$

\subsection{Orientations}
\subsubsection{The discriminant}\label{sssec:disc}
We will need the following terminology and notation to discuss the canonical orientation of the evaluation map of moduli space of stable maps to a Del Pezzo surface. Let $f:Y\to Z$ be a finite, flat morphism of smooth $k$-schemes. Then  $f_*\calO_Y$ is a finite locally free $\calO_Z$-module. The multiplication map on $\calO_Y$ gives the morphism of $\calO_Z$-modules $m:f_*\calO_Y\otimes_{\calO_Z}f_*\calO_Y\to f_*\calO_Y$. Since $f_*\calO_Y$ is a finite locally free $\calO_Z$-module, we have the trace map $\Tr_f:f_*\calO_Y\to \calO_Z$ defined by sending $s\in f_*\calO_Y(U)$ to the trace of the multiplication map $\times s: f_*\calO_Y(U)\to f_*\calO_Y(U)$. The trace form $$f_* \calO_Y \otimes f_* \calO_Y \to \calO_Z$$ $$ (b, b') \mapsto \Tr(bb') $$ defines a map $\tau: f_* \calO_Y \to f_* \calO_Y^{-1}$. The determinant $$\det \tau: \det  f_* \calO_Y \to \det f_* \calO_Y^{-1}$$ determines a section $\disc_f$ of $\det f_* \calO_Y^{-2}$ and a canonical isomorphism 
\begin{equation}
\label{O(disc(f))=f_*OY^2}\calO(D(\disc_f)) \cong (\det f_* \calO_Y)^{\otimes 2}.
\end{equation}
Since the trace form is non-degenerate if $f$ is \'etale, we see that the divisor of $\disc_f$ is supported on the branch locus of $f$. The associated element of $\calO_Z/(\calO_Z^*)^2$ (see Construction~\ref{sectionofsquareasfn}) has the property that at every closed point $z$ of $Z$, 
$$\disc_{f,z} = \det(\Tr(b_i b_j)_{i,j})$$ where $b_i$ runs over a basis of  $(f_* \calO_Y)_z$ as an $\calO_{Z,z}$-module.

\subsubsection{Del Pezzo surfaces}\label{sssec:delpezzo}
Let $S$ be a del Pezzo surface over a perfect field $k,$ in the sense that $S$ is a geometrically connected, smooth, projective $k$-scheme of dimension $2$ with ample anticanonical bundle $-K_S$. Let $d_S = K_S \cdot K_S$ denote the degree of $S$. Let $D \in Pic(S)$ and set 
\[
d= -K_S \cdot D, \qquad n = d-1.
\]
Let $\sigma = (L_1,\ldots,L_r)$ be an $r$-tuple of field extensions $k \subset L_i \subset \bar k$ such that $\sum_{i = 1}^r [L_i:k] = n.$ Given an open substack $V \subset \bar M_{0,n}(S,D)$ preserved by the action of $\mfS_n$, recall the twisted evaluation map $\ev_\sigma : V_\sigma \to (S^n)_\sigma$ and, when $V$ is a scheme, the double point locus $\pi_\sigma : \dpl_\sigma \to V_\sigma$ defined in Section~\ref{sssec:twists} as well as the divisors $D_{\cusp,\sigma}, D_{\tac,\sigma} \subset V_\sigma$ defined in Section~\ref{sssec:types}. 

\subsubsection{Characteristic zero}

\begin{tm}\label{thm:rel_or_char0}
Assume $k$ is a field of characteristic zero and let $(S,D)$ satisfy Hypothesis~\ref{hyp:basic}. Then, there is a closed subset $A \subset S^n$ with $\codim A \geq 2$ preserved by the action of $\mfS_n$ such that 
\[
V = \bar M_{0,n}(S,D) \setminus \ev^{-1}(A)
\]
satisfies the following.
\begin{enumerate}
\item\label{it:rel_or_char0:scheme}
$V$ is a smooth $k$-scheme.
\item \label{item:rel_or_char0:smooth} $\ev_{\sigma}: V_{\sigma} \to (S^n)_{\sigma}$ is a finite flat morphism of smooth $k$-schemes.
\item\label{item:rel_or_char0:Dsmooth}
$\pi_\sigma : \dpl_\sigma \to V_\sigma$ is a finite flat morphism of smooth $k$-schemes.
\item \label{item:rel_or_char0:detdev}
The canonical section $\det \Tev_{\sigma}:\calO_{V_{\sigma}}\to \omega_{\ev_{\sigma}}$ induces an isomorphism
\[
\det \Tev_{\sigma}:\calO_{V_{\sigma}}(D_{\cusp, \sigma})\to \omega_{\ev_{\sigma}}.
\]
\item \label{item:rel_or_char0:disc_pi}
The canonical section $\disc_{\pi_{\sigma}}:\calO_{V_{\sigma}}\to [\det(\pi_\sigma)_*\calO_{\dpl_{\sigma}}]^{\otimes -2}$ induces an isomorphism
\[
\disc_{\pi_{\sigma}}:\calO_{V_{\sigma}}(D_{\cusp,\sigma})\to [\det (\pi_{\sigma})_*\calO_{\dpl_{\sigma}}(-D_{\tac,\sigma})]^{\otimes -2}.
\]
\item \label{item:rel_or_char0:rel_or}
Letting $\calL_{\sigma}:=[\det(\pi_\sigma)_*\calO_{\dpl_{\sigma}}(-D_{\tac,\sigma})]^{-1}$, the composition
\[
\disc_{\pi_{\sigma}}\circ(\det \Tev_{\sigma})^{-1}:\omega_{\ev_{\sigma}}\to \calL_{\sigma}^{\otimes 2}
\]
is an isomorphism.
\end{enumerate}
\end{tm}
\begin{proof}
Take $V$ to be the subscheme $\bar M_{0,n}(S,D)^\good \subset \bar M_{0,n}(S,D)$ the existence of which is asserted by Theorem~4.5 of~\cite{KLSW-relor}. Then~\ref{it:rel_or_char0:scheme} is Theorem~4.5(1) of~\cite{KLSW-relor}. The projection $\pi : \dpl_V \to V$ is a finite flat morphism of smooth $k$ schemes by Corollary~5.14 of~\cite{KLSW-relor}. Claim~\ref{item:rel_or_char0:Dsmooth} follows since finiteness, flatness and smoothness are preserved under Galois twists. The remaining claims are Theorem 8.1 of~\cite{KLSW-relor}.
\end{proof}

Recall Definition~\ref{df:or_away_codim_c} concerning orientations away from codimension $c.$
\begin{df}\label{df:rel_or_char0}
Assume $k$ is a field of characteristic zero and let $(S,D)$ satisfy Hypothesis~\ref{hyp:basic}. The \textbf{double point orientation} of the evaluation map 
\[
\ev_\sigma : \bar M_{0,n}(S,D)_\sigma \to (S^n)_\sigma
\]
is the orientation away from codimension $2$ given by the open set $U = (S^n \setminus A)_\sigma \subset S^n_\sigma,$ the line bundle  $\calL_\sigma \to V_\sigma = \ev_\sigma^{-1}(U)$ and the isomorphism $\disc_{\pi_{\sigma}}\circ\det \Tev_{\sigma}^{-1} : \omega_{\ev_\sigma} \to \calL_\sigma^{\otimes 2}$ as in Theorem~\ref{thm:rel_or_char0}.
\end{df}

\begin{rmk}\label{rmk:Hyp1needed}
    The assumptions in Hypothesis~\ref{hyp:basic} are used to construct $V$ in Theorem~\ref{thm:rel_or_char0}. For example, assuming that $D$ is not an $m$-fold multiple of a $-1$-curve for $m>1$ ensures that $d=D \cdot (-K_S)$ is not very negative. Negativity of $D \cdot (-K_S)$ can produce a difference between the expected dimension and dimension of spaces of such maps. Moreover, the assumption that $D$ is not an $m$-fold multiple of a $-1$-curve also ensures that the degrees of certain associated maps do not get too small either. See \cite[Lemma 3.10]{KLSW-relor}. This is useful in ensuring that stable maps with domain curves having at least 3 components are codimension $2$. See \cite[Lemma 3.12]{KLSW-relor}. The restrictions on $d$ for low degree del Pezzo surfaces come from excluding undesirable behaviors of image curves from codimension $0$ and $1$ loci of stable maps. For example, these restrictions are used to show that the locus of image curves with a tacnode of order $\geq 2$ is codimension $2$. See \cite[Proposition 2.38]{KLSW-relor}.
\end{rmk}

\subsubsection{Positive characteristic: Orientation away from codimension 1}
We continue with the notation of Section~\ref{sssec:delpezzo}. The following is Lemma~2.27 of~\cite{KLSW-relor}.
\begin{lm} \label{ev:etale_odp} If $k$ is a perfect field, then $M^\odp_{0,n}(S, D)$ is a smooth scheme and $\ev: M^\odp_{0,n}(S, D) \to S^n$ is \'etale.
\end{lm}

The following is a special case of Corollary~3.15 of~\cite{KLSW-relor}.
\begin{tm}\label{tm:eodpd}
Under Hypothesis~\ref{hyp:basic} when $\Char k = 0$ and Hypothesis~\ref{hyp:pc} when $\Char k > 0,$ we have 
\[
\codim \ev(\bar{M}_{0,d-1}(S, D) \setminus M^{\odp}_{0,d-1}(S,D)) \ge 1.
\]
\end{tm}

Let $\dpl_{M^\odp_{0,n}(S, D)}$ denote the double point locus over $M^\odp_{0,n}(S, D)$ as in Definition~\ref{df:dpl}. The following is Lemma 5.4 of~\cite{KLSW-relor}.
\begin{lm}\label{lm:dplodp}
If $k$ is a perfect field, the projection from the double point locus 
\[
\pi: \dpl_{M^\odp_{0,n}(S, D)} \to M_{0,n}^\odp(S,D)
\]
is finite \'etale.
\end{lm}
\begin{proof}
Lemma 5.4 of~\cite{KLSW-relor} asserts that $\pi$ is \'etale. It follows from the definition that $\pi$ is proper, so $\pi$ is finite. 
\end{proof}

\begin{tm}\label{thm:rel_or_charpsf}
Suppose $k$ and $(S,D)$ satisfy Hypothesis~\ref{hyp:pc}. Then, there is a closed subset $A \subset S^n$ with $\codim A \geq 1$ preserved by the action of $\mfS_n$ such that 
\[
V = \bar M_{0,n}(S,D) \setminus \ev^{-1}(A)
\]
satisfies the following.
\begin{enumerate}
\item \label{item:rel_or_charpsf:scheme}
$V$ is a smooth $k$-scheme.
\item \label{item:rel_or_charpsf:smooth} $\ev_{\sigma}: V_{\sigma} \to (S^n)_{\sigma}$ is a finite flat morphism of smooth $k$-schemes.
\item\label{item:rel_or_charpsf:Dsmooth}
$\pi_\sigma : \dpl_\sigma \to V_\sigma$ is a finite flat morphism of smooth $k$-schemes.
\item \label{item:rel_or_charpsf:detdev}
The canonical section $\det \Tev_{\sigma}:\calO_{V_{\sigma}}\to \omega_{\ev_{\sigma}}$ is an isomorphism.
\item \label{item:rel_or_charpsf:disc_pi}
The canonical section $\disc_{\pi_{\sigma}}:\calO_{V_{\sigma}}\to [\det(\pi_\sigma)_*\calO_{\dpl_{\sigma}}]^{\otimes -2}$ is an isomorphism.
\item \label{item:rel_or_charpsf:rel_or}
Letting $\calL_{\sigma}:=[\det(\pi_\sigma)_*\calO_{\dpl_{\sigma}}]^{-1}$, the composition
\[
\disc_{\pi_{\sigma}}\circ(\det \Tev_{\sigma})^{-1}:\omega_{\ev_{\sigma}}\to(\calL_{\sigma})^{\otimes 2}
\]
is an isomorphism.
\end{enumerate}
\end{tm}

\begin{proof}
Since $M_{0,n}(S,D) \subset \bar{M}_{0,n}(S,D)$ is open, it follows from Lemma~\ref{lm:openinc} that the subset $M_{0,n}^\odp(S,D) \subset \bar{M}_{0,n}(S,D)$ is open, and thus $\bar{M}_{0,n}(S,D) \setminus M_{0,n}^\odp(S,D)$ is closed. Let
\[
A = \ev(\bar{M}_{0,n}(S,D)\setminus M_{0,n}(S,D)^{\odp}) \subset S^n.
\]
Since the evaluation map $\ev: \bar{M}_{0,n}(S,D) \to S^n$ is proper, $A$ is closed. Theorem~\ref{tm:eodpd} asserts that $A$
has positive codimension. Since
\[
V = \bar M_{0,n}(S,D) \setminus \ev^{-1}(A) \subset M_{0,n}(S,D)^{\odp},
\]
Lemma~\ref{ev:etale_odp} implies that $V$ is smooth proving~\ref{item:rel_or_charpsf:scheme}. It also follows from Lemma~\ref{ev:etale_odp} that $\ev|_V$ is \'etale and thus flat and quasi-finite. As the base-change of a proper morphism, $\ev|_V$ is proper and thus finite. It follows that the Galois twist $\ev_\sigma : V_\sigma \to (S^n)_\sigma$ is also \'etale and finite proving~\ref{item:rel_or_char0:smooth} and~\ref{item:rel_or_charpsf:detdev}. The map $\pi : \dpl_V \to V$ is finite \'etale because it is the base change of $\pi: \dpl_{M^\odp_{0,n}(S, D)} \to M_{0,n}^\odp(S,D)$, which is finite \'etale by Lemma~\ref{lm:dplodp}. This proves~\ref{item:rel_or_charpsf:Dsmooth} and~\ref{item:rel_or_charpsf:disc_pi}. Finally,~\ref{item:rel_or_charpsf:rel_or} follows from~\ref{item:rel_or_charpsf:detdev} and~\ref{item:rel_or_charpsf:disc_pi}.
\end{proof}

\begin{df}\label{df:rel_or_charpsf}
Suppose $k$ and $(S,D)$ satisfy Hypothesis~\ref{hyp:pc}. The \textbf{double point orientation} away from codimension $1$ of the evaluation map 
\[
\ev_\sigma : \bar M_{0,n}(S,D)_\sigma \to (S^n)_\sigma
\]
is the orientation away from codimension $1$ given by the open set $U = S^n \setminus A \subset S^n,$ the line bundle  $\calL_\sigma \to V_\sigma = \ev_\sigma^{-1}(U)$ and the isomorphism $\disc_{\pi_{\sigma}}\circ\det \Tev_{\sigma}^{-1} : \omega_{\ev_\sigma} \to \calL_\sigma^{\otimes 2}$ as in Theorem~\ref{thm:rel_or_charpsf}.
\end{df}

\begin{rmk}\label{rmk:useHyp2}
    Hypothesis~\ref{hyp:pc} is used for example to show that the generic birational stable map is in $M^\odp_{0,n}(S, D)$. See \cite[Proposition 2.32, Assumption 2.30]{KLSW-relor}
\end{rmk}

\begin{rmk}\label{rmk:hyp2satisfied}
    The additional condition in Hypothesis~\ref{hyp:pc} that for every effective $D' \in Pic(S)$, there is a geometric point $f$ in each irreducible component of $M^\bir_0(S, D')$ with $f$ unramified is satisfied if $k$ is characteristic $0$ or if $k$ is characteristic $>3$ and $d_S \geq 3$. This follows by \cite{Testa-irreducibility} \cite{GoettschePand} (see \cite[Lemma 2.31]{KLSW-relor}) and \cite[Theorem A.1]{KLSW-relor}, respectively.
\end{rmk}

\subsubsection{Positive characteristic: Lifting data}\label{sssec:pclift}
Assume now that $k$ has positive characteristic. Let $\Lambda$ be a complete discrete valuation ring with residue field $k$ and quotient field $K$ of characteristic $0$. 

The following is a restatement of~\cite[Lemma~9.3]{KLSW-relor}.
\begin{lm}\label{lm:LiftingDelPezzo} Let $S$ be a del Pezzo surface over a field $k$, with effective Cartier divisor $D$.
\begin{enumerate}
\item\label{lm:LiftingDelPezzo1} There is a smooth proper $\Lambda$-scheme $\pi:\LiftS\to \Spec\Lambda$ with an isomorphism $\phi:\LiftS_k\xrightarrow{\sim} S$.
\item\label{lm:LiftingDelPezzo2}  For each lifting $(\LiftS, \phi)$ of $S$ over $\Lambda$ as in \ref{lm:LiftingDelPezzo1}, letting $i:S\to \LiftS$ be the closed immersion induced by $\phi$, the restriction map $i^*:\Pic(\LiftS)\to \Pic(S)$ is an isomorphism. 
\item\label{lm:LiftingDelPezzo3} For each lifting $(\LiftS, \phi)$ of $S$ over $\Lambda$ as in \ref{lm:LiftingDelPezzo1}, $\LiftS$ is a del Pezzo surface over $\Lambda$ and the generic fiber $\LiftS_K$ is a del Pezzo surface over $K$. Moreover, we have $d_{\LiftS_K}=d_S$. 
\item\label{lm:LiftingDelPezzo4} For each lifting $(\LiftS, \phi)$ of $S$ over $\Lambda$ as in \ref{lm:LiftingDelPezzo1}, there is an effective Cartier divisor $\LiftDeg$ on $\LiftS$ with $\phi(\LiftDeg_k)=D$.  Moreover, we have $\deg_K(-K_{\LiftS_K}\cdot \LiftDeg_K)=d$.
\end{enumerate}
\end{lm}

The reduction map defines an equivalence between the category of finite \'{e}tale extensions of $\Lambda$ and the analogous category over $k$ \cite[Expos\'e IX 1.10]{sga1}. Let $\Lambda_i$ be the finite \'etale extension of $\Lambda$ corresponding to the extension $L_i$ of $k.$ So,
\[
\sum_{i = 1}^r [\Lambda_i:\Lambda] = \sum_{i = 1}^r [L_i:k] = n. 
\]
Let $\Liftsig = (\Lambda_1,\ldots,\Lambda_r)$. Let $\LiftS$ be a lifting of $S$ over $\Lambda$ as in Lemma~\ref{lm:LiftingDelPezzo}.
Given a substack $\LiftV \subset \bar M_{0,n}(\LiftS,\LiftDeg)$ preserved by the action of $\mfS_n$, let $\Liftev_\sigma : \LiftV_\sigma \to (\LiftS^n)_\sigma$ denote the twisted evaluation map and let $\Liftpi_\sigma : \Liftdpl_\sigma \to \LiftV_\sigma$ denote the twisted double point locus associated to $\Liftsig$ as defined in Section~\ref{sssec:twists}. Let $D_{\cusp,\sigma}, D_{\tac,\sigma} \subset \LiftV_\sigma$ be the divisors defined in Section~\ref{sssec:types}. 

\begin{tm}\label{thm:rel_or_pchar}
Let $k$ and $(S,D)$ satisfy Hypothesis~\ref{hyp:pc}. Then, there is a closed subset $\LiftA \subset \LiftS^n$ with $\codim \LiftA \geq 2$ preserved by the action of $\mfS_n$ such that 
\[
\LiftV = \bar M_{0,n}(\LiftS,\LiftDeg) \setminus \ev^{-1}(\LiftA)
\]
satisfies the following.
\begin{enumerate}
\item\label{item:thm:rel_or_twist_ev_charp:Vsmooth}
$\LiftV$ is a smooth $\Lambda$-scheme.
\item \label{item:thm:rel_or_twist_ev_charp:smooth} $\Liftev_{\sigma}: \LiftV_\sigma \to (\LiftS^n)_{\sigma}$ is a finite flat map of smooth $\Lambda$-schemes.

\item \label{item:thm:rel_or_twist_ev_charp:Dsmooth}
$\Liftpi_\sigma : \Liftdpl_\sigma \to \LiftV_\sigma$ is a finite flat map of smooth $\Lambda$-schemes.
\item \label{item:thm:rel_or_twist_ev_charp:detdev}
The canonical section $\det T\Liftev_{\sigma}:\calO_{\LiftV_\sigma}\to \omega_{\Liftev_{\sigma}}$ induces an isomorphism
\[
\det T\Liftev_{\sigma}:\calO_{\LiftV_\sigma}(D_{\cusp, \sigma})\to \omega_{\Liftev_{\sigma}}.
\]
\item \label{item:thm:rel_or_twist_ev_charp:disc_pi}
The canonical section $\disc_{\Liftpi_{\sigma}}:\calO_{\LiftV_\sigma}\to [\det(\Liftpi_\sigma)_*\calO_{\Liftdpl_{\sigma}}]^{\otimes -2}$ induces an isomorphism
\[
\disc_{\Liftpi_{\sigma}}:\calO_{\LiftV_\sigma}(D_{\cusp,\sigma})\to [\det (\Liftpi_{\sigma})_*\calO_{\Liftdpl_{\sigma}}(-D_{\tac,\sigma})]^{\otimes -2}.
\]
\item \label{item:thm:rel_or_twist_ev_charp:rel_or}
Letting $\LiftL_{\sigma}:=[\det(\Liftpi_\sigma)_*\calO_{\Liftdpl_{\sigma}}(-D_{\tac,\sigma})]^{-1}$, the composition
\[
\disc_{\Liftpi_{\sigma}}\circ(\det T\Liftev_{\sigma})^{-1}:\omega_{\Liftev_{\sigma}}\to \LiftL_{\sigma}^{\otimes 2}
\]
is an isomorphism.
\end{enumerate}
\end{tm}

\begin{proof}
Construction~9.6 of~\cite{KLSW-relor} gives $\LiftV$. 
Claim~\ref{item:thm:rel_or_twist_ev_charp:Vsmooth} is Proposition~9.9 of~\cite{KLSW-relor}. The finiteness of $\Liftev: \LiftV \to (\LiftS^n)$ is proved in the beginning of Section~9.3 of~\cite{KLSW-relor}. So, since the domain and range are smooth, $\Liftev$ is flat. Claim~\ref{item:thm:rel_or_twist_ev_charp:smooth} follows since smoothness, finiteness and flatness are preserved by Galois twists. The projection $\pi: \Liftdpl \to \LiftV$ is a finite flat morphism of smooth $\Lambda$-schemes by Lemma~9.11 of~\cite{KLSW-relor}. Claim~\ref{item:thm:rel_or_twist_ev_charp:Dsmooth} follows since smoothness, finiteness and flatness are preserved by Galois twists. The remaining claims are Theorem~9.15 of~\cite{KLSW-relor}.
\end{proof}

Recall Definitions~\ref{df:lifting_data} and~\ref{df:pseudo-oriented} concerning lifting data and pseudo-orientations.
\begin{lm}\label{lm:psor}
The equivalence class of the double point orientation away from codimension~$1$ of the evaluation map
$
\ev_\sigma: \bar M_{0,n}(S,D)_\sigma \to (S^n)_\sigma
$
given in Definition~\ref{df:rel_or_charpsf} is a pseudo-orientation.
\end{lm}
\begin{proof}
We construct lifting data for the double point orientation away from codimension $1$. In the notation of Definition~\ref{df:lifting_data}, we have 
\[
X = \bar M_{0,n}(S,D)_\sigma, \qquad Y = (S^n)_\sigma, \qquad f = \ev_\sigma : \bar M_{0,n}(S,D)_\sigma \to (S^n)_\sigma.
\] 
The orientation away from codimension $1$ is given by
\[
U = S^n \setminus A \subset S^n, \qquad L = \calL_\sigma \to V_\sigma = \ev_\sigma^{-1}(U), \qquad \rho = \disc_{\pi_{\sigma}}\circ\det \Tev_{\sigma}^{-1} : \omega_{\ev_\sigma} \to \calL_\sigma^{\otimes 2}
\]
as in Definition~\ref{df:rel_or_charpsf}. Using Theorem~\ref{thm:rel_or_pchar}, we claim that lifting data is given by 
\begin{gather*}
\calY = (\LiftS^n)_\sigma \to \Spec \Lambda,  \qquad \calU = (\LiftS^n \setminus \LiftA)_\sigma, \qquad  \calX = \LiftV_\sigma  = \Liftev_\sigma^{-1}(\calU), \\
\mathfrak{f} = \Liftev_\sigma : \LiftV_\sigma \to  (\LiftS^n \setminus \LiftA)_\sigma, \qquad \calL = \LiftL_\sigma \to \LiftV_\sigma, \qquad \rhol  = \disc_{\Liftpi_{\sigma}}\circ\det T\Liftev_{\sigma}^{-1}: \omega_{\Liftev_{\sigma}} \to \LiftL_{\sigma}^{\otimes 2}.
\end{gather*}
Indeed, since $\codim(\LiftA_\sigma \subset (\LiftS^n)_\sigma) \geq 2,$ it follows that 
\[
\codim (\LiftA_\sigma \cap S^n \subset S^n) \geq 1,
\]
and thus $\calU \cap S^n = \calU \cap Y$ is dense in $U$ as required for lifting data. 
Since the moduli space of stable maps respects base change, the evaluation map $\mathfrak{f} = \Liftev_\sigma : \LiftV_\sigma \to \calU$ is a lifting of 
\[
f|_{f^{-1}(\calU \cap U)}  = \ev_\sigma|_{\ev_\sigma^{-1}(\calU \cap U)} : \ev_\sigma^{-1}(\calU \cap U) \to \calU \cap U
\]
to a map of $\Lambda$ schemes. Theorem~\ref{thm:rel_or_pchar}\ref{item:thm:rel_or_twist_ev_charp:smooth} asserts that $\mathfrak{f}$ is a finite flat map of smooth $\Lambda$-schemes and thus proper, generically finite and local complete intersection as required for lifting data. Since double point locus and pushforward by the associated projection morphism respect base change, the line bundle $\LiftL_\sigma \to \LiftV_\sigma$ is a lift of $\calL_\sigma|_{\ev_\sigma^{-1}(\calU \cap U)}.$ Since the discriminant respects base change, the isomorphism  $\rhol= \disc_{\Liftpi_{\sigma}}\circ\det T\Liftev_{\sigma}^{-1}:\omega_{\Liftev_{\sigma}}\to \LiftL_{\sigma}^{\otimes 2}$ lifts the isomorphism $\rho = \disc_{\pi_{\sigma}}\circ\det \Tev_{\sigma}^{-1} : \omega_{\ev_\sigma} \to \calL_\sigma^{\otimes 2}.$ 
Thus, we have constructed the desired lifting data. 
\end{proof}
In light of Lemma~\ref{lm:psor}, we make the following definition.
\begin{df}\label{df:rel_por}
The \textbf{double point pseudo-orientation} of $\ev_\sigma: \bar M_{0,n}(S,D)_\sigma \to (S^n)_\sigma$ is given by the equivalence class of the double point orientation away from codimension $1.$
\end{df}

\subsubsection{Symmetrization in characteristic zero}
Given an open substack $V \subset \bar M_{0,n}(S,D)^0$ preserved by the action of $\mfS_n,$ recall the symmetrized evaluation map $\ev_\mfS : V_\mfS \to \Sym^n_0 S$ and double point locus $\pi_\mfS : \dpl_\mfS \to V_\mfS$ defined in Section~\ref{sssec:sym} as well as the divisors $D_{\cusp,\mfS}, D_{\tac,\mfS} \subset V_\mfS$ defined in Section~\ref{sssec:types}.
\begin{tm}\label{thm:rel_or_char0_sym}
Assume $k$ is a field of characteristic zero and let $(S,D)$ satisfy Hypothesis~\ref{hyp:basic}. Then, there is a closed subset $A \subset S^n$ containing the pairwise diagonals, preserved by the action of $\mfS_n,$ with $\codim A \geq 2,$ such that 
\[
V = \bar M_{0,n}(S,D) \setminus \ev^{-1}(A)
\]
satisfies the following.
\begin{enumerate}
\item\label{it:rel_or_char0_sym:scheme}
$V$ is a smooth $k$-scheme.
\item \label{item:rel_or_char0_sym:smooth} $\ev_{\mfS}: V_{\mfS} \to \Sym^n_0 S$ is a finite flat morphism of smooth $k$-schemes.
\item\label{item:rel_or_char0_sym:Dsmooth}
$\pi_\mfS : \dpl_\mfS \to V_\mfS$ is a finite flat morphism of smooth $k$-schemes.
\item \label{item:rel_or_char0_sym:detdev}
The canonical section $\det \Tev_{\mfS}:\calO_{V_{\mfS}}\to \omega_{\ev_{\mfS}}$ induces an isomorphism
\[
\det \Tev_{\mfS}:\calO_{V_{\mfS}}(D_{\cusp, \mfS})\to \omega_{\ev_{\mfS}}.
\]
\item \label{item:rel_or_char0_sym:disc_pi}
The canonical section $\disc_{\pi_{\mfS}}:\calO_{V_{\mfS}}\to [\det(\pi_\mfS)_*\calO_{\dpl_{\mfS}}]^{\otimes -2}$ induces an isomorphism
\[
\disc_{\pi_{\mfS}}:\calO_{V_{\mfS}}(D_{\cusp,\mfS})\to [\det (\pi_{\mfS})_*\calO_{\dpl_{\mfS}}(-D_{\tac,\mfS})]^{\otimes -2}.
\]
\item \label{item:rel_or_char0_sym:rel_or}
Letting $\calL_{\mfS}:=[\det(\pi_\mfS)_*\calO_{\dpl_{\mfS}}(-D_{\tac,\mfS})]^{-1}$, the composition
\[
\disc_{\pi_{\mfS}}\circ(\det \Tev_{\mfS})^{-1}:\omega_{\ev_{\mfS}}\to \calL_{\mfS}^{\otimes 2}
\]
is an isomorphism
\end{enumerate}
\end{tm}
\begin{proof}
Take $A$ and $V$ as in Theorem~\ref{thm:rel_or_char0} with $\sigma = (k,\ldots,k)$. Since the pairwise diagonals of $S^n$ are codimension $2,$ we may, if necessary, add them to $A$ without altering any of the assertions of Theorem~\ref{thm:rel_or_char0}. So,~\ref{it:rel_or_char0:scheme} is Theorem~\ref{thm:rel_or_char0}\ref{it:rel_or_char0:scheme}. Since finiteness, flatness and smoothness are preserved under quotients by free actions of finite groups,~\ref{item:rel_or_char0_sym:smooth} and~\ref{item:rel_or_char0_sym:Dsmooth} follow from Theorem~\ref{thm:rel_or_char0}\ref{item:rel_or_char0:smooth} and~\ref{item:rel_or_char0:Dsmooth} respectively. The relative canonical sheaf of a morphism and the discriminant of a morphism are both compatible with \'etale base change. Moreover, the divisor of a section of an invertible sheaf is detectible after \'etale base change. Consequently,~\ref{item:rel_or_char0_sym:detdev} and~\ref{item:rel_or_char0_sym:disc_pi} follow from Theorem~\ref{thm:rel_or_char0}\ref{item:rel_or_char0:detdev} and~\ref{item:rel_or_char0:disc_pi} respectively. Finally,~\ref{item:rel_or_char0_sym:rel_or} follows from~\ref{item:rel_or_char0_sym:detdev} and~\ref{item:rel_or_char0_sym:disc_pi}.
\end{proof}

Recall Definition~\ref{df:or_away_codim_c} concerning orientations away from codimension $c.$
\begin{df}\label{df:rel_or_char0_sym}
Assume $k$ is a field of characteristic zero and let $(S,D)$ satisfy Hypothesis~\ref{hyp:basic}. The \textbf{double point orientation} of the symmetrized evaluation map 
\[
\ev_\mfS : \bar M_{0,n}(S,D)^0_\mfS \to \Sym^n_0 S
\]
is the orientation away from codimension $2$ given by the open set 
\[
U = (S^n \setminus A)/\mfS_n \subset \Sym^n_0 S,
\]
the line bundle  $\calL_\mfS \to V_\mfS = \ev_\mfS^{-1}(U)$ and the isomorphism $\disc_{\pi_{\mfS}}\circ\det \Tev_{\mfS}^{-1} : \omega_{\ev_\mfS} \to \calL_\mfS^{\otimes 2}$ as in Theorem~\ref{thm:rel_or_char0_sym}.
\end{df}

\subsubsection{Symmetrization in positive characteristic: Orientation away from codimension 1}
\begin{tm}\label{thm:rel_or_charpsf_sym}
Assume $k$ and $(S,D)$ satisfy Hypothesis~\ref{hyp:pc}. Then, there is a closed subset $A \subset S^n$ containing the pairwise diagonals, preserved by the action of $\mfS_n,$ with $\codim A \geq 1,$ such that 
\[
V = \bar M_{0,n}(S,D) \setminus \ev^{-1}(A)
\]
satisfies the following.
\begin{enumerate}
\item \label{item:rel_or_charpsf_sym:scheme}
$V$ is a smooth $k$-scheme.
\item \label{item:rel_or_charpsf_sym:smooth} $\ev_{\mfS}: V_{\mfS} \to \Sym^n_0 S$ is a finite flat morphism of smooth $k$-schemes.
\item\label{item:rel_or_charpsf_sym:Dsmooth}
$\pi_\mfS : \dpl_\mfS \to V_\mfS$ is a finite flat morphism of smooth $k$-schemes.
\item \label{item:rel_or_charpsf_sym:detdev}
The canonical section $\det \Tev_{\mfS}:\calO_{V_{\mfS}}\to \omega_{\ev_{\mfS}}$ is an isomorphism.
\item \label{item:rel_or_charpsf_sym:disc_pi}
The canonical section $\disc_{\pi_{\mfS}}:\calO_{V_{\mfS}}\to [\det(\pi_\mfS)_*\calO_{\dpl_{\mfS}}]^{\otimes -2}$ is an isomorphism.
\item \label{item:rel_or_charpsf_sym:rel_or}
Letting $\calL_{\mfS}:=[\det(\pi_\mfS)_*\calO_{\dpl_{\mfS}}]^{-1}$, the composition
\[
\disc_{\pi_{\mfS}}\circ(\det \Tev_{\mfS})^{-1}:\omega_{\ev_{\mfS}}\to(\calL_{\mfS})^{\otimes 2}
\]
is an isomorphism.
\end{enumerate}
\end{tm}
\begin{proof}
Similarly to the proof of Theorem~\ref{thm:rel_or_char0_sym} from Theorem~\ref{thm:rel_or_char0}, the present theorem follows from Theorem~\ref{thm:rel_or_charpsf}.
\end{proof}

\begin{df}\label{df:rel_or_charpsf_sym}
Assume $k$ and $(S,D)$ satisfy Hypothesis~\ref{hyp:pc}. The \textbf{double point orientation} away from codimension $1$ of the symmetrized evaluation map 
\[
\ev_\mfS : \bar M_{0,n}(S,D)^0_\mfS \to \Sym^n_0 S
\]
is the orientation away from codimension $1$ given by the open set 
\[
U = (S^n \setminus A)/\mfS_n \subset \Sym^n_0 S,
\] 
the line bundle  $\calL_\mfS \to V_\mfS = \ev_\mfS^{-1}(U)$ and the isomorphism $\disc_{\pi_{\mfS}}\circ\det \Tev_{\mfS}^{-1} : \omega_{\ev_\mfS} \to \calL_\mfS^{\otimes 2}$ as in Theorem~\ref{thm:rel_or_charpsf_sym}.
\end{df}

\subsubsection{Symmetrized lifting data}
In the following, we freely use the notation introduced in Section~\ref{sssec:pclift}.
\begin{tm}\label{thm:rel_or_pchar_sym}
Let $k$ and $(S,D)$ satisfy Hypothesis~\ref{hyp:pc}. Then, there is a closed subset $\LiftA \subset \LiftS^n$ with $\codim \LiftA \geq 2$ preserved by the action of $\mfS_n$ such that 
\[
\LiftV = \bar M_{0,n}(\LiftS,\LiftDeg) \setminus \ev^{-1}(\LiftA)
\]
satisfies the following.
\begin{enumerate}
\item\label{item:thm:rel_or_twist_ev_charp:Vsmooth_sym}
$\LiftV$ is a smooth $\Lambda$-scheme.
\item \label{item:thm:rel_or_twist_ev_charp:smooth_sym} $\Liftev_{\mfS}: \LiftV_\mfS \to \Sym^n_0\LiftS$ is a finite flat map of smooth $\Lambda$-schemes.
\item \label{item:thm:rel_or_twist_ev_charp:Dsmooth_sym}
$\Liftpi_\mfS : \Liftdpl_\mfS \to \LiftV_\mfS$ is a finite flat map of smooth $\Lambda$-schemes.
\item \label{item:thm:rel_or_twist_ev_charp:detdev_sym}
The canonical section $\det T\Liftev_{\mfS}:\calO_{\LiftV_\mfS}\to \omega_{\Liftev_{\mfS}}$ induces an isomorphism
\[
\det T\Liftev_{\mfS}:\calO_{\LiftV_\mfS}(D_{\cusp, \mfS})\to \omega_{\Liftev_{\mfS}}.
\]
\item \label{item:thm:rel_or_twist_ev_charp:disc_pi_sym}
The canonical section $\disc_{\Liftpi_{\mfS}}:\calO_{\LiftV_\mfS}\to [\det(\Liftpi_\mfS)_*\calO_{\Liftdpl_{\mfS}}]^{\otimes -2}$ induces an isomorphism
\[
\disc_{\Liftpi_{\mfS}}:\calO_{\LiftV_\mfS}(D_{\cusp,\mfS})\to [\det (\Liftpi_{\mfS})_*\calO_{\Liftdpl_{\mfS}}(-D_{\tac,\mfS})]^{\otimes -2}.
\]
\item \label{item:thm:rel_or_twist_ev_charp:rel_or_sym}
Letting $\LiftL_{\mfS}:=[\det(\Liftpi_\mfS)_*\calO_{\Liftdpl_{\mfS}}(-D_{\tac,\mfS})]^{-1}$, the composition
\[
\disc_{\Liftpi_{\mfS}}\circ(\det T\Liftev_{\mfS})^{-1}:\omega_{\Liftev_{\mfS}}\to \LiftL_{\mfS}^{\otimes 2}
\]
is an isomorphism.
\end{enumerate}
\end{tm}
\begin{proof}
Similarly to the proof of Theorem~\ref{thm:rel_or_char0_sym} from Theorem~\ref{thm:rel_or_char0}, the present theorem follows from Theorem~\ref{thm:rel_or_pchar}.
\end{proof}

Recall Definitions~\ref{df:lifting_data} and~\ref{df:pseudo-oriented} concerning lifting data and pseudo-orientations.
\begin{lm}\label{lm:psor_sym}
The equivalence class of the double point orientation away from codimension~$1$ of the evaluation map
$
\ev_\mfS: \bar M_{0,n}(S,D)^0_\mfS \to \Sym^n_0 S
$
given in Definition~\ref{df:rel_or_charpsf_sym} is a pseudo-orientation.
\end{lm}
\begin{proof}
Similarly to the proof of Lemma~\ref{lm:psor} from Theorem~\ref{thm:rel_or_pchar}, the present theorem follows from Theorem~\ref{thm:rel_or_pchar_sym}.
\end{proof}

In light of Lemma~\ref{lm:psor_sym}, we make the following definition.
\begin{df}\label{df:rel_por_sym}
The \textbf{double point pseudo-orientation} of the symmetrized evaluation map
\[
\ev_\mfS: \bar M_{0,n}(S,D)^0_\mfS \to \Sym^n_0 S
\]
is given by the equivalence class of the double point orientation away from codimension $1.$
\end{df}

\section{The degree of \texorpdfstring{$\ev$}{ev}}\label{sec:degev}

In light of Example~\ref{ex:or_away_codim_2_is_pseudo-orientation}, Definitions~\ref{df:rel_or_char0} and~\ref{df:rel_por} in characteristic $0$ and positive characteristic, respectively, give a pseudo-orientation of 
\[
\ev_{\sigma}:\bar{M}_{0,n}(S,D)_{\sigma} \to S^n_{\sigma},
\] 
under Hypotheses~\ref{hyp:basic} and \ref{hyp:pc}, respectively, on $(k,S,D)$. So, Theorem~\ref{tm:A1Degree_pseudo-oriented} gives
\[
\deg(\ev_\sigma) \in \sGW(S^n_\sigma).
\]
Proposition~\ref{pr:htpyinvtsheaf_iso_Xtopi0} allows us to make the following definition.
\begin{df}\label{df:qecc}
The quadratically enriched count of rational curves on $S$ of degree $D$ associated with the extension fields $\sigma = (L_1,\ldots,L_r),$
\[
\underline{N}_{S,D,\sigma} \in \sGW(\pi_0^\Aone(S^n_\sigma)),
\]
is the invariant of $\Aone$-connected components associated to
$\deg(\ev_\sigma) \in \sGW(S^n_\sigma).$
When $S$ is $\Aone$-connected, the corresponding invariant in $GW(k)$ is denoted by $N_{S,D,\sigma}.$
\end{df}
 
Similarly, Definitions~\ref{df:rel_or_char0_sym} and~\ref{df:rel_por_sym} in characteristic zero and positive characteristic, respectively, give a pseudo-orientation of 
\[
\ev_\mfS: \bar M_{0,n}(S,D)^0_\mfS \to \Sym^n_0 S,
\]
under Hypotheses~\ref{hyp:basic} and \ref{hyp:pc}, respectively. So, Theorem~\ref{tm:A1Degree_pseudo-oriented} gives
\[
\deg(\ev_\mfS) \in \sGW(\Sym^n_0 S).
\]
Proposition~\ref{pr:htpyinvtsheaf_iso_Xtopi0} allows us to make the following definition.
\begin{df}\label{df:qecc_sym}
The symmetrized quadratically enriched count of rational curves on $S$ of degree $D$
\[
\underline{N}_{S,D}^\mfS \in \sGW(\pi_0^\Aone(\Sym^n_0S)),
\]
is the invariant of $\Aone$-connected components associated to
$\deg(\ev_\mfS) \in \sGW(\Sym^n_0S).$
\end{df}

In the remainder of this section, we compare $\underline{N}_{S,D,\sigma}$ and $\underline{N}_{S,D}^\mfS$.

Let $q : S^n_0 \to \Sym^n_0 S$ denote the quotient map and abbreviate $S^n_{\sigma,0}$ for the Galois twist of $S^n_0.$ Since the symmetric group acts trivially on $\Sym^n_0 S,$ it follows that $(\Sym^n_0 S)_\sigma \simeq \Sym^n_0 S.$ Thus, the Galois twist of $q$ gives 
\[
q_\sigma : S^n_{\sigma,0} \to \Sym_0^n S.
\]  
Similarly, the Galois twist of the quotient map $Q: \bar{M}_{0,n}(S,D)^0 \to \bar M_{0,n}(S,D)^0_\mfS$ gives
\[
Q_\sigma : \bar{M}_{0,n}(S,D)^0_\sigma \to \bar M_{0,n}(S,D)^0_\mfS
\]
There is a pullback diagram
\begin{equation}\label{cd:twistsym}
\xymatrix{\bar{M}_{0,n}(S,D)^0_{\sigma} \ar[d]^{\ev_{\sigma}} \ar[rr]^{Q_\sigma}  && \ar[d]^{\ev_{\mfS_n}}\bar{M}_{0,n}(S,D)^0_{\mfS} \\
S^n_{\sigma,0} \ar[rr]^{q_\sigma} && \Sym^n_0 S.}
\end{equation}
Let
\[
i_{\sigma,0}: S^n_{\sigma,0} \to S^n_\sigma
\] 
denote the inclusion.

\begin{pr}\label{pr:pullback_deg_sym_gives_deg_twist}
Under Hypothesis~\ref{hyp:basic} in characteristic zero and Hypothesis~\ref{hyp:pc} in positive characteristic, we have the equality
\[
q_\sigma^* \underline{N}_{S,D}^\mfS = i_{\sigma,0}^*\underline{N}_{S,D,\sigma}.
\]
\end{pr}

\begin{proof}
    The double point locus and evaluation map are compatible with pullback. It follows that the pullback under $Q_\sigma^*$ of the orientation away from codimension $1$ determined by the double point pseudo-orientation of $\ev_{\mfS}$ is equivalent to the orientation away from codimension $1$ determined by the double point pseudo-orientation of $\ev_\sigma$. The proposition then follows from Proposition~\ref{pr:deg-pseudo-or-comp-pullback}.
\end{proof}
\begin{rmk}\label{rmk:SymnSurface_not_smooth}
$\Sym^n S$ is not smooth by purity of the branch locus \cite[0BMB]{stacks-project} \cite{Zariski-purity} applied to the quotient map $S^n \to \Sym^n S$. So even though the complement of $\Sym^n_0S\subset \Sym^nS$ is codimension $2$, purity results on Witt groups do not imply that the restriction map $\sGW(\Sym^n S) \to \sGW(\Sym^n_0S)$ is an isomorphism. In fact, the sections $\underline{N}_{S,D}^\mfS \in \sGW(\Sym_0^nS)$ do not extend in general: If they did, Proposition~\ref{pr:pullback_deg_sym_gives_deg_twist} would imply that $\underline N_{S,D,\sigma}$ was independent of $\sigma.$ However, Table~\ref{tab:valuesNSDsigma} and Example~\ref{ex:trksigma} below show that $\underline N_{S,D,\sigma},$ which in the $\Aone$-connected case under consideration is the pull-back of $N_{S,D,\sigma},$ in fact depends on $\sigma.$ For example, one can consider $k = \R$ and then the signature of the trace form $\Tr_{k(\sigma)/k}$ depends on $\sigma.$
\end{rmk}

We now show that Proposition~\ref{pr:pullback_deg_sym_gives_deg_twist} computes $\underline{N}_{S,D}^\mfS$ at every rational point of $\Sym^n_0 S$ in terms of the degrees of the twisted evaluation maps $\underline{N}_{S,D,\sigma}$.

\begin{lm}\label{lm:SsigmatoSsym_Nis_on_rational_points}
    For any rational point $p_0$ of $\Sym^n_0 S $, there exists a finite \'etale $k$-algebra $\sigma$ of degree $n$ and a rational point $p_1$ of $S^n_{\sigma,0}$ such that $$q_\sigma(p_1) = p_0.$$
\end{lm}

\begin{proof}
    We may choose a geometric point $\tilde{p}_0: \Spec \overline{k} \to S^n$ of $S^n$ which has image $p_0$ under the finite \'etale map $S^n_0 \to \Sym^n_0 S$. The geometric point $\tilde{p}_0$ corresponds to an ordered tuple $(x_1,\ldots,x_n)$ of geometric points of $S$. Since $S^n_0 \to \Sym^n_0 S$ is a Galois cover with Galois group $\mathfrak{S}_n$, we may identify the fiber over $\tilde{p}_0$ with the orbit of $(x_1,\ldots,x_n)$ under $\mathfrak{S}_n$. Thus we have a well-defined homomorphism $\Gal(\tilde{p}_0): \Gal(\overline{k}/k) \to \mathfrak{S}_n$ given by
    \[
    g\tilde{p}_0  = \Gal(\tilde{p}_0)(g) (x_1,\ldots,x_n).
    \] The homomomorphism $\Gal(\tilde{p}_0)$ defines an action of $\Gal(\overline{k}/k)$ on the set $\{x_1,\ldots,x_n\}$. Under Galois theory this action corresponds to a finite \'etale $k$-algebra $k \to k(\sigma)$ of degree $n$ and a twist $S^n_{\sigma}$ as in Section~\ref{sssec:twists} such that the the image of the geometric point $(x_1,\ldots,x_n)$ in $S^n_{\sigma}$ is a $k$-point $p_1$. We have $q_{\sigma}(p_1) = p_0$ by construction. 
\end{proof}

\begin{co}\label{co:p0degevmathfrakS}
Suppose $(k,S,D)$ satisfy Hypothesis~\ref{hyp:basic} in characteristic $0$ and Hypothesis~\ref{hyp:pc} in positive characteristic. Let $p_0$ be a rational point of $\Sym^n_0 S$. Choose $\sigma$ and a rational point $p_1$ of $S^n_{\sigma}$ as in Lemma~\ref{lm:SsigmatoSsym_Nis_on_rational_points}. Then
    \[
    p_0^* \underline{N}_{S,D}^\mfS = p_1^* \underline{N}_{S,D,\sigma}.
    \] 
In particular, if $S$ is $\A^1$-connected, we have the equality
    \[
    p_0^* \underline{N}_{S,D}^\mfS = N_{S,D,\sigma}
    \] 
in $\GW(k)$.
\end{co}

\begin{proof}
The first statement follows from combining Proposition~\ref{pr:pullback_deg_sym_gives_deg_twist} and Lemma~\ref{lm:SsigmatoSsym_Nis_on_rational_points}. When $S$ is additionally supposed to be $\A^1$-connected, so is $S^n_{\sigma}$ by Proposition~\ref{A1-connectivity_res_scalars}. Thus $\deg(\ev_{\sigma})$ is in $\GW(k)$ by Definition~\ref{df:GWk-degree-cases} and $p_1^*$ is identified with the identity map.
\end{proof}

In particular, the weighted count of rational curves given by $\underline{N}_{S,D}^\mfS$ through the points corresponding to $p_0$ can be computed from $\underline{N}_{S,D,\sigma}$. In fact, for any point $p_0$ of $\Sym_0^n S$, one can compute $\underline{N}_{S,D}^\mfS$ at $p_0$ using the twisted evaluation maps: any extension of odd degree induces an injection on Grothendieck--Witt groups. Since we are working away from characteristic~$2$, it follows that the perfect closure $k(p_0) \to k(p_0)^{p^{-\infty}}$ induces an injection $\GW(k(p_0)) \to \GW(k(p_0)^{p^{-\infty}})$. Thus we may compute $p_0^* \underline{N}_{S,D}^\mfS $ after base change to $k(p_0)^{p^{-\infty}}$, where we may apply Corollary~\ref{co:p0degevmathfrakS}.

By compatibility of the degree with base change, Proposition~\ref{pr:degree_finite_commutes_basechange} and \eqref{cd:twistsym}, the local degrees of $\ev_{\mfS}$ and $\ev_{\sigma}$ are compatible as well. For example, let $(u: \bbb{P} \to S_{k(u)}, p_1,\ldots, p_n)$ be a point in the intersection of the twisted locus of parametrized curves with only ordinary double points $\bar{M}_{0,n}(S,D)^{\odp}_{\sigma}$ with $\bar{M}_{0,n}(S,D)^{0}_{\sigma}$. By Propositions~\ref{local_deg_stable_base_change} and \ref{degxfrho=degfxix*rho}, the local degree $\deg_{Q_{\sigma}(u)} \ev_{\mfS_n}$ of $ \ev_{\mfS_n}$ at $Q_{\sigma}(u)$ and the local degree of $\ev_{\sigma}$ at $u$ are related by
\[
\deg_{Q_{\sigma}(u)} \ev_{\mfS_n} \otimes_{k(\mfS(u))} k(u) = \deg_{u} \ev_{\sigma}.
\] Note that this holds both in characteristic $p$ and in characteristic $0$.

\section{Geometric formula for local degree of \texorpdfstring{$\ev$}{ev}}\label{sec:geom_formula_local_deg_ev}

Recall the discussion of the discriminant from Section~\ref{sssec:disc} and the notation of Section~\ref{sssec:delpezzo}.
In characteristic zero, let $V$ be as in Theorem~\ref{thm:rel_or_char0}, and in positive characteristic, let $V$ be as in Theorem~\ref{thm:rel_or_charpsf}. Let $\calL_{\sigma}:=[\det(\pi_\sigma)_*\calO_{\dpl_{\sigma}}(-D_\tac)]^{-1}$ and let
\[
\rho: =  \disc_{\pi_{\sigma}}\circ(\det \Tev_{\sigma})^{-1}:\omega_{\ev_{\sigma}}\overset{\sim}\longrightarrow \calL_{\sigma}^{\otimes 2}
\]
be the isomorphism given by Theorem~\ref{thm:rel_or_char0}\ref{item:rel_or_char0:rel_or} in characteristic zero and by Theorem~\ref{thm:rel_or_charpsf}\ref{item:rel_or_charpsf:rel_or} in positive characteristic.

\begin{pr}\label{pr:deg_uev=<disc>}
Let $u$ be a point of $V_\sigma \cap M^\odp_{0,n}(S, D)_\sigma$ over the field $k(u)$. Then the local degree of $\ev_{\sigma}$ at $u$ with respect to the orientation $\rho$ is computed by 
\[
\deg_u \ev = \Tr_{k(u)/k(\ev(u))} \langle \disc_{\pi_{\sigma}}  \rangle.
\]
\end{pr}

\begin{proof}
By construction, $\rho$ is given by the composition
\begin{equation*}
\xymatrix{
\omega_{\ev_{\sigma}} \ar[rr]^{(\det \Tev_{\sigma})^{-1}} && \calO(D_{\cusp,\sigma})  \ar[rr]^(.35){\disc_{\pi_{\sigma}}} && (\det(\pi_\sigma)_*\calO_{\dpl_{\sigma}}(-D_{\tac,\sigma}))^{\otimes 2}.
}
\end{equation*}
Since $D_\cusp \cap M^\odp_{0,n}(S, D)_\sigma = \emptyset = D_\tac \cap M^\odp_{0,n}(S, D)_\sigma,$ the restriction $\rho|_{V_\sigma \cap M^\odp_{0,n}(S, D)_\sigma}$ is given by the composition
\begin{equation}\label{comp_for_rel_or_ev_discprop}
\xymatrix{
\omega_{\ev_{\sigma}} \ar[rr]^(.55){(\det \Tev_{\sigma})^{-1}} && \calO  \ar[rr]^(.35){\disc_{\pi_{\sigma}}} && (\det(\pi_\sigma)_*\calO_{\dpl_{\sigma}})^{\otimes 2}.
}
\end{equation}
Recall that $\Tev_{\sigma}$ denotes the section of $\Hom(\ev_{\sigma}^* T^* (S^n)_\sigma, T^* M^{\odp}_{0,n}(S,D)_\sigma)$ given by the differential of $\ev_{\sigma}$, so the determinant $\det \Tev_{\sigma}$ is a section of $\omega_{\ev_{\sigma}}$. The first isomorphism of \eqref{comp_for_rel_or_ev_discprop} takes $\det \Tev_{\sigma}$ to the constant function $1$, and the last isomorphism takes $1$ to $\disc_{\pi_{\sigma}}$. So, by Definition~\ref{df:Jacobian},
\[
J\ev_\sigma := \rho(\det T\ev_\sigma) = \disc_{\pi_\sigma}.
\]
Since $\ev_\sigma|_{M^\odp_{0,n}(S, D)_\sigma}$ is \'etale by Proposition~\ref{ev:etale_odp}, it follows that $\deg_u \ev_{\sigma} = \Tr_{k(u)/k(\ev(u))} \langle \disc_{\pi_{\sigma}} \rangle$ by Proposition~\ref{degxf=Tr<J>}.
\end{proof}

The discriminant $\langle \disc \pi_{\sigma} \rangle$ of Proposition~\ref{pr:deg_uev=<disc>} has a concrete geometric description. Let $k(u)^s$ denote the separable closure. The point $u$ corresponds to a map $u: C \to S$ from a smooth genus $0$ curve $C \to k(u)$ together with points $p_1,\ldots, p_n$ of $C_{k(u)^s}$ permuted appropriately under the Galois action. Over $k(u)^s$, the image curve $u_{k(u)^s}(C_{k(u)^s})$ has $\delta = \frac{1}{2}D\cdot(K_S + D) + 1$ nodes permuted under the action of $\Gal(k(u)^s/k(u))$. Over $k(u)$ these nodes $p$ consist of points of $S$ with various residue fields $k(p)$. At each node, there are exactly two tangent directions of $u(C)$ defining a field extension $k(p) \subseteq k(p)[\sqrt{D(p)}].$ According to Definition~\ref{df:massnode:intro}, the $\emph{mass}$ of the node $p$ is given by the quadratic form 
\[
\mass(p) = \langle N_{k(p)/k(u)} D(p)\rangle \in GW(k(u)).
\]

\begin{pr}\label{pr:loc_deg_ev_sigma} Let $u$ be a point of $V_\sigma \cap M^\odp_{0,n}(S, D)_\sigma$ over the field $k(u),$ and let $u: C \to S$ be the corresponding map. Let $\Nodes$ denote the set of nodes of $u(C)$. Then
\[
\deg_u \ev_{\sigma} = \Tr_{k(u)/k(\ev_{\sigma}(u))} \prod_{p \in \Nodes } \mass(p) 
\]
\end{pr}

\begin{proof}
We show $\disc \pi_{\sigma}(u) = \prod_{p\in \Nodes} N_{k(p)/k(u)} D(p) $ in $k(u)^*/(k(u)^*)^2$, which is sufficient by Proposition~\ref{pr:deg_uev=<disc>}. Let $\dpl = \dpl_V \overset{\pi}{\to} V$ denote the double-point locus as in Definition~\ref{df:dpl} and let $C_V \to V$ denote the universal curve.
The subscheme $\dpl \subset C_V \times_V C_V$ is invariant under the action of $\Z/2$ switching the factors of $C_V \times_V C_V$, so it inherits an action of $\Z/2$. Let $\calN$ denote the quotient, which will be called the universal node, and let $\calN_\sigma$ denote its Galois twist. We obtain a factorization of $\pi_{\sigma},$ 
\[
\dpl_\sigma \to \calN_\sigma \to V_\sigma.
\]
Pulling back over $\Spec k(u) \to V_\sigma \cap M_{0,n}^{\odp}(S,D)$, the universal node splits as the disjoint union of the nodes $p$ of $u(C).$ Indeed, since $u$ is in the ordinary double point locus, 
\[
\calN_\sigma \otimes k(u) \cong \prod_{p\in \Nodes} \Spec k(p).
\]
The pullback of the double point locus $\calD_\sigma$ over $\Spec k(p)$ is a degree $2$ \'etale cover 
\[
\calD_{k(p)} \to \Spec k(p),
\]
whence of the form $\Spec k(p)[\sqrt{D(p)}] \to \Spec k(p)$ or $\Spec k(p) \coprod \Spec k(p) \to \Spec k(p)$ (a split node) because the characteristic of $k$ is not $2$. Since the discriminant is multiplicative over products of rings, it follows that
\[
\disc \pi_{\sigma}(u) = \prod_{p\in \Nodes} \disc (\calD_{k(p)} \to k(u)).
\] By Lemma~\ref{disc_tower_field_extensions},  \begin{align*}\disc (\calD_{k(p)} \to k(u)) & = \disc(k(p)/k(u))^2 N_{k(p)/k(u)} \disc (\calD_{k(p)} \to \Spec k(p))\\
& = N_{k(p)/k(u)} D(p).
\end{align*}
\end{proof}

We include the following well-known lemma for completeness.

\begin{lm}\label{disc_tower_field_extensions}
Let $K \subset L \subset M$ be a tower of finite degree field extensions. Then,
\[
\disc(M/K) = \disc(L/K)^{[M:L]} \norm{L}{K}(\disc(M/L)).
\]
\end{lm}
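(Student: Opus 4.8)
The plan is to reduce the multiplicativity of the discriminant in a tower to the known multiplicativity of the discriminant of a bilinear form along a composite of finite free module maps, applied to the trace forms. Recall that for a finite separable extension $E/F$ with $F$-basis $b_1,\dots,b_m$, the discriminant $\disc(E/F)$ is the class in $F^*/(F^*)^2$ of $\det(\Tr_{E/F}(b_ib_j))_{i,j}$; this is independent of the chosen basis since changing basis multiplies the Gram matrix determinant by a square. (If any of the extensions is inseparable the discriminant vanishes on both sides, so we may assume separability throughout; alternatively one works with the trace form as a possibly degenerate symmetric bilinear form and tracks its half-determinant, with the same bookkeeping.)

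First I would fix an $L$-basis $c_1,\dots,c_t$ of $M$ and a $K$-basis $a_1,\dots,a_s$ of $L$, so that $\{a_j c_i\}$ is a $K$-basis of $M$ with $st=[M:K]$. The core computation is the factorization of the trace form of $M/K$ through that of $M/L$: by transitivity of the trace, $\Tr_{M/K} = \Tr_{L/K}\circ \Tr_{M/L}$, so the Gram matrix of $\Tr_{M/K}$ on the basis $\{a_j c_i\}$ is the Gram matrix of $\Tr_{L/K}\bigl(a_{j}a_{j'}\,\Tr_{M/L}(c_ic_{i'})\bigr)$. Writing $G = (\Tr_{M/L}(c_ic_{i'}))_{i,i'}$, an $t\times t$ matrix over $L$, and applying $\Tr_{L/K}$ entrywise after multiplying by the rank-one patterns $a_ja_{j'}$, one recognizes the resulting $st\times st$ matrix as the Gram matrix over $K$ of the $L/K$-trace form applied to the $L$-module $M$ equipped with the nondegenerate $L$-bilinear form with Gram matrix $G$. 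The standard determinant identity for the transfer (Scharlau transfer / "trace form of a form") then gives
\[
\det\nolimits_K\bigl(\Tr_{M/K}(a_jc_i\cdot a_{j'}c_{i'})\bigr) \equiv \det\nolimits_K\bigl(\Tr_{L/K}(a_ja_{j'})\bigr)^{t}\cdot \norm{L}{K}\bigl(\det\nolimits_L G\bigr) \pmod{(K^*)^2},
\]
where the exponent $t=[M:L]$ accounts for the $t$ diagonal blocks coming from the $c_i$, and $\norm{L}{K}$ enters because $\det_L G\in L^*$ contributes to $\det_K$ via its norm. Identifying the three determinants with $\disc(M/K)$, $\disc(L/K)$, and $\disc(M/L)$ respectively yields the claimed identity in $K^*/(K^*)^2$.

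The main obstacle — really the only nontrivial point — is the block-determinant identity $\det_K(\text{transfer of }G) \equiv \det_K(\Tr_{L/K}(a_ja_{j'}))^{[M:L]}\cdot \norm{L}{K}(\det_L G)$. This is a purely linear-algebra statement about the discriminant of a Scharlau transfer of a bilinear form along $L/K$; it is standard (see e.g.\ \cite{lam05}, Chapter on transfer, or \cite{Knebusch}), and the cleanest derivation is to first prove it for $G$ diagonal over $L$ (where the transfer of $\langle d_1\rangle\perp\cdots\perp\langle d_t\rangle$ is $\perp_i \Tr_{L/K}\langle d_i\rangle$ and each $\disc \Tr_{L/K}\langle d_i\rangle \equiv \disc(L/K)\cdot\norm{L}{K}(d_i)$), then extend to general $G$ by diagonalizing over $L$ and noting both sides change by squares under $L$-congruence of $G$. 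Since $\disc(L/K)$ itself has order dividing $2$ in $K^*/(K^*)^2$, the exponent $[M:L]$ only matters modulo $2$, which is harmless. Assembling these pieces gives the lemma; no deep input beyond transitivity of the trace and the transfer determinant formula is needed.
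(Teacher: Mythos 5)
Your proposal is correct, but it runs along a genuinely different track than the paper's proof. You identify the $K$-bilinear trace form of $M/K$ as the Scharlau transfer along $\Tr_{L/K}$ of the $L$-bilinear trace form of $M/L$ (using transitivity of the trace), and then invoke the standard determinant formula for the transfer, $\det_K(\Tr_{L/K})_*(q)\equiv \det_K(\Tr_{L/K}\langle 1\rangle)^{\dim q}\cdot \norm{L}{K}(\det_L q) \pmod{(K^*)^2}$, which you propose to reprove by diagonalizing $q$ over $L$ and reducing to the rank-one case $\disc\bigl(\Tr_{L/K}\langle d\rangle\bigr)\equiv\disc(L/K)\cdot\norm{L}{K}(d)$. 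The paper instead proves a single exact matrix identity: with $A$, $B$, $C$ the Gram matrices of the trace forms of $L/K$, $M/L$, $M/K$ in product bases, it factors $C = D\circ(\id_{[M:L]}\otimes A)$, where $D$ is the $K$-matrix obtained from the $L$-matrix $B$ by restriction of scalars, and then uses $\det(D)=\norm{L}{K}(\det B)$ (citing Kov\'acs--Silver--Williams). The two arguments are cousins (your transfer formula is essentially what the paper's factorization proves in this special case), but the decompositions differ: your route buys conceptual packaging via the transfer and standard references in \cite{lam05}, at the cost of a diagonalization step that needs characteristic $\neq 2$ (or separability) and an argument mod squares only; the paper's route is an exact identity of determinants valid in any characteristic, with no diagonalization and no separability caveat, which matches the unconditional statement of the lemma. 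Your parenthetical handling of the inseparable case (both sides degenerate) papers over this adequately, so there is no gap, just a trade-off in generality and in what external input is invoked.
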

\begin{proof}
Let $\{x_i\}_{i \in S}$ be a basis for $L$ over $K$ and let $\{y_j\}_{j \in T}$ be a basis for $M$ over $L$. Define matrices $A,B,$ by
\[
A_{i}^j := \Tr_{L/K}(x_ix_j), \qquad \qquad  B_{i}^j: =\Tr_{M/L}(y_iy_j).
\]
Observe that $\{x_iy_j\}_{i \in S, j \in T}$ is a basis for $M$ over $K.$ Define a matrix $C$ by
\[
C_{ij}^{kl} := \Tr_{M/K}(x_iy_j x_k y_l), \qquad i,k \in S, \quad j,l \in T.
\]
So, we have
\begin{gather*}
\disc(M/K) = [\det(C)] \in K^*/(K^*)^2 , \qquad \disc(M/L) = [\det(B)] \in L^*/(L^*)^2, \\
  \disc(L/K) = [\det(A)] \in K^*/(K^*)^2.
\end{gather*}
Calculate
\[
C_{ij}^{kl} = \Tr_{L/K}(x_ix_k \Tr_{M/L}(y_jy_l)) = \Tr_{L/K}(x_ix_k B_j^l).
\]
Write $B_{j}^{l} x_i = \sum_{m \in S} D_{ij}^{ml}x_m$ for $D_{ij}^{ml} \in K.$ Then
\[
\Tr_{L/K}(x_ix_k B_j^l) = \sum_{m \in S} D_{ij}^{ml} \Tr_{L/K}(x_mx_k) = \sum_{m \in S} D_{ij}^{ml} A_m^k.
\]
Writing the result of the preceding calculation in terms of matrix multiplication, we have
\[
C = D \circ (\id_{[M:L]} \otimes A).
\]
Taking determinants, we obtain
\[
\det(C) = \det(A)^{[M:L]}\det(D) = \det(A)^{[M:L]} N_{L/K}(\det(B)).
\]
A reference for the last equality is~\cite{Kovacs_Silver_Williams}. The lemma follows.
\end{proof}

\section{Enumerative interpretation}\label{sec:enumerative interpretation}

We have defined the degree of twisted evaluation maps on moduli stacks of rational stable maps to del Pezzo surfaces. Over a dense subset of the target, the degree is a sum of local degrees. We gave an interpretation of the local degree at points of the moduli stack corresponding to unramified rational curves whose image has only ordinary double points. Combining, we now give enumerative interpretations of Theorems~\ref{the:intro:deg} and~\ref{the:intro:degpc}.

We continue using the notation of Section~\ref{sssec:delpezzo}. In particular, $S$ is a del Pezzo surface over a perfect field $k,$ $D \in \Pic(S),$ and we
fix a list $\sigma = (L_1, L_2, \ldots, L_r)$ of field extensions $k \subseteq L_i \subseteq \kbar$ such that $\sum_{i=1}^r [L_i: k] = n:=\deg(-D\cdot K_S) -1$.
We include the following well-known lemma, which allows us to view $\ev_{\sigma}$ as a map with codomain $\prod_{i=1}^r \Res_{L_i/k} S$, for completeness. 
\begin{lm} \label{lm:restriction_scalars_is_twist}\label{lm:Srationalkinfinitehaspointsgp}
There is a natural isomorphism 
\begin{equation*}
\phi_\sigma:(S^n)_\sigma \cong \prod_{i=1}^r \Res_{L_i/k} S
\end{equation*} If moreover, $S$ is $k$-rational and $k$ is an infinite field, then any non-empty open subset of $(S^n)_{\sigma}$ contains a rational point.
\end{lm}
The proof is deferred to the end of the section.
 
\begin{df}\label{df:general_position_relative_sigma}
   \label{rmk:generally_chosen_points} 
We say points $p_1,\ldots,p_j \in S$ are a \textbf{$\sigma$-tuple} if there is a field extension $E$ of $k$ such that 
\[
\prod_{i = 1}^{r}L_i \otimes_k E \cong \prod_{i=1}^j k(p_i).
\]
The \textbf{corresponding point} $p_* \in \prod_{i=1}^r \Res_{L_i/k} S$ is the one to which $(p_1,\ldots,p_j)$  maps under the bijection
\[
\prod_{i = 1}^r S(L_i \otimes_k E) \simeq \left(\prod_{i=1}^r \Res_{L_i/k} S\right)(E).
\]
We say a statement holds for $\sigma$-tuples $p_1,\ldots,p_j$ in \textbf{general position} if there exists a dense open subset $U  \subset \prod_{i=1}^r \Res_{L_i/k} S$ independent of $E$ such that the statement holds whenever the corresponding point $p_*$ belongs to $U.$
\end{df}

\begin{rmk}
\label{rmk:rational_points_res} 
    Let $p_i \in S(L_i)$ for $i = 1,\ldots,r$. Then $p_1,\ldots,p_r$ are a $\sigma$-tuple with $E = k.$ 
    A statement holding for such $\sigma$-tuples in general position is the same as the statement holding for $p_1,\ldots,p_r$ in general position as in Section~\ref{ssec:enummean}.
\end{rmk}

Recall the quadratically enriched count of rational curves $\underline N_{S,D,\sigma}$ from Definition~\ref{df:qecc}.

\begin{tm}\label{thmintro:sGWRes_curve_count}
Under Hypothesis~\ref{hyp:basic} when $\Char k = 0$ and Hypothesis~\ref{hyp:pc} when $\Char k > 0,$ for $\sigma$-tuples of points $p_1,\ldots,p_j \in S$ in general position corresponding to $p_* \in \prod_{i=1}^r \Res_{L_i/k} S,$ we have the equality in $\GW(k(p_*)),$ 
\[
\underline{N}_{S,D, \sigma}(p_*) =  \sum_{\substack{u \text{ rational curve} \\ \text{ on } S\\  \text{ in class } D\\\text{ through the points} \\ p_1, \ldots, p_j}}  \Tr_{k(u)/k(p_*)} \prod_{p \text{ node}\text{ of }u(\bbb{P}^1)}\operatorname{mass}(p).
\]
Moreover, if $S$ is also $k$-rational and $k$ is an infinite field, there exists such $p_1,\ldots,p_j$ with $j=r$ and $k(p_i) = L_i$ for $i=1,\ldots,r$.
\end{tm}

\begin{proof}
By Theorems~\ref{tm:eodpd},\ref{thm:rel_or_char0} and~\ref{thm:rel_or_charpsf}, there is a dense open subset $U\subset \prod_{i=1}^r \Res_{L_i/k} S$ such that for all $p_* \in U,$ we have $\ev_{\sigma}^{-1}(p_*) \subset V_\sigma \cap M_{0,n}(S,D)^{\odp}_{\sigma}$. 
By Proposition~\ref{pr:deg=sumlocal} and Definition~\ref{df:rel_or_char0} when $\Char k = 0$, or Definitions~\ref{df:rel_or_charpsf} and~\ref{df:rel_por} when $\Char k > 0,$ we have 
\[
\underline{N}_{S,D, \sigma}(p_*) = \sum_{u \in \ev_{\sigma}^{-1} (p_*)} \deg_u \ev_{\sigma}, \qquad p_* \in U.
\]
By construction, $\ev_{\sigma}^{-1} (p_*)$ is the set of rational curves $u$ on $S$ in class $D$ passing through the points $p_1,\ldots,p_j$. So, by Proposition~\ref{pr:loc_deg_ev_sigma}, the local degree is given by 
\[
\deg_u \ev_{\sigma} = \Tr_{k(u)/k(p_*)} \prod_{p \in \Nodes } \mass(p),
\]
as claimed. 

Suppose that $S$ is also $k$-rational and $k$ is an infinite field. Then by Lemma~\ref{lm:restriction_scalars_is_twist}, $U$ contains a rational point $p$, completing the proof.
\end{proof}

\begin{co}\label{cor:intro:countA1DelPezzo}
In the notation of Theorem~\ref{thmintro:sGWRes_curve_count}, suppose that $S$ is additionally $\A^1$-connected. There is $N_{S,D, \sigma}$ in $\GW(k)$ such that 
for $\sigma$-tuples of points $p_1,\ldots,p_j \in S$ in general position corresponding to $p_* \in \prod_{i=1}^r \Res_{L_i/k} S,$ we have the equality in $\GW(k(p_*)),$ 
\[
N_{S,D, \sigma}\otimes_k k(p_*)=  \sum_{\substack{u \text{ rational curve} \\ \text{ on } S\\  \text{ in class } D\\\text{ through the points} \\ p_1, \ldots, p_j}}  \Tr_{k(u)/k(p_*)}\prod_{p \text{ node}\text{ of }u(\bbb{P}^1)}\operatorname{mass}(p)
\] 
where $N_{S,D, \sigma} \otimes_k k(p_*)$ denotes the image of $N_{S,D, \sigma}$ under the pullback map $\GW(k) \to \GW(k(p_*))$. Moreover, if $S$ is also $k$-rational and $k$ is an infinite field, we may take $p_1,\ldots,p_j$ such that $k(p_*)=k$.
\end{co}

\begin{proof}
 When $S$ is $\A^1$-connected, the twisted product $\prod_{i=1}^r \Res_{L_i/k} S$ is as well by Proposition~\ref{A1-connectivity_res_scalars}. By Corollary~\ref{pr:A1connected_implies_GW=GW(k)}, the section $\underline{N}_{S,D, \sigma}$ of Definition~\ref{df:qecc} is pulled back from a unique element $N_{S,D, \sigma}$ in $\GW(k)$, which has the claimed property by Theorem~\ref{thmintro:sGWRes_curve_count}.

\end{proof}

\begin{rmk}
By construction, the invariants $\deg (\ev_{\sigma})$ in $\GW(k)$ only depend on the list of field extensions $\sigma=(L_1,\ldots,L_r)$. Thus the count of the degree $D$ rational curves through generally chosen points with residue fields $L_1,\ldots,L_r$ is independent of the chosen points. This strengthens \cite[Example 3.9]{Levine-Welschinger} where this statement is proven for $[L_i:k] \leq 2$.
\end{rmk}

\begin{proof}[Proof of Lemma \ref{lm:restriction_scalars_is_twist}] 

By the universal property of the restriction of scalars, it suffices to define for each finitely generated $k$-algebra $A$, a natural isomorphism
\[
\phi_A: \prod_{i=1}^rS(L_i\otimes_kA)\to (S^n)_\sigma(A) .
\] Choose a finite Galois extension $E$ of $k$ with $k \subseteq L_i \subseteq E \subseteq \bar{k}$ for each $i$. Let $G= \Gal(E/k)$. It follows from the theory of descent that $(S^n)_\sigma(A)$ is naturally in bijection with the $G$-invariant subset of  $S^n(E\otimes_k A)$ for the $G$-action associated to the cocycle of Section~\ref{sssec:twists}. Recall that in Section~\ref{sssec:twists}, we identify the set of ring homomorphisms $\prod_i L_i \to \bar{k}$ with the set $\{1,\ldots,n\}$. The $G$-action on the set of such ring homomorphisms then defines 
\[
\Gal(\sigma): G \to \mathfrak{S}_n
\] where $\mathfrak{S}_n$ denotes the symmetric group.

Since $S$ is defined over $k$, $G$ acts on $S(E \otimes_k A)$ by sending $x : \Spec E \otimes_k A \to S$ to $gx$ defined
\[
\Spec E \otimes_k A \xrightarrow{g^{-1} \otimes 1}\Spec E \otimes_k A \xrightarrow{x} S.
\] Let $x_i$ be elements of $S(E \otimes_k A)$ for $i=1,\ldots,n$. The $G$-action on $S^n(E\otimes_k A)$ is given
\[
g(x_1,\ldots, x_n) = (g x_{\Gal(\sigma)(g)^{-1}(1)},\ldots,g x_{\Gal(\sigma)(g)^{-1}(n)}).
\]

Note that any ring homomorphism $\prod_{i=1}^r L_{i} \to \bar{k}$ factors through a unique $L_i$. Since we have identified the set of such ring homomorphisms with $\{1,\ldots,n\}$, we may define a map
\[
e: \{1,\ldots,n\} \to \{1,\ldots,r\}.
\] associating to each $\prod_{i=1}^r L_{i} \to \bar{k}$ this unique $L_{i}$. Suppose $y_i \in S(L_i \otimes_k A)$ for $i=1,\ldots,r$. For each $j \in e^{-1}(i)$, there is an associated map $$ \prod_{i=1}^r L_i \to L_i \xrightarrow{f_j} E \to \bar{k},$$ because $E$ is normal. Define $(x_1,\ldots,x_n)$ in $S^n(E \otimes_k A)$ by $x_i = (f_i \otimes 1_A)^* y_{e(i)}$. For any $g \in G$, we have by definition
\[
g f_i = f_{\Gal(\sigma)(g)(i)}.
\] Thus 
\[
g^{-1} x_{\Gal(\sigma)(g)(i)} = g^{-1}(f_{\Gal(\sigma)(g)(i)} \otimes 1_A)^* y_{e({\Gal(\sigma)(g)(i)})} = (f_i \otimes 1_A)^* y_{e(i)} = x_i
\] where the second equality follows because $e({\Gal(\sigma)(g)(i)}) = i$ by construction. Thus $(x_1,\ldots,x_n)$ is $G$-invariant, and we may define $\phi_A (y_1,\ldots,y_r)$ to be the point of $(S^n)_{\sigma} (A)$ corresponding to $(x_1,\ldots,x_n)$.

Since $(f_i \otimes 1_A)^* y_{e(i)}$ determines $y_{e_i}$, $\phi_A$ is injective. To show surjectivity, let $(x_1,\ldots,x_n)$ be a $G$-invariant element of $S^n(E \otimes_k A)$. For each $i=1,\ldots,r$, let $j(i)$ be the element of $\{1,\ldots,n\}$ corresponding to the ring homomorphism
\[
\prod_{i=1}^r L_i \to L_i \to E \to \overline{k}
\] where the map $L_i \to E$ corresponds to the original inclusion $L_i \subseteq E$. Then $
\phi_A(x_{j(1)},\ldots,x_{j(r)})$ gives the point of $(S^n)_{\sigma}(A)$ corresponding to $(x_1,\ldots,x_n)$ by unwinding definitions and $\phi_A$ is surjective as desired.

When $S$ is $k$-rational and $k$ is infinite, any non-empty open subset of $\Res_{L_i/k} S$ contains a rational point (see for example \cite[Proposition 2.5]{BrugalleWickelgren-AB}) completing the proof. 
\end{proof}

\section{Examples}\label{section:examples}

\subsection{\texorpdfstring{$N_{S,-K_S,\sigma}$}{Invariant of anti-canonical class} for \texorpdfstring{$d_S \geq 3$}{del Pezzo degree greater than three}}

Let $S$ be a del Pezzo surface over a perfect field $k$ of degree $d_S \geq 3$. We compute the enriched counts of rational curves on $S$ in anticanonical degree $D= -K_S$. Let $n=-K_S \cdot D -1 = d_S-1$.  Let $\sigma = (L_1,\ldots, L_r)$ be an $r$-tuple of field extensions with $\sum_{i=1}^r [L_i:k]=n$. Let $k(\sigma)$ denote the corresponding product of fields \begin{equation*}
k(\sigma) = \prod_{i=1}^r L_i,
\end{equation*} which gives a finite \'etale extension $k \to k(\sigma)$. Recall that $\chi^{\A^1}(S)$ denotes the $\A^1$-Euler characteristic of $S$. The main result of the present section is the following proposition, the proof of which is given below.
\begin{pr}\label{pr:NS-KSsigma}\label{ex:NS-KSsigma}
Let $S$,$k$, and $k(\sigma)$ be as above with $S$ additionally assumed $\A^1$-connected. Then in $\GW(k)$ we have the equality

\[
N_{S,-K_S,\sigma} =   \langle -1 \rangle\chi^{\A^1}(S)  + \langle 1 \rangle +  \Tr_{k(\sigma)/k} \langle 1 \rangle.
\] 
\end{pr}

Since $d_S \geq 3$, a choice of basis for $H^0(S, -K_S)$ determines an embedding $S \hookrightarrow \bbb{P}_k^{d_S}$. A point $p$ of $\Res_{k(\sigma)/k}S$ corresponds to some points $(p_1,\ldots,p_{r'})$ of $S$ such that $k(p) \otimes_k k(\sigma) \cong \prod_{i=1}^{r'} k(p_i)$ by the definition of the restriction of scalars. For a general point of   $\Res_{k(\sigma)/k}S$, the linear conditions on an element of $H^0(\bbb{P}^{d_S}, \calO(1))$ corresponding to the element vanishing at $p_i$ for $i=1,\ldots,r'$ are independent. Choose such a $p=(p_1,\ldots,p_{r'})$ in $\Res_{k(\sigma)/k}S$.

Thus $$\{ f \in H^0(\bbb{P}^{d_S}, \calO(1)): f(p_i) = 0 \text{ for }i=1,\ldots r'\}$$ is a $2$-dimensional vector space over $k(p)$. Choose a basis $\{f,g\}$ and let
\[
X   = \{ [s,t] \times x : tf(x) + sg(x) = 0  \} \subset \bbb{P}^1 \times S
\] be the corresponding pencil over $k(p)$. The base locus $B=\{ f = g = 0\} \hookrightarrow S$ of the pencil has degree $d_S$ by B\'ezout's theorem. By construction, the points $p_i$ lie in $B$, whence $B = \{ p_1,\ldots,p_{r'}\} \cup \{p_0\}$ where $p_0$ is a $k(p)$-rational point of $S_{k(p)}$.

Let $\pi: X \to \bbb{P}^1_{k(p)}$ denote the projection. By construction, the fibers of $\pi$ are precisely the curves in class $-K_S$ passing through $\{ p_1,\ldots,p_i\}$. (These all then also pass through $p_0$.)

Let $C \hookrightarrow S$ be a fiber of $\pi$.  By adjunction, $C$ has canonical class $K_C = K_S \otimes \calO(C)$. Since $C$ is in class $-K_S$, we have $\calO(C) \cong -K_S$, whence $K_C = \calO$ and $C$ has arithmetic genus $1$. It follows that for a general choice of point in $\Res_{k(\sigma)/k} S$, the fibers of $\pi$ are either smooth or rational with a single node. Thus
\begin{equation}\label{eq:NSDsigma_in_anticanonical_case} N_{S,D,\sigma} \otimes k(p) = \sum_{\substack{u \text{ rational curve} \\ \text{ on } S_{k(p)}\\  \text{ in class } -K_S\\\text{ through the points} \\ p_1, \ldots, p_{r'}}} \Tr_{k(u)/k(p)}\operatorname{mass}(q(u)),
\end{equation} where $q(u)$ denotes the node of $u(\bbb{P}^1)$. We will compute the right hand side directly using the $\A^1$-Euler characteristic.

The projection $X \to S$ realizes $X$ as the blow-up
\[
X \cong \Bl_B S.
\] It follows from \cite[Prop 1.4]{Levine-EC} that the $\A^1$-Euler characteristic $\chi^{\A^1}(X)$ is computed\begin{align*}\chi^{\A^1}(X) & = \chi^{\A^1}(S) + (\chi^{\A^1}(\bbb{P}^1) - \langle 1 \rangle ) \chi^{\A^1} (B )\\
& =  \chi^{\A^1}(S) + \langle -1 \rangle \chi^{\A^1} (\{ p_0,\ldots,p_i\} ) \\
& = \chi^{\A^1}(S)  + \langle -1 \rangle + \langle -1 \rangle \chi^{\A^1} (\{ p_1,\ldots,p_i\} )  \\
& = \chi^{\A^1}(S)  + \langle -1 \rangle +  \sum_{i=1}^i\langle -1 \rangle \Tr_{k(p_i)/k(p)} \langle 1 \rangle.
 \end{align*} Since $\sum_{i=1}^i\langle -1 \rangle \Tr_{k(p_i)/k(p)} \langle 1 \rangle$ is the pullback to $\GW(k(p))$ of $ \langle -1 \rangle \Tr_{k(\sigma)/k} \langle 1 \rangle$, it follows that \begin{equation}\label{chiX_from_blowupS}
\chi^{\A^1}(X) =  \chi^{\A^1}(S)  + \langle -1 \rangle +  \langle -1 \rangle \Tr_{k(\sigma)/k} \langle 1 \rangle
\end{equation} in $\GW(k(p))$.

We can also calculate $\chi^{\A^1}(X) $ using $\pi$ and the work of the second named author \cite[Section 10]{Levine-EC}. Comparing the two will compute $N_{S,-K_S,\sigma} \otimes k(p)$. An isomorphism $T \bbb{P}^1 \cong \calO(1)^{\otimes 2}$ defines a relative orientation of $\Hom(\pi^* T^* \bbb{P}^1, T^* X)$, where $T^*X$ denotes the cotangent bundle, or K\"ahler differentials. Thus there is a well-defined Euler number $e(\Hom(\pi^* T^* \bbb{P}^1, T^* X))$ in $\GW(k(p))$. The morphism $\pi$ determines a section $T^* \pi$ of the vector bundle $\Hom(\pi^* T^* \bbb{P}^1, T^* X)$ and the Euler number can be computed as a sum \[e(\Hom(\pi^* T^* \bbb{P}^1, T^* X)) = \sum_{x: T^* \pi(x) = 0} \ind_x T^* \pi.\] See \cite[Section 1]{Levine-EC} or \cite[Section 4]{cubicsurface} and \cite{bachmann_wickelgren} for compatibility checks. In the Witt group $W(k(p)):= \GW(k(p))/\bbb{Z}(\langle 1 \rangle + \langle -1 \rangle)$, we have equalities \begin{align}\label{chiXfrompencilcubicsS}
\chi^{\A^1}(X) & = e( T^* X ) = e(\Hom(\pi^* T^* \bbb{P}^1, T^* X)) = \sum_{x: T^* \pi(x) = 0} \ind_x T^* \pi,
\end{align} where the first equality is \cite[Theorem 3.1, Theorem 7.1]{Levine-EC} and the second is \cite[Theorem 9.1]{Levine-EC}. Comparison with the classical computation (where $\chi$ is multiplicative and the general fiber has Euler characteristic $0$), shows \eqref{chiXfrompencilcubicsS} is also valid in $\GW(k(p))$.

\begin{lm}\label{inddpi_S}
For general $(p_1,\ldots,p_r)$, the zeros of $T^* \pi$ are the nodes in the fibers of $\pi$, and for a node $q$, the local index $\ind_q T^* \pi$ is computed $ \ind_q T^* \pi= \Tr_{k(q)/k(p)} (\langle -1 \rangle \mass(q))$.
\end{lm}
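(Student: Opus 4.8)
```latex
\textbf{Plan of proof.}
The statement is a local computation: we need to identify the zero scheme of the section $d\pi$ of $\Hom(\pi^*T^*\bbb{P}^1, T^*X)$ and compute the local index at each zero. First I would analyze where $d\pi$ vanishes. The section $d\pi$ at a point $x\in X$ is the cotangent map $\pi^*T^*_{\pi(x)}\bbb{P}^1\to T^*_xX$; since $\pi$ has one-dimensional fibers and $X$ is a smooth surface, $d\pi$ vanishes at $x$ precisely when the fiber $\pi^{-1}(\pi(x))$ is singular at $x$. Because a general fiber $C$ has arithmetic genus $1$ and is either smooth or irreducible with a single node (as established in the paragraph preceding the lemma), and because for general $(p_1,\ldots,p_i)$ the pencil is a Lefschetz pencil (each singular fiber has exactly one node, and distinct singular fibers lie over distinct points of $\bbb{P}^1$), the zero locus of $d\pi$ is exactly the finite set of nodes $p$ of the singular fibers of $\pi$. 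I would justify the genericity (Lefschetz pencil) statement by a standard Bertini-type argument on the dual variety of $S\hookrightarrow\bbb{P}^{d_S}$, valid since $\Char k\ne 2,3$ and the points $p_i$ are general.

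Next I would compute the local index $\ind_p d\pi$ at such a node $p$. Working in the complete local ring $\widehat{\calO}_{X,p}$, choose étale coordinates $u,v$ on $X$ near $p$ and a coordinate $s$ on $\bbb{P}^1$ near $\pi(p)$. After the change of trivialization coming from the fixed isomorphism $T\bbb{P}^1\cong\calO(1)^{\otimes 2}$ (which only changes the index by a square, hence not at all in $\GW$ after the argument below), the section $d\pi$ is represented by $(\partial\pi/\partial u,\partial\pi/\partial v)$, and its local index is the $\bbb{A}^1$-degree of the map germ $(\partial\pi/\partial u,\partial\pi/\partial v):(\bbb{A}^2,p)\to(\bbb{A}^2,0)$, computed as $\Tr_{k(p)/k}\langle J\rangle$ where $J$ is the Hessian-type Jacobian $\det\bigl(\tfrac{\partial^2\pi}{\partial u_i\partial u_j}\bigr)$ evaluated at $p$. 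Since $p$ is a \emph{node} of the fiber, $\pi$ has a nondegenerate quadratic critical point there, and the tangent directions of the node are exactly the two roots of the associated quadratic form; by the definition of $\mass$, $\mathrm{Hess}(\pi)(p) = -\disc$ of that quadratic form, i.e.\ $\langle\mathrm{Hess}(\pi)(p)\rangle = \langle -1\rangle\, m(p)$ in $\GW(k(p))$. (The sign $\langle -1\rangle$ comes from the elementary fact that a binary quadratic form $ax^2+bxy+cy^2$ has Hessian determinant $-(b^2-4ac)$, together with the comparison of the intrinsically-defined mass with the extrinsic Hessian.) Applying the transfer gives $\ind_p d\pi = \Tr_{k(p)/k}\langle -1\rangle m(p)$.

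The main obstacle I anticipate is the precise bookkeeping of signs and of the relative orientation: one must check that the Hessian of $\pi$ at the node, computed in an arbitrary étale coordinate system and compared against the chosen relative orientation coming from $T\bbb{P}^1\cong\calO(1)^{\otimes 2}$, matches $\langle -1\rangle m(p)$ \emph{on the nose} as an element of $\GW(k(p))$ (not merely up to an undetermined square), and that the field of definition $k(p)$ of the node coincides with the residue field appearing in the transfer. The compatibility of the local $\bbb{A}^1$-index with $\Tr_{k(p)/k}\langle J\rangle$ is Proposition~\ref{degxf=Tr<J>} together with the Euler-number formalism of \cite{Levine-EC} cited in the text, so the genuinely new content is the identification $\langle\mathrm{Hess}(\pi)(p)\rangle = \langle -1\rangle m(p)$; this is a purely local statement about a node and can be checked after passing to $\widehat{\calO}_{S,q}$, where $u(\bbb{P})$ is cut out by an equation whose quadratic part is the binary form defining the two tangent directions. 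I would also remark that the square-ambiguity in the choice of trivialization of $L$ is harmless because $\langle a\rangle = \langle ab^2\rangle$ in $\GW$, so no separate normalization is needed beyond fixing the orientation globally.
```
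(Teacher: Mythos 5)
Your proposal is correct and takes essentially the same route as the paper: identify the zeros of $d\pi$ with the nodes of the fibers (critical points of $\pi$), compute $\ind_p d\pi$ as $\Tr_{k(p)/k}$ of the Hessian class in an orientation-compatible trivialization, and identify that class with $\langle -1\rangle m(p)$ via the discriminant of the binary quadratic tangent cone at the node. The paper's proof merely makes the intermediate bookkeeping explicit — writing $d\pi$ as $(\partial_x(G/F),\partial_y(G/F))$ in the chart $\{s\neq 0\}$, using the chain rule to replace $\Hess(G/F)(p)$ by $\Hess(tF+G)(p)$, and citing SGA7 for separability of $k(p)/k$ — while leaving implicit the final Hessian-equals-$\langle -1\rangle m(p)$ identification that you spell out.
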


\begin{proof}
Let $U\subset X$ denote the open subset of the pencil given by $U = \{s \neq 0\} \subseteq \bbb{A}^1 \times S$ and let $t$ be the coordinate on $\bbb{A}^1$. Choose $(t,q)$ in $U$, and local analytic coordinates $(x,y)$ on $S$ for the completion of $\calO_S$ at $q$. In these coordinates, $U$ is given by $$\{ t \times (x,y) : t F(x,y) + G(x,y) = 0\}.$$ The point $(t,q)$ is a zero of $T^* \pi$ if and only if $\pi^* dt (t,q)= 0$. Since $t F(x,y) + G(x,y) = 0$, we have that \begin{equation}\label{dtfromdxdy}dt F + t \partial_x F dx + t \partial_y F dy + \partial_x G dx + \partial_y G dy = 0\end{equation} at $q$. Since we assume the pencil is smooth, we can not have $F = t \partial_x F + \partial_x G =  t \partial_y F + \partial_y G = 0$. It follows that $\pi^* dt (t,q)= 0$ if and only if  $$(t \partial_x F + \partial_x G) dx + (  t \partial_y F + \partial_y G ) dy = 0,$$ which occurs if and only if $t \partial_x F + \partial_x G =  t \partial_y F + \partial_y G = 0$. This latter condition occurs if and only if $q$ is a node of $tF + G = 0$. Without loss of generality, we may assume that a zero of $T^*\pi$ is in $U$. Thus the zeros of $T^* \pi$ are the nodes in the fibers of $\pi$ as claimed. Note that we have also shown that if $q$ is a node in the fiber at $t$, then $F(q) \neq 0$.

Consider a zero $(t,q)$ in $U$ of $T^*\pi$. Consider the local trivialization of $\Hom(\pi^* T^* \bbb{P}^1, T^* X )$ corresponding to the basis $\{dt \mapsto dx, dt \mapsto  dy \}$. This local trivialization is compatible with the local coordinates and the canonical relative orientation of $\Hom(\pi^* T^* \bbb{P}^1, T^* X )$ (coming from the orientability of $\bbb{P}^1$). Using these local coordinates and local trivialization, the section $T^*\pi$ corresponds to $$\left( \frac{t \partial_x F + \partial_x G}{F} , \frac{  t \partial_y F + \partial_y G }{F} \right)$$ because $T^*\pi (dt) = (t \partial_x F + \partial_x G)/F dx + (  t \partial_y F + \partial_y G )/F dy$ by \eqref{dtfromdxdy}. Since $t = -G/F$, the section $T^*\pi$ likewise corresponds to the function $$\left( \frac{-G \partial_x F + F \partial_x G}{F^2} , \frac{  -G \partial_y F +F \partial_y G }{F^2} \right) = \left(\partial_x \frac{G}{F}, \partial_y \frac{G}{F}\right).$$ Note that the Jacobian determinant of $(\partial_x \frac{G}{F}, \partial_y \frac{G}{F})$ equals the Hessian $\Hess(G/F)$. We will show this Jacobian determinant does not vanish and $k(p) \subseteq k(q)$ is separable, which then implies that $\ind_q T^* \pi = \Tr_{k(q)/k(p)} \langle \Hess(G/F)(q) \rangle$ by \cite[Proposition 34]{cubicsurface}. 

The Hessian $\Hess(G/F)$ as a function of $x$ and $y$ equals the Hessian $\Hess(t + G/F)$ because $t$ is a fixed scalar. Moreover, since $t+ G/F$ and its partials vanish at $q$, there is an equality between $\Hess(t F + G)$ and $\Hess(t + G/F)$ evaluated at $q$ by the chain rule. By genericity, the fibers $\{(x,y): t F + G = 0 \}$ of $\pi$ have only nodes, whence $\Hess(t F + G) (q) \neq 0$. Thus $\Hess(G/F)(q)\neq 0$. Since $q$ is a node, the extension $k(p) \subseteq k(q)$ is separable \cite[Expos\'e XV, Th\'eor\`eme 1.2.6]{SGA7_10_22} and we have that $\ind_q T^* \pi = \Tr_{k(q)/k(p)} \langle \Hess(G/F)(q) \rangle$, whence $\ind_q T^* \pi = \Tr_{k(q)/k(p)} \langle \Hess(t F + G) (q)\rangle$, proving the claim.
\end{proof}

\begin{proof}[Proof of Proposition~\ref{pr:NS-KSsigma}]
Recall that a point $p$ of $\Res_{k(\sigma)/k} S$ with residue field $k(p)$ corresponds to points $(p_1,\ldots,p_j)$ of $S$ with residue fields $E_1,\ldots,E_j$ such that $k(\sigma) \otimes_k k(p) \cong \prod_{i=1}^j E_j$. The condition that a section of $H^0(\bbb{P}^{d_S}, \cO(1))$ vanish at some $p_i$ consists of $[k(p_i):k]$ linear conditions over $k$. Let $U$ be the dense open subset of $\Res_{k(\sigma)/k} S$ such that for all points $p$ of $U$ the $n$ linear conditions on $H^0(\bbb{P}^{d_S}, \cO(1))$ corresponding to the vanishing of a section at $p = (p_1,\ldots, p_j)$ consists of $n$ linearly independent conditions. Passing to a potentially smaller dense open subset $k(U')$ of $\Res_{k(\sigma)/k} S$, we may assume that the corresponding pencil $X \to \bbb{P}^1$ has only nodal singularities. Combining \eqref{chiX_from_blowupS}, \eqref{chiXfrompencilcubicsS} and Lemma~\ref{inddpi_S} we have that the proposition holds after pullback to $k(p)$ for any $p$ in $U'$. Since $S$ is $\A^1$-connected, so is $\Res_{k(\sigma)/k} S$ by Proposition~\ref{A1-connectivity_res_scalars}. Thus $\GW(k) \to \sGW(\Res_{\sigma/k} S)$ admits a splitting and is thus injective. Since $U'$ is dense in $\sGW(\Res_{k(\sigma)/k} S)$, we have that
\[
\sGW(U') \to \sGW(\Res_{k(\sigma)/k} S)
\] is likewise injective. It follows that $\GW(k) \to \sGW(U') $ is injective, so we must have the claimed equality.

\end{proof}

\begin{rmk}\label{Nanticanonical_SA1connected}
The hypothessis that $S$ is $\A^1$-connected is not necessary in this computation of $N_{S,-K_S,\sigma}$. If $S$ is $\A^1$-connected, then we have defined $N_{S,-K_S,\sigma}$ in $\GW(k)$ by Theorem \ref{the:intro:deg} or \ref{the:intro:degpc}. In particular, the statement of Proposition~\ref{pr:NS-KSsigma} is clearly well-defined. More generally, however, we have defined $\underline{N}_{S,D,\sigma}$ in $\sGW(\pi_0^{\A^1}(\Res_{k(\sigma)/k} S))$ with Theorem~\ref{the:intro:degnc} or ~\ref{the:intro:degpcnc} . We see from the proof of Proposition~\ref{pr:NS-KSsigma} that the element of $\GW(k)$ given by $  \langle -1 \rangle\chi^{\A^1}(S)  + \langle 1 \rangle +  \Tr_{k(\sigma)/k} \langle 1 \rangle$ has the property that its pullback to $\sGW(\pi_0^{\A^1}(\Res_{k(\sigma)/k} S))$ is $\underline{N}_{S,D,\sigma}$ .
\end{rmk}

\begin{rmk}
    The hypothesis that $k$ is perfect is also not necessary for the proof of Proposition~\ref{pr:NS-KSsigma}. As in the case of Remark~\ref{Nanticanonical_SA1connected}, it is necessary to assume Hypothesis~\ref{hyp:pc} in order to define $N_{S, -K_S, \sigma}$. Alternatively, however, one could use the right hand side of Theorem~\ref{the:intro:curve_count}, or more precisely the right hand side of \eqref{eq:NSDsigma_in_anticanonical_case}, to define $N_{S, -K_S, \sigma} \otimes k(p)$. Apriori, this is not well-defined because it might depend on the points $p =(p_1,\ldots,p_j)$. However, the proof of Proposition~\ref{pr:NS-KSsigma} shows that when we drop the assumption of perfectness from Hypothesis~\ref{hyp:pc}, there is nonetheless a dense open subset of $\Res_{k(\sigma)/k} S$ for which this definition is independent of the choice and is equal to $\langle -1 \rangle\chi^{\A^1}(S)  + \langle 1 \rangle +  \Tr_{k(\sigma)/k} \langle 1 \rangle.$
\end{rmk}

\begin{ex}\label{ex:SnotA1connected}
    Let $S$ be a smooth scheme over $k \subseteq \bbb{R}$. If $S$ is $\A^1$-connected, then $S(\bbb{R})$ is connected. This gives many examples of del Pezzo surfaces which are not $\A^1$-connected. For example, the cubic surface $S_{\lambda}$ in $\bbb{P}^4$ determined by the equations
    \[
    \lambda x^3+y^3+z^3+w^3+t^3 = 0 \quad \quad x+y+z+w+t = 0
    \] has $S(\bbb{R})$ disconnected for $ 1/16< \lambda <1/4$ (\cite[Example 3.1]{Polo-Blanco-Jaap}) and is therefore not $\A^1$-connected. See also \cite{Degtyarev-Kharlamov-quasi-simple}.
\end{ex}

\subsection{Enriched numbers}

We can give entirely explicit calculations of some $\A^1$ Gromov--Witten invariants. See Table~\ref{tab:valuesNSDsigma}. Powerful tropical methods have produced many more calculations \cite{PuentesPauli-Correspondence} \cite{PuentesMarkwigPauliRorhle-quad_extensions} after this work originally appeared. See also \cite{BrugalleWickelgren-AB}. We justify the calculations in the table: The $\A^1$-Euler characteristic of $\bbb{P}^2$ is $\langle 1\rangle+\langle -1\rangle+\langle 1\rangle$ by \cite[Example 1.7]{Hoyois_lef}. Combining with Proposition~\ref{ex:NS-KSsigma} gives that $N_{\bbb{P}^2, \cO(3),\sigma} = 2(\langle 1\rangle+\langle -1\rangle)+\Tr_{k(\sigma)/k}\langle 1\rangle $. One similarly computes that $\chi^{\A^1}(\bbb{P}^1 \times \bbb{P}^1) = 2(\langle 1\rangle+\langle -1\rangle)$. The $\A^1$-Euler characteristic of hypersurfaces such as the Fermat cubic surface and $xy^2+y^2z+z^2w+w^2x = 0$ can be computed explicitly using \cite{LLV-Eulerchar} and the resulting $\A^1$-Gromov--Witten invariants are given in the table. The remaining entries of Table~\ref{tab:valuesNSDsigma} are all of the following form: the classical Gromov--Witten invariant over $\mathbb{C}$ is $1$ because there is precisely $1$ curve on the surface $S$  through $-K_S \cdot D -1$ generally chosen points in these cases and, moreover, the image of this curve is smooth. It follows that the corresponding $\A^1$-Gromov--Witten invariant is $\langle 1 \rangle$. For example, the complete linear system associated to $\calO(1) \boxtimes \calO(d)$ on $\bbb{P}^1 \times \bbb{P}^1$ defines an embedding
\[
\bbb{P}^1 \times \bbb{P}^1 \to \bbb{P}^{2d+1}
\] not contained in any hypersurface. For $[\sigma:k]=-K_S \cdot (\calO(1) \boxtimes \calO(d)) -1 = 2d+1$, there is an open dense subset of $\Res_{\sigma/k}(\bbb{P}^1 \times \bbb{P}^1)$ consisting of points $p = (p_1\ldots, p_r)$ so that over the algebraic closure the $p_i$ debtermine a unique hypersurface $H$ in $\bbb{P}^{2d+1}$. The intersection of $H$ with the image of $\bbb{P}^1 \times \bbb{P}^1 $ is isomorphic to $\bbb{P}^1$ by projection on the first factor in $\bbb{P}^1 \times \bbb{P}^1 $. Since this curve is smooth, the discriminant of the double point locus is trivial. Note that $\bbb{P}^1 \times \bbb{P}^1$ and $\bbb{P}^2$ are $k$-rational, so we have rational points of $\Res_{\sigma/k} S$ in general position, at least after an extension of fields of odd degree (which may be necessary in $k$ is finite). Since odd degree field extension induce an injection on Grothendieck--Witt groups, it follows that the resulting $\A^1$-Gromov--Witten invariants are $\langle 1\rangle$ by Theorem~\ref{the:intro:curve_countnc}. The table entries $N_{\bbb{P}^2, \cO(1), \sigma}$ and $N_{\bbb{P}^2, \cO(2), \sigma}$ are  $\langle 1\rangle$ by a very similar argument.

Note that $\Tr_{k(\sigma)/k} \langle 1 \rangle = \sum_{i=1}^r \Tr_{L_i/k} \langle 1 \rangle$ is the sum of the trace forms of the field extensions $k \subseteq L_i$. The trace forms  $\Tr_{k(\sigma)/k} \langle 1 \rangle$ can be computed explicitly as well. 
\begin{ex}\label{ex:trksigma}
For $k$ of characteristic not $2$, $d \in k^*$, and $\sigma = (k,k,\ldots,k,k[\sqrt{d}])$, we have $$\Tr_{k(\sigma)/k} \langle 1 \rangle = (n-2)\langle 1 \rangle + \langle 2 \rangle + \langle 2d \rangle.$$ 
\end{ex}
More information on trace forms is available. See for example \cite{Conner-Perlis}. 

\begin{rmk}
Note that for appropriate $\sigma$ and $S$, many of the $N_{S,D,\sigma}$ in Table~\ref{tab:valuesNSDsigma} are not only sums of $\langle \pm 1\rangle$'s. For example, it is impossible to express $\langle -3 \rangle+4(\langle 1\rangle+\langle -1\rangle)+\langle 1 \rangle+\Tr_{k(\sigma)/k}\langle 1\rangle $ for $\sigma=(k,k,\ldots,k)$ in the form $a\langle 1\rangle + b \langle -1\rangle$ for $a$ and $b$ integers because the discriminant is divisible by $3$. A lot more is happening here than over $\bbb{R}$ and $\bbb{C}$!
\end{rmk}

\begin{table}
\label{tab:valuesNSDsigma}       
\begin{tabular}{llll}
\hline\noalign{\smallskip}
 $S$&$D$& $\sigma$ & $N_{S,D,\sigma}=$ count of rational curves \\
\noalign{\smallskip}\hline\noalign{\smallskip}
 $\bbb{P}^2$ & $\calO(1)$ & all $\sigma$ & $\langle 1 \rangle$  \\
$\bbb{P}^2$ & $\calO(2)$ & all $\sigma$ & $\langle 1 \rangle$  \\
$\bbb{P}^2$ & $\calO(3)$ & all $\sigma$ &  $2(\langle 1\rangle+\langle -1\rangle)+\Tr_{k(\sigma)/k}\langle 1\rangle $  \\
 $\bbb{P}^1 \times \bbb{P}^1$ & $\calO(1) \boxtimes \calO(d)$ & all $\sigma$ & $\langle 1 \rangle$  \\
  $\bbb{P}^1 \times \bbb{P}^1$ & $\calO(2) \boxtimes \calO(2)$ & all $\sigma$ & $2(\langle 1\rangle+\langle -1\rangle)+\langle 1 \rangle+\Tr_{k(\sigma)/k}\langle 1\rangle $  \\
  Fermat Cubic Surface & $\calO_{\bbb{P}^3}(1)$&  all $\sigma$ & $\langle -3 \rangle+4(\langle 1\rangle+\langle -1\rangle)+\langle 1 \rangle+\Tr_{k(\sigma)/k}\langle 1\rangle $ \\
  $x^3+y^3+z^3+w^3 = 0$& & & \\
  the cubic surface & $\calO_{\bbb{P}^3}(1)$&  all $\sigma$ & $\langle 5 \rangle+4(\langle 1\rangle+\langle -1\rangle)+\langle 1 \rangle+\Tr_{k(\sigma)/k}\langle 1\rangle $ \\
  $xy^2+y^2z+z^2w+w^2x = 0$& & & \\
  $S$&$-K_S$&all $\sigma$& $\langle -1 \rangle\chi^{\A^1}(S)  + \langle 1 \rangle +  \Tr_{k(\sigma)/k} \langle 1 \rangle$\\
\noalign{\smallskip}\hline
\end{tabular}
\newline
\caption{$\GW(k)$-enriched counts of rational curves}
\end{table}

\bibliographystyle{alpha}

\bibliography{Bibli}

\end{document}